\theoremstyle{plain}
\newtheorem{theorem}{Theorem}
\newtheorem{lemma}[theorem]{Lemma}
\newtheorem{proposition}[theorem]{Proposition}
\newtheorem{corollary}[theorem]{Corollary}
\theoremstyle{definition}
\newtheorem{definition}[theorem]{Definition}
\newtheorem{example}[theorem]{Example}
\theoremstyle{remark}
\newtheorem{remark}[theorem]{Remark}
\begin{document}
\title{Structure of the coadjoint orbits of Lie groups
\thanks{Partially supported by the Ministry of Science and
Innovation, Spain, under Project MTM2008--01386.}
}

\author{Ihor V. Mykytyuk\\
\small Institute of Applied Problems of Mathematics and Mechanics, \\
\small Naukova Str. 3b, 79601, Lviv, Ukraine, mykytyuk\_i@yahoo.com
}

\maketitle

\begin{abstract}
We study the geometrical structure of the coadjoint orbits
of an arbitrary complex or real Lie algebra
${\mathfrak g}$ containing some ideal
${\mathfrak n}$. It is shown that any coadjoint orbit in
${\mathfrak g}^*$ is a bundle with the affine subspace of
${\mathfrak g}^*$ as its fibre. This fibre is an isotropic
submanifold of the orbit and is defined only by the
coadjoint representations of the Lie algebras
${\mathfrak g}$ and ${\mathfrak n}$ on the dual space
${\mathfrak n}^*$. The use of this fact and an application
of methods of symplectic geometry give a new insight into
the structure of coadjoint orbits and allow us to generalize
results derived earlier in the case when
${\mathfrak g}$ is a split extension using the Abelian ideal
${\mathfrak n}$ (a semidirect product). As applications, a
new proof of the formula for the index of Lie algebra and
a necessary condition of integrality of a coadjoint orbit are
obtained.
\end{abstract}

\section{Introduction}

A Lie algebra is a semidirect product if it is a split
extension using its Abelian ideal. The structure of the
coadjoint orbits of a semidirect product is well understood
and known due to papers of Rawnsley~\cite{Ra},
Baguis~\cite{Ba}, Panyushev~\cite{Pa07-1,Pa07-2,Pa03}
and others~\cite{Rais,RT,TY,Ya}. According to~\cite{Ra}, the
coadjoint orbits of a semidirect product are classified by
the coadjoint orbits of so-called little-groups
(reduced-groups) which are isotropy subgroups of some
representations. In fact, the fibre bundles having these
coadjoint orbits as fibres, completely characterize the
coadjoint orbits of the semidirect product. Our paper is
devoted to a generalization of these results of Rawnsley for
arbitrary Lie algebras. While in~\cite{Ra} and~~\cite{Ba}
for calculations the exact multiplication formulas
were used, our approach in the general case is completely
different.

Let $G$ be a connected Lie group with a normal connected
subgroup $N$ and let ${\mathfrak g}$ and
${\mathfrak n}$ be their Lie algebras. Since
${\mathfrak n}$ is an ideal of
${\mathfrak g}$, the coadjoint action of $G$ on
${\mathfrak g}^*$ induces the $G$-action
$\cdot$ on ${\mathfrak n}^*$. Our considerations in the
article are based on the following two facts:
\medskip

\parbox{.85\textwidth}{
An arbitrary coadjoint orbit
${\mathcal O}$ in
${\mathfrak g}^*$ is a bundle with some affine subspace
${\mathcal A}\subset{\mathcal O}\subset{\mathfrak g}^*$ of dimension
$\dim{\mathcal A}=\dim(G\cdot\nu)-\dim(N\cdot\nu)$
as its fibre, where
$\nu=\sigma|{\mathfrak n}\in{\mathfrak n}^*$ and
$\sigma\in{\mathcal A}$. The affine subspace
${\mathcal A}$ is an isotropic submanifold of the orbit
${\mathcal O}$ with respect to the canonical
Kirillov-Kostant-Souriau symplectic structure on
${\mathcal O}$.
}\hfill(*)

\medskip

\parbox{.85\textwidth}{
The identity component of the isotropy group
$N_\nu=\{n\in N: n\cdot\nu=\nu\}$ of
$\nu$ acts transitively on the affine subspace
${\mathcal A}\subset{\mathcal O}$.
}\hfill(**) \medskip

The fact (**) is equivalent to the so-called ``Stages
Hypothesis'', which is a sufficient condition for a general
reduction by stages theorem and was formulated in the paper
of Marsden et al.~\cite{MMPR}. In their
monograph~\cite{MMOPR} this hypothesis was verified for all
split extensions ${\mathfrak g}$ using the Lie algebra
${\mathfrak n}$. Reformulating ``Stages Hypothesis'' in the
form (**) we found a short Lie-algebraic proof of this
hypothesis for all Lie algebras
${\mathfrak g}$ in our paper~\cite{MS}. A slight
modification of this proof and the using of Rawnsley's
approach~\cite{Ra} allow us to prove the fact (*) in this
paper (Theorem~\ref{th.18}) and to generalize results
derived earlier in the case of semidirect products by
Rawnsley~\cite{Ra}. In this direction our aim is to give, on
one hand, a description of the geometrical structure of
the coadjoint orbits in terms of fibre bundles having little
(reduced) algebra coadjoint orbits as fibres
(Proposition~\ref{pr.17}). On the other hand, we investigate
in detail the structure of the isotropy subgroups with
respect to the coadjoint representation of the Lie algebra
${\mathfrak g}$ and the little (reduced) algebra
(Proposition~\ref{pr.26}) and apply this to formulate
necessary conditions for the integrality of the coadjoint orbit of
${\mathfrak g}$ (Proposition~\ref{pr.27}). Proving the
non-sufficiencies of this condition even in the semidirect
product case, we show that the assertion~\cite[Corollary to
Prop.2]{Ra} is not correct (see Remark~\ref{re.30}).

The index of a Lie algebra is defined as the codimension in
the dual space of a coadjoint orbit of the maximal
dimension. The description of the geometrical structure of
the coadjoint orbits mentioned above gives us a new proof of
the formula for the index of Lie algebra
(Theorem~\ref{th.13} and Corollary~\ref{co.15}) obtained by
Panasyuk for arbitrary Lie algebras~\cite{Pa}. Moreover, our
approach allows us to find the direct connection of this
formula with the geometrical structure of the coadjoint
orbits of the little (reduced) Lie algebra. Remark that the proof
in~\cite{Pa} is based on the so-called ``symplectic
reduction by stages'' scheme~\cite{MMOPR} and on
calculations of the ranks of some Poisson submanifold of
${\mathfrak g}^*$ by the construction of the dual pairs of
Poisson manifolds. This formula for index is a
generalization of the well-known Ra\"{\i}s' formula for
semidirect products~\cite{Rais}. As another generalization
of the Ra\"{\i}s' formula we can mention Panyushev's index
formula~\cite{Pa05} for some subclass of graded Lie algebras.
Remark also that the index of representations associated with
stabilizers and so-called representations with good index behavior
was considered by Panyushev and Yakimova in the paper~\cite{Pa06}.

Summarizing the results of this
article, we mention the following point:
\begin{itemize}
\item
the properties (*) and (**) guarantee the existence of some
natural linear structure on the space of
$N_\nu$-orbits and, consequently, the interpretation of this
space as the dual space of some (reduced) Lie algebra, the
interpretation of the orbits in this space as the coadjoint
orbits in this dual space.
\end{itemize}

\section{Coadjoint orbits and their affine subspaces
defined by the ideal}

\subsection{Definitions and notation}

Let ${\mathfrak g}$ be a Lie algebra over the ground field
${\mathbb F}$, where ${\mathbb F}={\mathbb R}$ or
${\mathbb C}$, and
$\rho:{\mathfrak g}\to\operatorname{End}(V)$
be its finite-dimensional representation. Denote by
$\rho^*:{\mathfrak g}\to\operatorname{End}(V^*)$
the dual representation of
${\mathfrak g}$. An element $w\in V^*$ is called
${\mathfrak g}$-{\it regular} whenever its isotropy algebra
${\mathfrak g}_w=\{\xi\in{\mathfrak g}:\rho^*(\xi)w=0\}$
has minimal dimension. The set of all
${\mathfrak g}$-regular elements is open and dense in
$V^*$. Moreover, this set is Zariski open in $V^*$.
The non-negative integer
$\dim{\mathfrak g}_w$, where $w\in V^*$ is
${\mathfrak g}$-regular, is called the {\it index} of the
representation $\rho$ and is denoted by
$\operatorname{ind}({\mathfrak g},V)$. The {\it index}
$\operatorname{ind}{\mathfrak g}$ of the Lie algebra
${\mathfrak g}$ is the index of its coadjoint
representation, or equivalently, the dimension of the
isotropy algebra of a
${\mathfrak g}$-regular element in the dual
${\mathfrak g}^*$ (with respect to the coadjoint
representation). The set of all ${\mathfrak g}$-regular elements
in ${\mathfrak g}^*$ is denoted by $R({\mathfrak g}^*)$.

For any subspace ${\mathfrak a}\subset{\mathfrak g}$ (resp.
$V\subset{\mathfrak g}^*$) denote by
${\mathfrak a}^\bot\subset{\mathfrak g}^*$ (resp.
$V^\bot$) its annihilator in ${\mathfrak g}^*$ (resp. in
${\mathfrak g}$). It is clear that
$({\mathfrak a}^\bot)^\bot={\mathfrak a}$. A subset
${\mathcal A}\subset{\mathfrak g}^*$
will be called an affine
$k$-subspace if it is of the form
${\mathcal A}=\sigma+V$ where
$\sigma\in{\mathfrak g}^*$ is an element and
$V\subset{\mathfrak g}^*$ is a subspace of dimension
$k$. The direct and semi-direct products of Lie algebras are
denoted by
$\times$ and $\ltimes$ respectively. The direct sums of
spaces are denoted by
$\dotplus$. The identity component of an arbitrary Lie group
$H$ is denoted by $H^0$. We will write
$\pi_j$ for the $j$-homotopy group of a manifold. Also we
will often use the following well known statement on the
topology of homogeneous spaces (see~\cite[Ch.III, \S
6.6]{Bo} and~\cite[Ch.1,\S 3.4]{OV}):
\begin{lemma}\label{le.1}
For a connected Lie group
$K$ and its (not necessary closed) subgroup
$H$ the following holds: $1)$ if
$H$ is a normal subgroup of $K$ and
$\pi_1(K)=0$ then $H=\overline{H}$ and
$\pi_1(H^0)=\pi_1(K/H^0)=0$; $2)$ if
$H=\overline{H}$, $\pi_1(K/H)=\pi_2(K/H)=0$ then
the Lie subgroup $H$ is connected, i.e. $|H/H^0|=1$,
and $\pi_1(K)\simeq\pi_1(H)$; $3)$ if
$H=\overline{H}$ and $\pi_1(K)=0$ then
$\pi_1(K/H)\simeq H/H^0$. Here
$\overline{H}$ denotes the closure of $H$ in $K$.
\end{lemma}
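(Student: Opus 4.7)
The principal tool is the long exact sequence of homotopy groups for the fibre bundle $H \hookrightarrow K \to K/H$ (available when $H$ is closed), together with two classical facts: $\pi_2(L) = 0$ for every Lie group $L$, and any discrete normal subgroup of a connected Lie group is central (hence closed).

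Parts~3) and~2) are essentially bookkeeping. For~3), the hypothesis $H = \overline{H}$ supplies the fibration, and the segment
$$0 = \pi_1(K) \to \pi_1(K/H) \to \pi_0(H) \to \pi_0(K) = 0$$
yields $\pi_1(K/H) \simeq \pi_0(H) = H/H^0$. For~2), the segments
$$0 = \pi_2(K/H) \to \pi_1(H) \to \pi_1(K) \to \pi_1(K/H) = 0$$
and
$$0 = \pi_1(K/H) \to \pi_0(H) \to \pi_0(K) = 0$$
simultaneously give $\pi_1(H) \simeq \pi_1(K)$ and $\pi_0(H) = 0$, so $H$ is connected.

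Part~1) needs a preliminary step because $H$ is not assumed closed. First I would show that the connected Lie subgroup $H^0$ integrating the ideal $\mathfrak{h}$ of the Lie algebra $\mathfrak{k}$ of $K$ is already closed in $K$: since $K$ is simply connected, the quotient algebra $\mathfrak{k}/\mathfrak{h}$ integrates to a simply connected Lie group $\tilde L$, and the Lie algebra projection $\mathfrak{k} \to \mathfrak{k}/\mathfrak{h}$ lifts to a group morphism $K \to \tilde L$ whose kernel is closed and has identity component $H^0$. With $H^0$ closed and $K/H^0$ a connected Lie group (so $\pi_2(K/H^0) = 0$), the long exact sequence for $H^0 \hookrightarrow K \to K/H^0$ runs exactly as in part~2) and delivers $\pi_1(H^0) = 0$ and $\pi_1(K/H^0) = 0$. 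To establish $H = \overline{H}$, I would note that the image of $H$ in $K/H^0$ is a normal subgroup whose identity component is trivial (any element of $H$ close enough to the identity in $K$ lies in $H^0$), hence discrete; as a discrete normal subgroup of the connected Lie group $K/H^0$ it must be central and therefore closed, so $H$ itself is closed in $K$.

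The main obstacle is this closedness argument in part~1): passing from the Lie-algebraic description of $H^0$ (via the ideal $\mathfrak{h}$) to a topological conclusion about $H/H^0 \subset K/H^0$ requires the structural input about discrete normal subgroups and the simple connectedness of $K$. Once closedness is in hand, the $\pi_1$-computations in all three parts follow directly from the long exact sequence together with $\pi_2(L) = 0$ for Lie groups.
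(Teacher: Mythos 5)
The paper does not actually prove Lemma~1 --- it is quoted as a known fact with references to Bourbaki and Onishchik--Vinberg --- so there is no internal proof to compare against, and I assess your argument on its own. Parts~2) and~3) are correct: once $H=\overline H$ is assumed, $K\to K/H$ is a locally trivial fibre bundle and the two segments of the homotopy exact sequence you display give exactly the stated conclusions. The first half of your part~1) is also the standard argument: $\mathfrak h$ is an ideal because $H$ is normal and $K$ is connected; the quotient map $\mathfrak k\to\mathfrak k/\mathfrak h$ integrates (by simple connectedness of $K$) to a homomorphism whose kernel is closed with identity component $H^0$; and the exact sequence of $H^0\hookrightarrow K\to K/H^0$ together with $\pi_2(K/H^0)=0$ yields $\pi_1(H^0)=\pi_1(K/H^0)=0$.

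The gap is in the closedness of $H$ itself. Your key claim --- ``any element of $H$ close enough to the identity in $K$ lies in $H^0$'', whence the image of $H$ in $K/H^0$ is discrete --- does not hold for an arbitrary (immersed, not necessarily closed) subgroup, and in fact part~1) read literally fails without some such regularity: take $K=\mathbb R$ and $H=\mathbb Q$, a normal zero-dimensional Lie subgroup of a simply connected group with $H^0=\{e\}$; its image in $K/H^0=\mathbb R$ is $\mathbb Q$, which is neither discrete nor closed, and $\overline H=\mathbb R\neq H$. So the statement must be read for subgroups $H$ whose identity component $H^0$ is open in $H$ for the topology induced from $K$ (equivalently, $H\cap U\subset H^0$ for some neighbourhood $U$ of $e$ in $K$) --- for instance for connected (integral) normal subgroups, which is how part~1) is relevant to the paper. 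Under that reading your discreteness step is a restatement of the hypothesis rather than something you have derived, and in the connected case the first half of your argument already gives $H=H^0=\overline H$ with nothing left to do. As written, however, the quoted sentence is the one step that neither follows from the stated hypotheses nor is justified, and it is exactly where the counterexample enters; the rest of the proposal is sound.
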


\subsection{Coadjoint orbits and their isotropy groups}

Let $G$ be a connected real or complex Lie group with a normal
connected subgroup
$N\subset G$ (not necessary closed). Denote by
${\mathfrak g}$ and ${\mathfrak n}$
the corresponding Lie algebras. Since the Lie group
$N$ is a normal subgroup of $G$, we have
\begin{equation}\label{eq.1}
\operatorname{Ad}_n\xi-\xi\in{\mathfrak n}
\quad\text{for all}\quad
n\in N,\ \xi\in{\mathfrak g}.
\end{equation}
This fact is well known if the subgroup
$N$ is closed. To prove~\eqref{eq.1} in our general case it
is sufficient to remark that the curve
$n(\exp(t\xi)n^{-1}\exp(-t\xi))$ is the curve in
$N$ passing through the identity element.

Let $\operatorname{Ad}^*:G\to\operatorname{End}({\mathfrak g}^*)$
be the coadjoint representation of the Lie group
$G$ on the dual space ${\mathfrak g}^*$.
Since we shall consider also some subgroups of
$G$, by $\operatorname{Ad}^*_g$ and
$\operatorname{ad}^*_\xi$ we shall denote only the operators
on the space ${\mathfrak g}^*$, by $\operatorname{Ad}_g$ and
$\operatorname{ad}_\xi$ the operators on the Lie algebra
${\mathfrak g}$. Fix some linear functional
$\sigma\in{\mathfrak g}^*$. Denote by
$G_\sigma$ the isotropy group of
$\sigma$ (with respect to the coadjoint representation of
$G$) and by ${\mathfrak g}_\sigma$ its Lie algebra. Put
$N_\sigma=N\cap G_\sigma$ and
${\mathfrak n}_\sigma={\mathfrak n} \cap{\mathfrak g}_\sigma$.
The subgroup $N_\sigma$ is a closed subgroup in
$N$ with the Lie algebra
${\mathfrak n}_\sigma$. By the definition,
\begin{equation}\label{eq.2}
{\mathfrak g}_\sigma
=\{\xi\in{\mathfrak g}: \langle\sigma,[\xi,{\mathfrak g}]\rangle=0\}
\qquad\mbox{and}\qquad
{\mathfrak n}_\sigma
=\{y\in{\mathfrak n}: \langle\sigma,[y,{\mathfrak g}]\rangle=0\}.
\end{equation}

Since the subalgebra ${\mathfrak n}$ is an ideal of
${\mathfrak g}$, the adjoint representations of
${\mathfrak g}$ induce the representation
$\rho$ of ${\mathfrak g}$ in
${\mathfrak n}$, the adjoint action
$\operatorname{Ad}:G\to\operatorname{End}({\mathfrak g})$ of
$G$ induces $G$-action on ${\mathfrak n}$:
$G\times{\mathfrak n}\to{\mathfrak n}$,
$(g,y)\mapsto\operatorname{Ad}_g y$.
For the dual representation
$\rho^*$ of ${\mathfrak g}$ in
${\mathfrak n}^*$ we have:
$$
\langle\rho^*_\xi\mu,y\rangle=\langle\mu,[\xi,y]\rangle,
\quad\text{where}\
\xi\in{\mathfrak g}, \mu\in{\mathfrak n}^*, y\in{\mathfrak n}.
$$
The corresponding $G$-action on
${\mathfrak n}^*$ is defined by the equation
$\langle g\cdot\mu,y\rangle=\langle\mu,\operatorname{Ad}_g y\rangle$.
The restriction of this action on the subgroup
$N\subset G$ is its coadjoint action. Moreover, the
canonical projection
${\Pi_1^{\mathfrak g}}:{\mathfrak g}^*\to{\mathfrak n}^*$,
$\beta\mapsto\beta|{\mathfrak n}$ is a
$G$-equivariant mapping with respect to these two actions of
$G$ on the spaces ${\mathfrak g}^*$ and
${\mathfrak n}^*$ respectively:
$$
{\Pi_1^{\mathfrak g}}(\operatorname{Ad}_g^*\beta)
=g\cdot{\Pi_1^{\mathfrak g}}(\beta),
\qquad
\mbox{for all}
\ \beta\in{\mathfrak g}^*, g\in G.
$$
Indeed, for any $y\in{\mathfrak n}$
\begin{equation}\label{eq.3}
\langle{\Pi_1^{\mathfrak g}}(\operatorname{Ad}_g^*\beta),y\rangle=
\langle\operatorname{Ad}_g^*\beta,y\rangle=
\langle\beta,\operatorname{Ad}_g y\rangle=
\langle{\Pi_1^{\mathfrak g}}(\beta),\operatorname{Ad}_g y\rangle=
\langle g\cdot{\Pi_1^{\mathfrak g}}(\beta),y\rangle.
\end{equation}

On the other hand, the canonical homomorphism
$\pi:{\mathfrak g}\to{\mathfrak g}/{\mathfrak n}$
induces the canonical linear embedding
$\pi^*:({\mathfrak g}/{\mathfrak n})^*\to{\mathfrak g}^*$.
The following lemma is known. We will prove it for
completeness and also because the proof will be used to give
a more general result.
\begin{lemma}\label{le.2}
The canonical linear embedding
$\pi^*:({\mathfrak g}/{\mathfrak n})^*\to{\mathfrak g}^*$
maps each coadjoint orbit
${\mathcal O}_{\mathfrak b}$ of the quotient Lie algebra
${\mathfrak b}={\mathfrak g}/{\mathfrak n}$ onto some coadjoint orbit
${\mathcal O}_{\mathfrak g}$ of ${\mathfrak g}$.
This map defines a one-to-one correspondence between the
set of all coadjoint orbits in
$({\mathfrak g}/{\mathfrak n})^*$
and the set of all coadjoint orbits in
${\mathfrak g}^*$ belonging to the annihilator
${\mathfrak n}^\bot\subset{\mathfrak g}^*$.
Moreover, the restriction
$\pi^*: {\mathcal O}_{\mathfrak b}\to {\mathcal O}_{\mathfrak g}$
of the map $\pi^*$ is a symplectic map, i.e.
$(\pi^*|{\mathcal O}_{\mathfrak b})^*(\omega_{\mathfrak
g})=\omega_{\mathfrak b}$,
where $\omega_{\mathfrak g}$ and $\omega_{\mathfrak b}$
are the canonical Kirillov-Kostant-Souriau symplectic
2-forms on the coadjoint orbits
${\mathcal O}_{\mathfrak g}\subset{\mathfrak g}^*$ and
${\mathcal O}_{\mathfrak b}\subset{\mathfrak b}^*$
respectively.
\end{lemma}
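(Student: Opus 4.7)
The plan is to prove the set-theoretic part by exploiting $G$-equivariance of $\pi^*$, and then recover the symplectic identity by inspecting the infinitesimal expression of the Kirillov--Kostant--Souriau 2-form on fundamental vector fields. No part of the argument looks seriously delicate; everything reduces to functoriality of the Lie--Poisson construction under the surjective Lie algebra homomorphism $\pi$.

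First I would record the elementary fact that $\pi^*$ is injective (as the dual of a surjection) with image exactly $\mathfrak n^\bot$: indeed $\pi^*(\phi)(y)=\phi(\pi(y))=0$ for every $y\in\mathfrak n$, and any $\sigma\in\mathfrak n^\bot$ factors through $\pi$. Next, I would equip $\mathfrak b^*$ with the natural $G$-action $\langle g\cdot\phi,\pi(\xi)\rangle:=\langle\phi,\pi(\operatorname{Ad}_{g^{-1}}\xi)\rangle$; this is well-defined because $\operatorname{Ad}_g$ preserves $\mathfrak n$, and a calculation patterned on~\eqref{eq.3} immediately yields the equivariance $\pi^*(g\cdot\phi)=\operatorname{Ad}^*_g\pi^*(\phi)$. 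Using~\eqref{eq.1} one sees that $N$ acts trivially on $\mathfrak b^*$, so the induced infinitesimal action of $\mathfrak g$ descends to, and coincides with, the coadjoint representation of $\mathfrak b$. Since $G$ is connected, its orbits on $\mathfrak b^*$ are precisely the coadjoint orbits of the Lie algebra $\mathfrak b$.

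Given these ingredients, the orbit correspondence is almost formal. By equivariance, $\pi^*$ carries each coadjoint orbit $\mathcal O_\mathfrak b$ onto a $G$-orbit $\mathcal O_\mathfrak g$ inside $\pi^*(\mathfrak b^*)=\mathfrak n^\bot$; conversely any coadjoint orbit of $\mathfrak g$ contained in $\mathfrak n^\bot$ contains a point $\sigma=\pi^*\phi$ and is thus $\pi^*(G\cdot\phi)$. Injectivity of $\pi^*$ upgrades this to a bijection between the two families of orbits.

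For the symplectic part I would use the characterization $\omega_\mu(\operatorname{ad}^*_a\mu,\operatorname{ad}^*_b\mu)=\langle\mu,[a,b]\rangle$ of the KKS form on fundamental vectors, which span the tangent space to a coadjoint orbit. Differentiating the equivariance identity at the identity yields $(d\pi^*)_\phi\bigl(\operatorname{ad}^*_{\pi(\xi)}\phi\bigr)=\operatorname{ad}^*_\xi(\pi^*\phi)$ for every $\xi\in\mathfrak g$, and therefore
\[
\bigl((\pi^*|\mathcal O_\mathfrak b)^*\omega_\mathfrak g\bigr)_\phi\bigl(\operatorname{ad}^*_{\pi\xi}\phi,\operatorname{ad}^*_{\pi\eta}\phi\bigr)
=\langle\pi^*\phi,[\xi,\eta]_\mathfrak g\rangle
=\langle\phi,[\pi\xi,\pi\eta]_\mathfrak b\rangle,
\]
which is $\omega_\mathfrak b$ evaluated on the same pair of tangent vectors. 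The only mild subtlety I foresee is the identification of $G$-orbits on $\mathfrak b^*$ with the abstract coadjoint orbits of $\mathfrak b$ (important because $N$ need not be closed, so $G/N$ need not be a Lie group); this is handled by the observation above that $N$ acts trivially, whence the two notions coincide on the level of integral leaves of the coadjoint distribution.
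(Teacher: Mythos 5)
Your proposal is correct and follows essentially the same route as the paper: both establish the orbit correspondence from the $G$-equivariance of $\pi^*$ together with the fact that the induced $G$-action on $\mathfrak b^*$ factors through $N$ and generates exactly the coadjoint (inner-automorphism) orbits of $\mathfrak b$, and both verify the symplectic identity by the same linear-map computation $d(\pi^*)=\pi^*$ on the fundamental vectors $\operatorname{ad}^*_{\pi(\xi)}\phi$. The only differences are cosmetic (your coadjoint-action convention uses $\operatorname{Ad}_{g^{-1}}$ where the paper uses $\operatorname{Ad}_g$, and you phrase the identification of $G$-orbits with $\mathfrak b$-coadjoint orbits via triviality of the $N$-action and connectedness rather than via the homomorphism $\varphi:G\to\operatorname{Aut}(\mathfrak b)$ onto the inner automorphism group), so no further comparison is needed.
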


\begin{proof}
Since ${\mathfrak n}$ is an ideal of
${\mathfrak g}$, there exists a unique homomorphism
$\varphi$ of the Lie group
$G$ into the group of all automorphisms of the Lie algebra
${\mathfrak b}={\mathfrak g}/{\mathfrak n}$ such that
$\varphi(g)\circ\pi=\pi\circ\operatorname{Ad}_g$,
$g\in G$. The connected Lie group
$\operatorname{Ad}(G)$ is the group of inner automorphisms
of ${\mathfrak g}$. Since each inner derivation of
${\mathfrak b}$ is induced by some inner derivation of
${\mathfrak g}$, the image $B=\varphi(G)$ of $G$
is the Lie group of inner automorphisms of the Lie algebra
${\mathfrak b}$. Taking into account that
$\operatorname{Ad}^*_g\circ\pi^*=\pi^*\circ(\varphi(g))^*$,
$\pi^*({\mathfrak b}^*)={\mathfrak n}^\bot$ and
$\operatorname{Ad}^*(G)({\mathfrak n}^\bot)={\mathfrak n}^\bot$, we compete the proof of the
first assertion.

Choose some element
$\beta'\in{\mathcal O}_{\mathfrak b}$ and put $\beta=\pi^*(\beta')$.
To prove the second assertion, remark that the map
$\pi^*$ is linear. Then
$d(\pi^*)(\beta')=\pi^*$ and for any $\xi,\eta\in{\mathfrak g}$
\begin{equation*}
\begin{split}
\langle d(\pi^*)(\beta')(\widetilde{\operatorname{ad}}^*_{\pi(\xi)}\beta'),\eta\rangle
&=\langle\widetilde{\operatorname{ad}}^*_{\pi(\xi)}\beta',\pi(\eta)\rangle
=\langle\beta',[\pi(\xi),\pi(\eta)]_{\mathfrak b}\rangle \\
&=\langle\beta',\pi([\xi,\eta])\rangle
=\langle\operatorname{ad}^*_\xi\beta,\eta\rangle.
\end{split}
\end{equation*}
Thus $((\pi^*)^*\omega_{\mathfrak g})(\beta')(\widetilde{\operatorname{ad}}^*_{\pi(\xi)}\beta',
\widetilde{\operatorname{ad}}^*_{\pi(\eta)}\beta')
=\omega_{\mathfrak g}(\beta)(\operatorname{ad}^*_\xi\beta,\operatorname{ad}^*_\eta\beta)$
and by definition of $\omega_{\mathfrak g}$
$$
\omega_{\mathfrak g}(\beta)(\operatorname{ad}^*_\xi\beta,\operatorname{ad}^*_\eta\beta)
\stackrel{\mathrm{def}}{=}\langle\beta,[\xi,\eta]\rangle
=\langle\beta',[\pi(\xi),\pi(\eta)]_{\mathfrak b}\rangle
\stackrel{\mathrm{def}}{=}\omega_{\mathfrak b}(\beta')(\widetilde{\operatorname{ad}}^*_{\pi(\xi)}\beta',
\widetilde{\operatorname{ad}}^*_{\pi(\eta)}\beta').
$$
i.e. $(\pi^*|{\mathcal O}_{\mathfrak b})^*(\omega_{\mathfrak g})=\omega_{\mathfrak b}$.
\end{proof}

\begin{remark}\label{re.3}
As follows from Lemma~\ref{le.2} the set of all coadjoint
orbits of the Lie algebra ${\mathfrak g}$ contains the coadjoint orbits
of all its quotient algebras.
\end{remark}

For the element
$\sigma\in{\mathfrak m}^*$, consider its restriction
$\nu=\sigma|{\mathfrak n}\in{\mathfrak n}^*$. Denote by
$G_\nu$ and $N_\nu$ the isotropy groups of the element
$\nu$ with respect to the $\rho^*$-action, by
${\mathfrak g}_\nu$ and ${\mathfrak n}_\nu$
the corresponding Lie algebras. It is clear that
${\mathfrak n}_\nu={\mathfrak n}\cap{\mathfrak g}_\nu$
and the subgroup $N_\nu=N \cap G_\nu$ is a normal subgroup of
$G_\nu$. Remark here, that $N_\nu$
is also the usual isotropy group for coadjoint representation
of the Lie group $N$ on the dual space ${\mathfrak n}^*$.

Since $[{\mathfrak g} ,{\mathfrak n}]\subset{\mathfrak n}$,
by the definition,
\begin{align}\label{eq.4}
{\mathfrak g}_\nu
&=\{\xi\in{\mathfrak g} : \rho^*_\xi\nu=0\}
=\{\xi\in{\mathfrak g} : \langle\nu,[\xi,{\mathfrak n}]\rangle=0\}
=\{\xi\in{\mathfrak g} : \langle\sigma,[\xi,{\mathfrak n}]\rangle=0\},
\\\label{eq.5}
{\mathfrak n}_\nu
&=\{y\in{\mathfrak n} : \langle\nu,[y,{\mathfrak n} ]\rangle=0\}
=\{y\in{\mathfrak n} : \langle\sigma,[y,{\mathfrak n} ]\rangle=0\},
\end{align}
and
\begin{equation}\label{eq.6}
G_\nu=\{g\in G: g\cdot \nu=\nu\}=
\{g\in G: \operatorname{Ad}^*_g\sigma|{\mathfrak n}
=\sigma|{\mathfrak n}=\nu\}.
\end{equation}
Note that
\begin{equation}\label{eq.7}
\operatorname{Ad}(G_\nu)({\mathfrak n}_\nu)={\mathfrak n}_\nu
\end{equation}
because
$\operatorname{Ad}(G_\nu)({\mathfrak g}_\nu)={\mathfrak g}_\nu$
and $\operatorname{Ad}(G)({\mathfrak n})={\mathfrak n}$
(by definition~\eqref{eq.5}
$\langle\nu,[{\mathfrak n}_\nu,{\mathfrak n}]\rangle=0$).
Also by the identity
$\operatorname{Ad}^*(G)({\mathfrak n}^\bot)={\mathfrak n}^\bot$,
\begin{align}\label{eq.8}
G_\nu&=\{g\in G: \operatorname{Ad}^*_g({\mathcal A}_\nu)
={\mathcal A}_\nu\},
\text{ where }\\\label{eq.9}
{\mathcal A}_\nu&\stackrel{\mathrm{def}}{=}\sigma+{\mathfrak n}^\bot=
\{\alpha\in{\mathfrak g}^*:\alpha|{\mathfrak n}=\nu\}.
\end{align}

Let
${\mathcal O}^\sigma(G)=\{\operatorname{Ad}_g^*\sigma, g\in G\}$
be the coadjoint orbit of the Lie group
$G$ in ${\mathfrak g}^*$ through the point $\sigma$ and let
${\mathcal O}^\nu(G)=G\cdot\nu$ be the corresponding
$G$-orbit in ${\mathfrak n}^*$. Consider also the orbit
${\mathcal O}^\sigma(G_\nu)\subset {\mathcal O}^\sigma(G)$
in ${\mathfrak g}^*$ of the Lie group $G_\nu$.
\begin{lemma}\label{le.4}
The restriction $p_1={\Pi_1^{\mathfrak g}}|{\mathcal O}^\sigma(G)$
of the natural projection
${\Pi_1^{\mathfrak g}}:{\mathfrak g}^*\to{\mathfrak n}^*$
is a $G$-equivariant submersion of the coadjoint orbit
${\mathcal O}^\sigma(G)$ onto the orbit
${\mathcal O}^\nu(G)$. This map
${\Pi_1^{\mathfrak g}}:{\mathcal O}^\sigma(G)\to{\mathcal O}^\nu(G)$
is a bundle with the total space
${\mathcal O}^\sigma(G)$, the base
${\mathcal O}^\nu(G)$ and the fibre
${\mathcal O}^\sigma(G_\nu)$.
\end{lemma}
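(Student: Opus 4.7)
The plan is to prove the four claims (equivariance, surjectivity onto $\mathcal{O}^\nu(G)$, submersion, and the bundle structure with fibre $\mathcal{O}^\sigma(G_\nu)$) by combining the $G$-equivariance of $\Pi_1^{\mathfrak g}$ established in~\eqref{eq.3} with standard homogeneous-space facts.

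First I would note that the $G$-equivariance of $p_1$ is inherited directly from~\eqref{eq.3}, and surjectivity onto $\mathcal{O}^\nu(G)$ then follows automatically from $p_1(\operatorname{Ad}^*_g\sigma)=g\cdot\nu$, since the target is precisely the $G$-orbit of $\nu=p_1(\sigma)$. For the submersion property, by $G$-equivariance it is enough to check it at $\sigma$. Since $\Pi_1^{\mathfrak g}$ is linear, $dp_1(\sigma)$ is just its restriction to $T_\sigma\mathcal{O}^\sigma(G)=\{\operatorname{ad}^*_\xi\sigma:\xi\in\mathfrak{g}\}$, and for every $y\in\mathfrak{n}$ the identity $\langle\Pi_1^{\mathfrak g}(\operatorname{ad}^*_\xi\sigma),y\rangle=\langle\sigma,[\xi,y]\rangle=\langle\rho^*_\xi\nu,y\rangle$ shows that the image equals $T_\nu\mathcal{O}^\nu(G)=\{\rho^*_\xi\nu:\xi\in\mathfrak{g}\}$.

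For the fibre I would again use equivariance: $p_1^{-1}(\nu)\cap\mathcal{O}^\sigma(G)=\{\operatorname{Ad}^*_g\sigma:g\cdot\nu=\nu\}=G_\nu\cdot\sigma=\mathcal{O}^\sigma(G_\nu)$. To obtain the bundle structure, observe that both $G_\sigma$ and $G_\nu$ are closed subgroups of $G$, being the isotropy groups of the smooth $G$-actions on the finite-dimensional spaces $\mathfrak{g}^*$ and $\mathfrak{n}^*$, with $G_\sigma\subset G_\nu$ because any $g$ fixing $\sigma$ certainly fixes its restriction $\nu=\sigma|\mathfrak{n}$. Under the identifications $\mathcal{O}^\sigma(G)\cong G/G_\sigma$ and $\mathcal{O}^\nu(G)\cong G/G_\nu$, the map $p_1$ becomes the canonical projection $G/G_\sigma\to G/G_\nu$, which is a standard fibre bundle with fibre $G_\nu/G_\sigma\cong\mathcal{O}^\sigma(G_\nu)$.

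The only real subtlety is that $N$ is not assumed closed in $G$, so one must avoid treating $N$ itself as a closed Lie subgroup; but this causes no trouble, because $G_\nu$ and $G_\sigma$ are closed in $G$ regardless of the status of $N$, and the fibre $G_\nu/G_\sigma\cong\mathcal{O}^\sigma(G_\nu)$ is determined purely by these closed subgroups, so the homogeneous-space argument in the last step goes through unchanged.
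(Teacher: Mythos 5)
Your proof is correct and follows essentially the same route as the paper, whose entire argument is the remark that ${\mathcal O}^\sigma(G)\simeq G/G_\sigma$, ${\mathcal O}^\nu(G)\simeq G/G_\nu$ and ${\mathcal O}^\sigma(G_\nu)\simeq G_\nu/G_\sigma$, so that $p_1$ is the canonical projection $G/G_\sigma\to G/G_\nu$, a bundle with fibre $G_\nu/G_\sigma$. You merely fill in the routine verifications (equivariance via~\eqref{eq.3}, surjectivity, the tangent-map computation, and the closedness of $G_\sigma\subset G_\nu$) that the paper leaves implicit.
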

To prove the lemma it is sufficient to remark that
${\mathcal O}^\sigma(G)\simeq G/G_\sigma$,
${\mathcal O}^\nu(G)\simeq G/G_\nu$ and
${\mathcal O}^\sigma(G_\nu)\simeq G_\nu/G_\sigma$.

The coadjoint orbit
${\mathcal O}^\sigma(G)\subset{\mathfrak g}^*$
is a symplectic manifold with the symplectic
Kirillov-Kostant-Souriau 2-form
$\omega$:
\begin{equation}\label{eq.10}
\omega(\sigma)(\operatorname{ad}^*_\xi\sigma,
\operatorname{ad}^*_\eta\sigma)\stackrel{\mathrm{def}}{=}
\langle\sigma,[\xi,\eta]\rangle,
\quad\text{where}\quad
\xi,\eta\in{\mathfrak g}.
\end{equation}
Here the tangent space
$T_\sigma{\mathcal O}^\sigma(G)$
is identified, as usual, with the subspace
$\operatorname{ad}^*_{\mathfrak g}\sigma$ of
${\mathfrak g}^*$.
We will say that a submanifold
$M\subset{\mathcal O}^\sigma(G)$
is an {\it isotropic submanifold} of the orbit
${\mathcal O}^\sigma(G)$ if for each point
$\alpha\in M$ the tangent space
$T_\alpha M$ is an isotropic subspace of
$T_\alpha{\mathcal O}^\sigma(G)$ with respect to the form
$\omega$, i.e.
$\omega(\alpha)(T_\alpha M,T_\alpha M)=0$.

Let us consider two orbits
${\mathcal O}^\sigma(N)$ and
${\mathcal O}^\sigma(G_\nu)$ (submanifolds of
${\mathcal O}^\sigma(G)$) through the point
$\sigma$. It follows immediately from
definition~\eqref{eq.10} that the tangent space
$T_\sigma{\mathcal
O}^\sigma(G_\nu)=\operatorname{ad}^*_{{\mathfrak g}_\nu}\sigma$
is an orthogonal complement to the tangent space
$T_\sigma{\mathcal
O}^\sigma(N)=\operatorname{ad}^*_{{\mathfrak n}}\sigma$
in $T_\sigma{\mathcal O}^\sigma(G)$
with respect to the symplectic form
$\omega$:
\begin{equation}\label{eq.11}
\omega(\sigma)(\operatorname{ad}^*_\xi\sigma,
\operatorname{ad}^*_{\mathfrak n}\sigma)\stackrel{\mathrm{def}}{=}
\langle\sigma,[\xi,{\mathfrak n}]\rangle=0
\quad\Longleftrightarrow\quad
\xi\in{\mathfrak g}_\nu.
\end{equation}
Since the form
$\omega$ is non-degenerate and the isotropy algebra
${\mathfrak g}_\sigma$ is a subalgebra of
${\mathfrak g}_\nu$ (see definition~\eqref{eq.2}), we have
\begin{equation}\label{eq.12}
\dim{\mathfrak g}-\dim{\mathfrak g}_\sigma
=(\dim{\mathfrak n}-\dim{\mathfrak n}_\sigma)+
(\dim{\mathfrak g}_\nu-\dim{\mathfrak g}_\sigma).
\end{equation}
This identity can be easily rewritten in the following form
\begin{equation}\label{eq.13}
\dim{\mathfrak n}_\sigma=\dim{\mathfrak n}
-(\dim{\mathfrak g}-\dim{\mathfrak g}_\nu),
\end{equation}
i.e. the dimension of the Lie algebra
$\dim{\mathfrak n}_\sigma$ depends on its restriction
$\nu=\sigma|{\mathfrak n}$ alone. Moreover, by the
commutation relation
$[{\mathfrak g} ,{\mathfrak n}]\subset{\mathfrak n}$,
the algebra
${\mathfrak n}_\sigma$ also depends only on this restriction
$\nu$:
\begin{equation}\label{eq.14}
{\mathfrak n}_\sigma
\stackrel{\mathrm{def}}{=}\{y\in{\mathfrak n}:
\langle\sigma,[y,{\mathfrak g}]\rangle=0\}
=\{y\in{\mathfrak n}: \langle\nu,[y,{\mathfrak g}]\rangle=0\}.
\end{equation}
This Lie algebra and the corresponding connected Lie subgroup of
$N_\nu$ will be denoted by
${\mathfrak n}_{\nu\nu}$ and $N^0_{\nu\nu}$
respectively. In other words, for each element
$\alpha\in{\mathfrak g}^*$ such that
$\alpha|{\mathfrak n}=\sigma|{\mathfrak n}$:
\begin{equation}\label{eq.15}
{\mathfrak n}_{\alpha}={\mathfrak n}_\sigma={\mathfrak n}_{\nu\nu}
\quad\text{and}\quad
N^0_{\alpha}=N^0_\sigma=N^0_{\nu\nu}.
\end{equation}
In particular,
$N^0_{\nu\nu}$ is a closed subgroup of the Lie groups
$N$ and $N_\nu$. Moreover, this subgroup is the connected
component of the closed subgroup
$N_{\nu\nu}$ of $N_\nu\subset N$, where
\begin{equation}\label{eq.16}
N_{\nu\nu}=\{n\in N: \operatorname{Ad}^*_n(\alpha)=\alpha
\ \text{for all}\ \alpha\in {\mathcal A}_\nu\}=
\bigcap_{\alpha\in {\mathcal A}_\nu} N_\alpha.
\end{equation}

However, we can rewrite identity~\eqref{eq.13} in the following
form:
\begin{equation}\label{eq.17}
\dim{\mathfrak n}_\nu-\dim{\mathfrak n}_\sigma
=\dim{\mathfrak g}-(\dim{\mathfrak n}+\dim{\mathfrak g}_\nu
-\dim{\mathfrak n}_\nu).
\end{equation}
The right-hand side of this identity is the codimension of the subspace
${\mathfrak n}+{\mathfrak g}_\nu$ in
${\mathfrak g}$ because by definition
${\mathfrak n}_\nu={\mathfrak g}_\nu\cap{\mathfrak n}$.
The left-hand side of~\eqref{eq.17} is the dimension of the subspace
$\operatorname{ad}^*_{{\mathfrak n}_\nu}\sigma\subset{\mathfrak g}^*$.
But $\operatorname{ad}^*_{{\mathfrak n}_\nu}\sigma$
is a subspace of $({\mathfrak n}+{\mathfrak g}_\nu)^\bot$ because
$\operatorname{ad}^*_{{\mathfrak n}_\nu}\sigma({\mathfrak n})=0$
by definition~\eqref{eq.5} and
$\operatorname{ad}^*_{{\mathfrak n}_\nu}\sigma({\mathfrak g}_\nu)=0$
by~\eqref{eq.4}. Therefore from~\eqref{eq.17} it follows that
\begin{equation}\label{eq.18}
\dim({\mathfrak n}_\nu/{\mathfrak n}_\sigma)
=\dim({\mathfrak n}+{\mathfrak g}_\nu)^\bot
\quad\text{and, consequently,}\quad
\operatorname{ad}^*_{{\mathfrak n}_\nu}\sigma
=({\mathfrak n}+{\mathfrak g}_\nu)^\bot.
\end{equation}

\begin{remark}\label{re.5}
The subspace
$\operatorname{ad}^*_{{\mathfrak n}_\nu}\sigma\subset{\mathfrak g}^*$
is the tangent space to the orbit
${\mathcal O}^\sigma(N_\nu)={\mathcal
O}^\sigma(G_\nu)\cap{\mathcal O}^\sigma(N)$
of the Lie group $N_\nu$ through the point
$\sigma\in{\mathfrak g}^*$ and, as we shown above, this
space is the null space of the restrictions
$\omega|T_\sigma{\mathcal O}^\sigma(G_\nu)$ and
$\omega|T_\sigma{\mathcal O}^\sigma(N)$.
\end{remark}

Our interest now centers on the two orbits in
${\mathfrak g}^*$ (through the element
$\sigma$) mentioned above: ${\mathcal O}^\sigma(G_\nu)$ and
${\mathcal O}^\sigma(N_\nu)$. First of all, we will show
that
${\mathcal O}^\sigma(N^0_\nu)=\sigma+({\mathfrak
n}+{\mathfrak g}_\nu)^\bot$,
i.e. this orbit is an affine subspace of
${\mathfrak g}^*$. To this end, we consider the kernel
${\mathfrak n} ^\natural_\nu\subset{\mathfrak n}_\nu$
of the restriction $\nu|{\mathfrak n}_\nu$, i.e.
${\mathfrak n}^\natural_\nu=\ker\nu\cap{\mathfrak n}_\nu$.
Remark that ${\mathfrak n}^\natural_\nu={\mathfrak n}_\nu$ or
$\dim({\mathfrak n}_\nu/{\mathfrak n}^\natural_\nu)=1$.
By~\eqref{eq.4}
\begin{equation}\label{eq.19}
[{\mathfrak g}_\nu,{\mathfrak n} ]\subset\ker\nu
\quad\text{and}\quad
[{\mathfrak g}_\nu,{\mathfrak n}_\nu]
\subset({\mathfrak g}_\nu\cap{\mathfrak n})\cap\ker\nu
={\mathfrak n}^\natural_\nu,
\end{equation}
so that the subspace
${\mathfrak n}^\natural_\nu$ is an ideal in
${\mathfrak g}_\nu$. Moreover, since $h\cdot\nu=\nu$,
$\operatorname{Ad}_h({\mathfrak n}_\nu)={\mathfrak n}_\nu$
for all $h\in G_\nu$ (see~\eqref{eq.7}) and, by the definition,
$\langle h\cdot\nu,y\rangle=\langle\nu,\operatorname{Ad}_h y\rangle$
for $y\in{\mathfrak n}$, we have
\begin{equation}\label{eq.20}
\operatorname{Ad}_h({\mathfrak n}^\natural_\nu)={\mathfrak n}^\natural_\nu
\quad\text{for all } h\in G_\nu.
\end{equation}
Let $N_\nu^\mathrm{fin}$ be the subgroup of
$N_\nu$ generated by all elements
$n\in N_\nu$ such that the power
$(\operatorname{Ad}_n)^m\in\operatorname{Ad}(N^0_\nu)$
for some integer
$m\in{\mathbb Z}$. This group is a closed Lie subgroup of
$N_\nu$ because it contains the identity component
$N^0_\nu$ of $N_\nu$. We claim that
\begin{equation}\label{eq.21}
\operatorname{Ad}_n\xi-\xi\in{\mathfrak n}^\natural_\nu
\quad\text{for all } \xi\in{\mathfrak g}_\nu
\text{ and } n\in N_\nu^\mathrm{fin}\supset N^0_\nu.
\end{equation}
Relations~\eqref{eq.21} were established in~\cite[\S
5.2]{MMOPR} in the case when the Lie group
$N_\nu$ is connected. We will prove~\eqref{eq.21} modifying
the method used in~\cite{MMOPR}. To this end consider the
representation $n\mapsto \operatorname{Ad}_n|{\mathfrak g}_\nu$
of the Lie group
$N_\nu\subset G_\nu$. This representation induces the
trivial representation of the identity component
$N_\nu^0\subset N_\nu$ in the quotient algebra
${\mathfrak g}_\nu/{\mathfrak n}^\natural_\nu$ because
$[{\mathfrak n}_\nu,{\mathfrak g}_\nu]\subset{\mathfrak n}^\natural_\nu$
(the corresponding homomorphism of Lie algebras is trivial).
Thus relations~\eqref{eq.21} hold for all
$n\in N^0_\nu$, i.e.
$\langle\nu, \operatorname{Ad}_n\xi-\xi\rangle=0$
for all such $n$.

Since $N_\nu$ is a normal (not necessary closed) subgroup of
$G_\nu$, we have
$\operatorname{Ad}_n\xi-\xi\in{\mathfrak n}_\nu$ for all
$n\in N_\nu$ and $\xi\in{\mathfrak g}_\nu$
(see~\eqref{eq.1}). Now to prove~\eqref{eq.21}
we will show that the mapping
$$
\chi_\xi:N_\nu\to{\mathbb F},\quad
\chi_\xi(n)=\langle\nu, \operatorname{Ad}_n\xi-\xi\rangle,
\quad\xi\in{\mathfrak g}_\nu,
$$
is a homomorphism of the group
$N_\nu$ into the additive group ${\mathbb F}$. Indeed,
for $n_1,n_2\in N_\nu$,
\begin{align*}
\langle\nu,\operatorname{Ad}_{n_1n_2}\xi-\xi\rangle
&=\langle\nu,\operatorname{Ad}_{n_1}(\operatorname{Ad}_{n_2}\xi-\xi)
+(\operatorname{Ad}_{n_1}\xi-\xi)\rangle \\
&=\langle\nu,(\operatorname{Ad}_{n_2}\xi-\xi)
+(\operatorname{Ad}_{n_1}\xi-\xi)\rangle,
\end{align*}
because
$n_1\cdot\nu=\nu$. Now, if
$(\operatorname{Ad}_n)^m\in\operatorname{Ad}(N^0_\nu)$ then
$$
m\chi_\xi(n)=\chi_\xi(n^m)
=\langle\nu, (\operatorname{Ad}_n)^m\xi-\xi\rangle=0.
$$
The proof of~\eqref{eq.21} is completed.

For the element $\sigma\in{\mathfrak g}^*$ denote by
$\tau$ its restriction
$\sigma|{\mathfrak g}_\nu$. Using the pair of covectors
$\nu\in{{\mathfrak n}}^*$ and
$\tau\in{\mathfrak g}_\nu^*$ define the affine subspace
${{\mathcal A}}_{\nu\tau}\subset{{\mathcal A}}_{\nu}
\subset{\mathfrak g}^*$
as follows:
\begin{equation}\label{eq.22}
{\mathcal A} _{\nu\tau}=
\{\alpha\in{\mathfrak g}^*: \alpha|{\mathfrak n}
=\nu,\ \alpha|{\mathfrak g}_\nu=\tau\}=
\sigma+({\mathfrak n}+{\mathfrak g}_\nu)^\bot.
\end{equation}
It is clear that
\begin{equation}\label{eq.23}
\begin{split}
\dim {\mathcal A} _{\nu\tau}
&=\mathrm{codim}\, ({\mathfrak n}+{\mathfrak g}_\nu) \\
&=\dim{\mathfrak g}-(\dim{\mathfrak n}
+\dim{\mathfrak g}_\nu-\dim{\mathfrak n}_\nu) \\
&=\dim(G/G_\nu)-\dim(N/N_\nu) \\
&=\dim(G/N)-\dim(G_\nu/N_\nu).
\end{split}
\end{equation}
We claim that this affine subspace
${{\mathcal A}}_{\nu\tau}\subset{\mathfrak g}^*$
is invariant with respect to the action of the Lie group
$N_\nu^\mathrm{fin}\subset N_\nu$
(containing the identity component
$N^0_\nu$ of $N_\nu$). Indeed, let
$\alpha\in{\mathcal A} _{\nu\tau}$ and
$n\in N^\mathrm{fin}_\nu$. Since
$N^\mathrm{fin}_\nu\subset N_\nu= N\cap G_\nu$ and
${\mathcal A}_{\nu\tau}\subset {\mathcal A}_\nu$,
by~\eqref{eq.8} $\operatorname{Ad}_n^*\alpha|{\mathfrak n}=\nu$.
To prove that $\operatorname{Ad}_n^*\alpha|{\mathfrak g}_\nu=\tau$
remark that by~\eqref{eq.21} for all vectors
$\xi\in{\mathfrak g}_\nu$ we have
$$
\langle\operatorname{Ad}_n^*\alpha-\alpha,\xi\rangle=
\langle\alpha,\operatorname{Ad}_n\xi-\xi\rangle\in
\langle\alpha,{\mathfrak n} ^\natural_\nu\rangle=
\langle\nu,{\mathfrak n} ^\natural_\nu\rangle=0,
$$
i.e.
\begin{equation}\label{eq.24}
\operatorname{Ad}^*_n({\mathcal A} _{\nu\tau})
\subset{\mathcal A} _{\nu\tau}
\quad\text{for all } n\in N_\nu^\mathrm{fin}\supset N^0_\nu.
\end{equation}
As we have shown above, the Lie algebra
${\mathfrak n}_\sigma$ is defined by the restriction
$\sigma|{\mathfrak n}=\nu$ alone, therefore
${\mathfrak n}_\sigma={\mathfrak n}_\alpha$
(see~\eqref{eq.15}). By definition
$N_\alpha\subset N_\nu$ and, consequently,
$N^0_\alpha\subset N^0_\nu$. Taking into
account~\eqref{eq.18}, we obtain that the
$\operatorname{Ad}^*(N^0_\nu)$-orbit in the space
${\mathcal A} _{\nu\tau}$ through the element
$\alpha$ (isomorphic to the quotient space
$N^0_\nu/(N_\alpha\cap N^0_\nu)$) is an open subset of
${\mathcal A} _{\nu\tau}$:
$$
\dim N^0_\nu/(N_\alpha\cap N^0_\nu)=
\dim({\mathfrak n}_\nu/{\mathfrak n}_\alpha)=
\dim({\mathfrak n}_\nu/{\mathfrak n}_\sigma)=
\dim({\mathfrak n} +{\mathfrak g}_\nu)^\bot=
\dim {\mathcal A} _{\nu\tau}.
$$
Since the space
${\mathcal A} _{\nu\tau}$ is connected, this orbit is the
whole space ${\mathcal A} _{\nu\tau}$, i.e.
$\operatorname{Ad} ^*(N^0_\nu)$ acts transitively on
${\mathcal A} _{\nu\tau}$. Since the affine space
${\mathcal A} _{\nu\tau}$ is contractible, by
Lemma~\ref{le.1} the isotropy group
$N_\alpha\cap N^0_\nu$ is connected, that is, it is equal to
$N^0_\alpha$ (the identity component of
$N_\alpha\subset N_\nu$). Similarly, the group
$N_\nu^\mathrm{fin}$ acts transitively on
${\mathcal A} _{\nu\tau}$ and, consequently,
$$
N_\nu^\mathrm{fin}/N^0_\nu
\simeq (N_\nu^\mathrm{fin}\cap N_\alpha)/N^0_\alpha.
$$
Also by Lemma~\ref{le.1},
$$
\pi_1(N^0_\nu)\simeq\pi_1(N^0_\alpha)
$$
because
$\pi_1({\mathcal A} _{\nu\tau})=\pi_2({\mathcal A} _{\nu\tau})=0$.
Thus
\begin{equation}\label{eq.25}
{\mathcal A}_{\nu\tau}
\simeq N^0_\nu/N^0_\sigma=N^0_\nu/N^0_{\nu\nu}
\qquad \text{and}\qquad
{\mathcal A}_{\nu\tau}
\simeq N_\nu^\mathrm{fin}/(N_\sigma\cap N_\nu^\mathrm{fin}).
\end{equation}
Since the action of $N^0_\nu$ on ${\mathcal A}_{\nu\tau}$
is transitive and the isotropy group
$N^0_\alpha=N^0_{\nu\nu}$ is the same for all points
$\alpha\in {\mathcal A}_{\nu\tau}$, the group
$N^0_{\nu\nu}$ is a normal subgroup of $N^0_\nu$.

Consider now the isotropy group
$G_\nu$. The algebra ${\mathfrak g}_\nu$
is its tangent Lie algebra. For the element
$\tau=\sigma|{\mathfrak g}_\nu$ denote by
$G_{\nu\tau}$ the isotropy group of
$\tau\in {\mathfrak g}^*_\nu$ with respect to the natural
co-adjoint action of $G_\nu$ on
${\mathfrak g}^*_\nu$, which we denote by
$\widehat{\operatorname{Ad}}^*$. Let
${\mathcal O}^\tau(G_\nu)\subset{\mathfrak g}^*_\nu$
be the corresponding $\widehat{\operatorname{Ad}}^*$-orbit of
$G_\nu$ passing through the point
$\tau$ (the union of disjoint coadjoint orbits in
${\mathfrak g}^*_\nu$). Then
${\mathcal O}^\tau(G_\nu)\simeq G_\nu/G_{\nu\tau}$.
Taking into account that the
$\widehat{\operatorname{Ad}}$-action of $G_\nu$ on
${\mathfrak g}_\nu$ is determined by the
$\operatorname{Ad}$-action of $G$ on
${\mathfrak g}$, we obtain that
the natural projection
\begin{equation}\label{eq.26}
{\Pi_{2}^{\mathfrak g}}:{\mathfrak g}^*\to{\mathfrak g}_\nu^*,\quad
\beta\mapsto\beta|{\mathfrak g}_\nu,
\end{equation}
is a $G_\nu$-equivariant map with respect to the coadjoint
actions $\operatorname{Ad}^*$ and
$\widehat{\operatorname{Ad}}^*$ of $G_\nu$. Hence
\begin{equation}\label{eq.27}
{\mathcal O}^\tau(G_\nu)
\stackrel{\mathrm{def}}{=}
\{\operatorname{\widehat{Ad}}_g^*\tau, g\in G_\nu\}
={\Pi_{2}^{\mathfrak g}}({\mathcal O}^\sigma(G_\nu))
=\{(\operatorname{Ad}_g^*\sigma)|{\mathfrak g}_\nu,g\in G_\nu\}
\end{equation}
and
\begin{equation}\label{eq.28}
G_{\nu\tau}
=\{g\in G:\operatorname{Ad}^*_g\sigma|{\mathfrak n}
=\sigma|{\mathfrak n}=\nu,\
\operatorname{Ad}^*_g\sigma|{\mathfrak g}_\nu
=\sigma|{\mathfrak g}_\nu=\tau\}.
\end{equation}
Since by definition,
$\operatorname{Ad}^*(G_\nu)({\mathfrak n}+{\mathfrak g}_\nu)^\bot
=({\mathfrak n}+{\mathfrak g}_\nu)^\bot$, we have
\begin{equation}\label{eq.29}
G_{\nu\tau}=\{g\in G: \operatorname{Ad}^*_g({\mathcal A}_{\nu\tau})
={\mathcal A} _{\nu\tau}\}.
\end{equation}
Therefore by~\eqref{eq.24} the group
$G_{\nu\tau}$ contains the identity component
$N^0_\nu$ of $N_\nu$ and, moreover, the subgroup
$N_\nu^\mathrm{fin}\subset N_\nu$. The Lie algebra
${\mathfrak g}_{\nu\tau}$ of
$G_{\nu\tau}$ contains the Lie algebra
${\mathfrak n}_\nu$. Remark also that by definition
$G_\sigma\subset G_{\nu\tau}$ and
${\mathfrak g}_\sigma\subset{\mathfrak g}_{\nu\tau}$. Since
$N^0_\nu\subset G^0_{\nu\tau}\subset G_{\nu\tau}$,
the groups $\operatorname{Ad}^*(G^0_{\nu\tau})$ and
$\operatorname{Ad}^*(G_{\nu\tau})$
act transitively on the affine space
${\mathcal A}_{\nu\tau}$, that is
\begin{equation}\label{eq.30}
G^0_{\nu\tau}/(G_\sigma\cap G^0_{\nu\tau})\simeq
G_{\nu\tau}/G_\sigma\simeq N^0_\nu/N^0_\sigma\simeq
{\mathcal A}_{\nu\tau}
\end{equation}
and, consequently,
\begin{equation}\label{eq.31}
G_{\nu\tau}=N^0_\nu\cdot G_\sigma=G_\sigma\cdot N^0_\nu
\quad\text{and}\quad
{\mathfrak g}_{\nu\tau}={\mathfrak n}_\nu+{\mathfrak g}_\sigma.
\end{equation}
In particular,
\begin{equation}\label{eq.32}
\dim {\mathfrak g}_{\nu\tau}-\dim{\mathfrak g}_\sigma
=\dim{\mathfrak n}_\nu-\dim{\mathfrak n}_\sigma.
\end{equation}
Moreover, applying Lemma~\ref{le.1} to the spaces
in~\eqref{eq.30} we obtain that
\begin{equation}\label{eq.33}
G_\sigma\cap G^0_{\nu\tau}=G^0_\sigma,
\quad\pi_1(G^0_{\nu\tau})=\pi_1(G^0_\sigma)
\quad\text{and}\quad
G_{\nu\tau}/G^0_{\nu\tau}\simeq G_\sigma/G^0_\sigma.
\end{equation}
Also
$G^0_{\nu\tau}=N^0_\nu\cdot G^0_\sigma=G^0_\sigma\cdot N^0_\nu$.
But the group $N_\nu$ is a normal subgroup in
$G_\nu$ and, consequently, the group
$N_\nu^0$ is a normal subgroup in
$G^0_{\nu\tau}\subset G_\nu$. Similarly, by the definition the
group $N^0_\sigma=N^0_{\nu\nu}$ is a normal subgroup of
$G_\sigma$. Since this group is also a normal subgroup in
$N^0_\nu$, by~\eqref{eq.31} the group
$N^0_{\nu\nu}$ is a normal subgroup in
$G_{\nu\tau}$.

The group $N^0_\nu\subset G_{\nu\tau'}$ is the same group for all
$\tau'\in{\mathfrak g}_\nu$. The sum
${\mathcal A}_\nu=\bigcup_{\tau'\in{\mathcal
A}_\nu|{\mathfrak g}_\nu} {\mathcal A}_{\nu\tau'}$
is the union of the orbits of the group
$N^0_\nu$, the parallel affine subspaces of
${\mathcal A}_\nu$ with the associated vector space
$({\mathfrak n}+{\mathfrak g}_\nu)^\bot$.

Remark that
${\mathcal O}^\nu(G)$ is a disjoint union of coadjoint
orbits (isomorphic to
${\mathcal O}^\nu(N)\simeq N/N_\nu$) in the dual space
${\mathfrak n}^*$ and the group $G$
acts transitively on the set of these orbits. Moreover, by
equation~\eqref{eq.23} the dimension of
${\mathcal A}_{\nu\tau}$ is equal to the codimension of the
coadjoint orbit
${\mathcal O}^\nu(N)\subset {\mathfrak n}^*$ in the
$G$-orbit ${\mathcal O}^\nu(G)\subset{\mathfrak n}^*$.
The affine space ${\mathcal A}_{\nu\tau}$ as the orbit
${\mathcal O}^\sigma(N^0_\nu)\subset {\mathcal O}^\sigma(G)$
is an isotropic submanifold of the coadjoint orbit
${\mathcal O}^\sigma(G)$ (see relations~\eqref{eq.11} and
Remark~\ref{re.5}). We have proved
\begin{proposition}\label{pr.6}
The affine space ${\mathcal A}_{\nu\tau}$~\eqref{eq.22}
is an isotropic submanifold of the coadjoint orbit
${\mathcal O}^\sigma(G)\subset{\mathfrak g}^*$
containing the point $\sigma$ and
$\dim{\mathcal A}_{\nu\tau}=\dim{\mathcal
O}^\nu(G)-\dim{\mathcal O}^\nu(N)$.
The Lie subgroups $\operatorname{Ad}^*(N_\nu^0)$,
$\operatorname{Ad}^*(N_\nu^\mathrm{fin})$,
$\operatorname{Ad}^*(G_{\nu\tau}^0)$ and
$\operatorname{Ad}^*(G_{\nu\tau})$ of
$\operatorname{Ad}^*(G)$ preserve the affine subspace
${\mathcal A}_{\nu\tau}\subset{\mathfrak g}^*$.
The actions of these groups on
${\mathcal A}_{\nu\tau}$ are transitive. Moreover, the
orbits of the action of
$\operatorname{Ad}^*(N_\nu^0)$ on the affine subspace
${\mathcal A}_{\nu}\subset{\mathfrak g}^*$
are the parallel affine subspaces with the associated vector space
$({\mathfrak n}+{\mathfrak g}_\nu)^\bot$. The group
$N^0_{\nu\nu}$ is a normal subgroup of the Lie groups
$G_{\nu\tau}$, $N^0_\nu$ and topologically
$N^0_\nu/N^0_{\nu\nu}\simeq ({\mathfrak n}+{\mathfrak g}_\nu)^\bot$.
\end{proposition}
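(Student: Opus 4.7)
The plan is to extract and organize the content of the discussion preceding the statement. The central identification that ties everything together is ${\mathcal A}_{\nu\tau}={\mathcal O}^\sigma(N_\nu^0)$, provided by~\eqref{eq.25} and the paragraph around it: this bridge lets me transfer properties of an orbit (stabilizer, tangent space, isotropy) to the affine subspace, and vice versa.

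First I would dispose of the symplectic claims. Because ${\mathcal A}_{\nu\tau}$ coincides with the $\operatorname{Ad}^*(N_\nu^0)$-orbit of $\sigma$, its tangent space at $\sigma$ is $\operatorname{ad}^*_{{\mathfrak n}_\nu}\sigma$; by Remark~\ref{re.5}, this subspace is the null space of $\omega|T_\sigma{\mathcal O}^\sigma(G_\nu)$ and hence is isotropic at $\sigma$. Since $N_\nu^0$ acts on ${\mathcal O}^\sigma(G)$ by symplectomorphisms and transitively on ${\mathcal A}_{\nu\tau}$, isotropy then propagates to every point of ${\mathcal A}_{\nu\tau}$. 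The dimension equality $\dim{\mathcal A}_{\nu\tau}=\dim{\mathcal O}^\nu(G)-\dim{\mathcal O}^\nu(N)$ is an immediate rewriting of~\eqref{eq.23} via $\dim(G/G_\nu)=\dim{\mathcal O}^\nu(G)$ and $\dim(N/N_\nu)=\dim{\mathcal O}^\nu(N)$.

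Next I would collect the preservation and transitivity statements. Preservation of ${\mathcal A}_{\nu\tau}$ by $\operatorname{Ad}^*(N_\nu^\mathrm{fin})\supset\operatorname{Ad}^*(N_\nu^0)$ is~\eqref{eq.24}; preservation by $\operatorname{Ad}^*(G_{\nu\tau})\supset\operatorname{Ad}^*(G_{\nu\tau}^0)$ is the definition~\eqref{eq.29}. Transitivity of $\operatorname{Ad}^*(N_\nu^0)$ is~\eqref{eq.25}, and the transitivity of the three larger groups follows a fortiori, since each contains $N_\nu^0$. For the parallel-orbit decomposition, the expression ${\mathcal A}_\nu=\bigcup_{\tau'}{\mathcal A}_{\nu\tau'}$ in the paragraph just before the proposition, together with the fact that $N_\nu^0$ preserves each fibre ${\mathcal A}_{\nu\tau'}$ by~\eqref{eq.24} and acts transitively on it, shows that the $N_\nu^0$-orbits in ${\mathcal A}_\nu$ are exactly these fibres, all of which are translates of the common direction space $({\mathfrak n}+{\mathfrak g}_\nu)^\bot$ by~\eqref{eq.22}.

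Finally, normality of $N_\nu^0$ in $G_{\nu\tau}$ reduces, via the inclusion $G_{\nu\tau}\subset G_\nu$, to the normality of $N_\nu$ in $G_\nu$. Normality of $N^0_{\nu\nu}$ in both $N_\nu^0$ and $G_{\nu\tau}$ follows from the transitivity of these groups on ${\mathcal A}_{\nu\tau}$ combined with~\eqref{eq.15}, which shows that the point-stabilizer of the action is constant along ${\mathcal A}_{\nu\tau}$ and equal to $N^0_{\nu\nu}$; hence every conjugate of $N^0_{\nu\nu}$ equals $N^0_{\nu\nu}$. The topological identification $N_\nu^0/N^0_{\nu\nu}\simeq({\mathfrak n}+{\mathfrak g}_\nu)^\bot$ is then the diffeomorphism~\eqref{eq.25} composed with the canonical homeomorphism of the affine space ${\mathcal A}_{\nu\tau}$ with its direction vector space. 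I expect no genuine obstacle; the work is entirely one of organization, the only point requiring care being to handle the $N_\nu^\mathrm{fin}$- and $G_{\nu\tau}$-versions of each assertion in parallel with their $N_\nu^0$-counterparts rather than reprove them from scratch.
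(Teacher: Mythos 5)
Your proposal is correct and follows the paper's own proof essentially verbatim: the paper's ``proof'' of Proposition~\ref{pr.6} is exactly the discussion preceding it, and you assemble the same ingredients (the identification ${\mathcal A}_{\nu\tau}={\mathcal O}^\sigma(N^0_\nu)$, relations~\eqref{eq.11}, \eqref{eq.23}, \eqref{eq.24}, \eqref{eq.25}, \eqref{eq.29}, \eqref{eq.30} and the constancy of $N^0_\alpha$ from~\eqref{eq.15}) in the same order and for the same purposes. The one place where your wording, taken literally, is wrong is the normality of $N^0_{\nu\nu}$ in $G_{\nu\tau}$: the point-stabilizer of the $G_{\nu\tau}$-action at $\alpha\in{\mathcal A}_{\nu\tau}$ is $G_\alpha$, not $N^0_{\nu\nu}$, so ``the stabilizer is constant and equal to $N^0_{\nu\nu}$'' does not apply to that group. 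The conclusion is still immediate, either by the corrected form of your mechanism --- conjugation by $g\in G_{\nu\tau}$ carries $N^0_\alpha$ to $N^0_{\operatorname{Ad}^*_g\alpha}$, and both equal $N^0_{\nu\nu}$ by~\eqref{eq.15} because $\operatorname{Ad}^*_g\alpha$ stays in ${\mathcal A}_{\nu\tau}$ --- or by the paper's route, which combines normality of $N^0_{\nu\nu}=N^0_\sigma$ in $G_\sigma$ and in $N^0_\nu$ with the decomposition $G_{\nu\tau}=G_\sigma\cdot N^0_\nu$ of~\eqref{eq.31}.
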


\begin{definition}\label{de.7}
The affine subspace
${\mathcal A}_{\nu\tau}=\sigma+({\mathfrak n}+{\mathfrak g}_\nu)^\bot$
contained in the coadjoint orbit
${\mathcal O}^\sigma(G)\subset{\mathfrak g}^*$ and denoted by
${\mathcal A}(\sigma,{\mathfrak n})$,
will be called the isotropic affine subspace associated with the ideal
${\mathfrak n}$ of ${\mathfrak g}$.
\end{definition}

\begin{remark}\label{re.8}
By relations~~\eqref{eq.23}, \eqref{eq.25} and~\eqref{eq.31} for any
$\sigma\in{\mathfrak g}^*$ the following conditions are
equivalent:
1)~${\mathcal A}(\sigma,{\mathfrak n})=\{\sigma\}$;
2)~$\dim{\mathcal A}(\sigma,{\mathfrak n})=0$;
3)~${\mathfrak g}_\nu+{\mathfrak n}={\mathfrak g}$;
4)~${\mathfrak g}_\sigma={\mathfrak g}_{\nu\tau}$;
5)~${\mathfrak n}_\nu\subset{\mathfrak g}_\sigma$.
Here, recall, $\nu=\sigma|{\mathfrak n}$ and
$\tau=\sigma|{\mathfrak g}_\nu$.
\end{remark}

\begin{remark}\label{re.9}
If $N$ is an affine algebraic Lie group, then its adjoint
representation $N\to GL({\mathfrak n})$,
$n\mapsto\operatorname{Ad}_n|{\mathfrak n}$, is a
${\mathbb F}$-morphism. In this case the affine algebraic
group $N_\nu$ always has a finite number of connected
(irreducible) components, and consequently,
$N_\nu^\mathrm{fin}=N_\nu\subset G_{\nu\tau}$.
Then by~\eqref{eq.31}
$G_{\nu\tau}=G_\sigma\cdot N_\nu$ and, consequently,
$G_{\nu\tau}/N_\nu\simeq G_\sigma/N_\sigma$.
We obtain the exact sequence
$$
e\to N_\sigma\to G_\sigma \to G_{\nu\tau}/N_\nu \to e,
$$
which generalizes Rawnsley's exact
sequence~\cite[Eq.(1)]{Ra} in the case of semidirect products.
\end{remark}

\begin{remark}\label{re.10}
The dual space
${\mathfrak n}^*$ is a Poisson manifold with the natural
linear Poisson structure and with the coadjoint orbits as
the corresponding symplectic leaves. Then the
$G$-orbit ${\mathcal O}^\nu(G)$
as the union of such (isomorphic) leaves is a Poisson submanifold of
${\mathfrak n}^*$. The Poisson structure on
${\mathcal O}^\nu(G)$ has constant rank
$\dim{\mathcal O}^\nu(N)$ and by Proposition~\ref{pr.6}
its corank equals
$\dim{\mathcal A}_{\nu\tau}=\dim({\mathfrak
g}_\nu+{\mathfrak n})^\bot$.
\end{remark}

\subsection{Reduced-group orbits and index of a Lie algebra}

We continue with the notation of the previous subsections.
Here we consider the orbit
${\mathcal O}^\tau(G_\nu)\subset{\mathfrak g}^*_\nu$
in more details. We will show that this orbit is the union
of disjoint coadjoint orbits of some reduced Lie algebra.

Indeed, as we remarked above (see~\eqref{eq.27}), the set
${\mathcal O}^\tau(G_\nu)$ consists of the restrictions
$\operatorname{Ad}^*_g\sigma|{\mathfrak g}_\nu$, where
$g\in G_\nu$. But by definition of the Lie group
$G_\nu$ we have
$\operatorname{Ad}^*_g\sigma|{\mathfrak n}=\nu$ for any
$g\in G_\nu$, that is, all elements of the orbit vanish on
the ideal ${\mathfrak n}^\natural_\nu$ of the Lie algebra
${\mathfrak g}_\nu$ (see~\eqref{eq.19}). Consider the
quotient algebra
${\mathfrak b}_\nu={\mathfrak g}_\nu/{\mathfrak n}^\natural_\nu$.
Since the connected subgroup of
$G_\nu$ corresponding to the subalgebra
${\mathfrak n}^\natural_\nu$ is not necessarily closed in
$G_\nu$, we will describe the coadjoint orbits of
${\mathfrak b}_\nu$ in terms of the Lie group
$G_\nu$.

Let ${\pi_\nu}:{\mathfrak g}_\nu\to{\mathfrak b}_\nu$
be the canonical homomorphism. The dual map
${\pi_\nu}^*:{\mathfrak b}_\nu^*\to{\mathfrak g}_\nu^*$
is a linear embedding and identifies the dual space
${\mathfrak b}_\nu^*$ naturally with the annihilator
$({\mathfrak n}^\natural_\nu)^{\bot_\nu}\subset {\mathfrak g}_\nu^*$
of ${\mathfrak n}^\natural_\nu$ in
${\mathfrak g}_\nu^*$. By Lemma~\ref{le.2} and by
relation~\eqref{eq.27} the set
\begin{equation}\label{eq.34}
{\mathcal O}^\tau=\{(\operatorname{Ad}_g^*\sigma)|{\mathfrak g}_\nu,
g\in G^0_\nu\}
\subset {\mathfrak b}_\nu^*\subset{\mathfrak g}^*_\nu
\end{equation}
is a coadjoint orbit in
${\mathfrak b}_\nu^*=({\mathfrak n}^\natural_\nu)^{\bot_\nu}$
passing through the element
$\tau\in{\mathfrak b}_\nu^*\subset{\mathfrak g}^*_\nu$.

In particular, ${\mathcal O}^\tau(G_\nu)$ is the union of disjoint
coadjoint orbits of the reduced Lie algebra
${\mathfrak b}_\nu$. This orbit
${\mathcal O}^\tau(G_\nu)$ will be called a reduced-group
orbit. Remark here that this group and this orbit are the
analog of Rawnsley's the little-group and the little-group
orbit in the case of semidirect products (see~\cite{Ra})).
Our term "reduced" is motivated by the reduction by stages
procedure of Marsden-Misio\l ek-Ortega-Perlmutter-Ratiu
(see~\cite{MMPR} and \cite{MMOPR}), where the
one-dimensional central extension of the quotient group
$G^0_\nu/N^0_\nu$ (with the Lie algebra
${\mathfrak b}_\nu$) is a natural symmetry group for the
second step of the reduction procedure (see also
Remark~\ref{re.20}).

By~\eqref{eq.32} we can replace
$\dim{\mathfrak n}_\nu-\dim{\mathfrak n}_\sigma$
in the left hand side of identity~\eqref{eq.17} by
$\dim {\mathfrak g}_{\nu\tau}-\dim{\mathfrak g}_\sigma$.
Therefore after simple rearrangements we obtain the identity
\begin{equation}\label{eq.35}
\begin{split}
\dim{\mathfrak g}_\sigma
&=[\dim{\mathfrak n}-\dim({\mathfrak g}/{\mathfrak g}_\nu)]
+\dim({\mathfrak g}_{\nu\tau}/{\mathfrak n}_\nu)\\
&=[\dim{\mathfrak n}-\dim({\mathfrak g}/{\mathfrak g}_\nu)]
+[\dim({\mathfrak g}_{\nu\tau}/{\mathfrak n}^\natural_\nu)
-\dim({\mathfrak n}_\nu/{\mathfrak n}^\natural_\nu)],
\end{split}
\end{equation}
where we recall that
$\sigma\in{\mathfrak g}^*$ is an arbitrary element and
$\nu=\sigma|{\mathfrak n}$ and
$\tau=\sigma|{\mathfrak g}_\nu$.

Let $\mu\in{\mathfrak n}^*$. Because
$\mu|{\mathfrak n}_\mu=0$ (i.e. $\mu\in{\mathfrak n}_\mu^\bot$)
if and only if
$\mu\in\operatorname{ad}^*_{\mathfrak n}\mu$,
and the function
$\mu\mapsto\dim({\mathbb F}\mu+\operatorname{ad}^*_{\mathfrak n}\mu)$
is lower semi-continuous on $R({\mathfrak n}^*)$, the set
$R^\natural({\mathfrak n}^*)
=\{\mu\in R({\mathfrak n}^*): \dim({\mathfrak n}_\mu
/{\mathfrak n}^\natural_\mu)=1\}$ is a
Zariski open subset of ${\mathfrak n}^*$.
Put $\delta^\natural({\mathfrak n})=1$ if this set is not empty,
and $\delta^\natural({\mathfrak n})=0$ otherwise.

\begin{remark}\label{re.}
If $R^\natural({\mathfrak n}^*)=\emptyset$ then
$\mu\in\operatorname{ad}^*_{\mathfrak n}\mu$ for all
$\mu\in R({\mathfrak n}^*)$ and, consequently,
the each coadjoint orbit of ${\mathfrak n}$ consisting
of ${\mathfrak n}$-regular elements with arbitrary
its element $\mu$ contains the set $\{z\mu\}$,
where $z\not=0$ if ${\mathbb F}={\mathbb C}$ and
$z>0$ if ${\mathbb F}={\mathbb R}$.
This follows from the fact that the coadjoint orbits in
$R({\mathfrak n}^*)\subset{\mathfrak n}^*$
are defined uniquely by the integrable vector subbundle
$\mu\mapsto\operatorname{ad}^*_{\mathfrak n}\mu$,
$\mu\in R({\mathfrak n}^*)$ (of constant corank
$\operatorname{ind}{\mathfrak n}$)
of the tangent bundle $TR({\mathfrak n}^*)$. It is clear that
$R^\natural({\mathfrak n}^*)\not=\emptyset$
($\delta^\natural({\mathfrak n})=1$) if the
algebra ${\mathfrak n}$ is semisimple
and $R^\natural({\mathfrak n}^*)=\emptyset$
($\delta^\natural({\mathfrak n})=0$) if
${\mathfrak n}$ is a Frobenius Lie algebra,
i.e. $\operatorname{ind}{\mathfrak n}=0$.
\end{remark}

Suppose that $\nu|{\mathfrak n}_\nu\not=0$, i.e
$\dim({\mathfrak n}_\nu/{\mathfrak n}^\natural_\nu)=1$.
In this case the extension
${\mathfrak b}_\nu={\mathfrak g}_\nu/{\mathfrak n}^\natural_\nu$
\begin{equation}\label{eq.36}
0\to{\mathfrak n}_\nu/{\mathfrak n}^\natural_\nu
\to{\mathfrak g}_\nu/{\mathfrak n}^\natural_\nu
\to{\mathfrak g}_\nu/{\mathfrak n}_\nu\to0
\end{equation}
is a one-dimensional central extension of the quotient algebra
${\mathfrak g}_\nu/{\mathfrak n}_\nu$. Let
${\mathcal B}_\nu$ be the image of the set
${\mathcal A}_\nu\subset{\mathfrak g}^*$
under the restriction map
\begin{equation}\label{eq.37}
{\Pi_{2}^{\mathfrak g}}:{\mathfrak g}^*\to{\mathfrak g}^*_\nu,
\quad \beta\mapsto\beta|{\mathfrak g}_\nu.
\end{equation}
Put ${\mathcal B}_0={\Pi_{2}^{\mathfrak g}}({\mathfrak n}^\bot)$.
It is easy to see that
${\mathcal B}_\nu\subset({\mathfrak n}^\natural_\nu)^{\bot_\nu}$,
where
$({\mathfrak n}^\natural_\nu)^{\bot_\nu}={\mathfrak b}^*_\nu$,
and ${\mathcal B}_0=({\mathfrak n}_\nu)^{\bot_\nu}$.
By dimension arguments,
$\dim{\mathcal B}_\nu=\dim({\mathfrak g}_\nu/{\mathfrak n}_\nu)$,
i.e. ${\mathcal B}_\nu$ is an affine subspace of codimension one
in ${\mathfrak b}^*_\nu$ and therefore
\begin{equation}\label{eq.38}
{\mathcal B}_\nu=\{\tilde\tau\in({\mathfrak n}^\natural_\nu)^{\bot_\nu}:
\tilde\tau|{\mathfrak n}_\nu=\nu|{\mathfrak n}_\nu\}
=\tau+({\mathfrak n}_\nu)^{\bot_\nu}.
\end{equation}
\begin{remark}\label{re.12}
The restriction
${\Pi_{2}^{\mathfrak g}}|{\mathcal A}_\nu$ of the linear map
${\Pi_{2}^{\mathfrak g}}$ is a bundle with the total space
${\mathcal A}_\nu$, the affine space
${\mathcal B}_\nu$ as its base and the space
$({\mathfrak g}_\nu+{\mathfrak n})^\bot\subset{\mathfrak g}^*$
as its fibre.
\end{remark}
By~\eqref{eq.8} and by
$G_\nu$-equivariance of the map
${\Pi_{2}^{\mathfrak g}}$ the space
${\mathcal B}_\nu$ is $G_\nu$-invariant
and is the union of coadjoint orbits of the Lie algebra
${\mathfrak b}_\nu$ and ${\mathfrak g}_\nu$
simultaneously (see expressions~\eqref{eq.27}
and~\eqref{eq.34}). Moreover, the union of disjoint
$G_\nu$-invariant affine subspaces
$\lambda{\mathcal B}_\nu={\mathcal B}_{\lambda\nu}$,
$\lambda\not=0$ of
${\mathfrak b}_\nu^*$ is an open dense subset in
${\mathfrak b}_\nu^*$:
\begin{equation}\label{eq.39}
{\Pi_{2}^{\mathfrak g}}\bigl(\bigcup_{\lambda\in
{\mathbb F}\setminus\{0\}}
\lambda{\mathcal A}_\nu\bigr)
=\bigcup_{\lambda\in{\mathbb F}\setminus\{0\}}\lambda{\mathcal B}_\nu
={\mathfrak b}_\nu^*\setminus{\mathcal B}_0,
\quad \dim{\mathfrak b}_\nu^*-\dim({\mathfrak n}_\nu)^{\bot_\nu}=1.
\end{equation}
Thus ${\mathcal B}_\nu$ and each of these affine spaces
$\lambda{\mathcal B}_\nu$ contain coadjoint orbits of the
Lie algebra ${\mathfrak b}_\nu$ of maximal dimension (as usual for
one-dimensional central extensions). Now as an immediate
consequence of identity~\eqref{eq.35} we obtain

\begin{theorem}\label{th.13}
Let ${\mathfrak g}$ be a Lie algebra over the field
${\mathbb F}$ and ${\mathfrak n}$ be its non-zero ideal.
Let $\nu$ be an element of $R^\natural({\mathfrak n}^*)$
if $R^\natural({\mathfrak n}^*)\not=\emptyset$ or
an element of $R({\mathfrak n}^*)$
if $R^\natural({\mathfrak n}^*)=\emptyset$ and such that
${\mathcal A}_\nu\cap R({\mathfrak g}^*)\not=\emptyset$.
Then $\operatorname{ind}{\mathfrak g}
=\operatorname{ind}({\mathfrak g},{\mathfrak n})
+(\operatorname{ind}{\mathfrak b}_\nu-\delta^\natural({\mathfrak n}))$,
where ${\mathfrak b}_\nu={\mathfrak g}_\nu/{\mathfrak n}^\natural_\nu$
and the ideal
${\mathfrak n}^\natural_\nu=\ker(\nu|{\mathfrak n}_\nu)$.
For any $\sigma\in{\mathfrak g}^*$ and
$\tau\in{\mathfrak g}^*_\nu$ such that
$\sigma|{\mathfrak n}=\nu$ and
$\sigma|{\mathfrak g}_\nu=\tau$ the element $\sigma $ is
${\mathfrak g}$-regular if and only if the element $\tau $ is
${\mathfrak b}_\nu$-regular.
\end{theorem}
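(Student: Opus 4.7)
My plan is to deduce both claims of the theorem from a single application of identity~\eqref{eq.35}, after matching each of the three summands on the right-hand side to an invariant appearing in the index formula.

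First I would fix $\nu$ as in the hypothesis, take an arbitrary $\sigma\in\mathcal{A}_\nu$, and set $\tau=\sigma|\mathfrak{g}_\nu$. Identity~\eqref{eq.35} then reads
\[
\dim\mathfrak{g}_\sigma = \bigl[\dim\mathfrak{n}-\dim(\mathfrak{g}/\mathfrak{g}_\nu)\bigr] + \dim(\mathfrak{g}_{\nu\tau}/\mathfrak{n}^\natural_\nu) - \dim(\mathfrak{n}_\nu/\mathfrak{n}^\natural_\nu),
\]
in which only $\dim(\mathfrak{g}_{\nu\tau}/\mathfrak{n}^\natural_\nu)$ depends on $\sigma$ once $\nu$ is fixed. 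I would then identify the three summands. The first bracket equals $\dim\mathfrak{n}_\sigma$ by~\eqref{eq.13} and, for $\nu$ chosen as in the hypothesis, agrees with the invariant denoted $\operatorname{ind}(\mathfrak{g},\mathfrak{n})$ in the statement (the generic codimension of a $G$-orbit in $\mathfrak{n}^*$). Under the identification $\mathfrak{b}_\nu^*\cong(\mathfrak{n}^\natural_\nu)^{\bot_\nu}$ made in~\eqref{eq.34}, the quotient $\mathfrak{g}_{\nu\tau}/\mathfrak{n}^\natural_\nu$ is exactly the $\mathfrak{b}_\nu$-coadjoint stabilizer $(\mathfrak{b}_\nu)_\tau$ of $\tau$. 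Finally $\dim(\mathfrak{n}_\nu/\mathfrak{n}^\natural_\nu)$ equals $\delta^\natural(\mathfrak{n})$ in both cases of the hypothesis on $\nu$: if $R^\natural(\mathfrak{n}^*)\neq\emptyset$ then $\nu\in R^\natural$ forces it to be $1$, and if $R^\natural(\mathfrak{n}^*)=\emptyset$ then it must be $0$.

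With these substitutions the identity becomes
\[
\dim\mathfrak{g}_\sigma = \operatorname{ind}(\mathfrak{g},\mathfrak{n}) - \delta^\natural(\mathfrak{n}) + \dim(\mathfrak{b}_\nu)_\tau,
\]
holding for every $\sigma\in\mathcal{A}_\nu$ with $\tau=\sigma|\mathfrak{g}_\nu$. Since only $\dim(\mathfrak{b}_\nu)_\tau$ varies with $\sigma$, the minimum of $\dim\mathfrak{g}_\sigma$ over $\mathcal{A}_\nu$ is attained exactly when $\tau$ is $\mathfrak{b}_\nu$-regular. By Remark~\ref{re.12} the image $\mathcal{B}_\nu$ of $\mathcal{A}_\nu$ under $\Pi_2^{\mathfrak{g}}$ is the affine subspace of $\mathfrak{b}_\nu^*$ over which $\tau$ ranges, and the paragraph preceding the theorem guarantees $\mathcal{B}_\nu\cap R(\mathfrak{b}_\nu^*)\neq\emptyset$. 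Combined with the hypothesis $\mathcal{A}_\nu\cap R(\mathfrak{g}^*)\neq\emptyset$, this shows the minimum is in fact $\operatorname{ind}\mathfrak{g}$, yielding the index formula; and the strict monotonicity of the identity in $\dim(\mathfrak{b}_\nu)_\tau$ simultaneously gives the equivalence $\sigma\in R(\mathfrak{g}^*)\Leftrightarrow\tau\in R(\mathfrak{b}_\nu^*)$.

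The main obstacle is the auxiliary geometric fact that $\mathcal{B}_\nu$ contains $\mathfrak{b}_\nu$-regular elements when $\delta^\natural(\mathfrak{n})=1$; in that case $\mathcal{B}_\nu$ is a proper affine hyperplane in $\mathfrak{b}_\nu^*$, and the Zariski-openness of $R(\mathfrak{b}_\nu^*)$ is not by itself sufficient. Resolving it requires the one-dimensional central-extension structure~\eqref{eq.36} of $\mathfrak{b}_\nu$ together with the covering~\eqref{eq.39}, which fill the Zariski-dense complement of $\mathcal{B}_0$ by parallel hyperplanes $\lambda\mathcal{B}_\nu$ and thereby force each such hyperplane, in particular $\mathcal{B}_\nu$ itself, to host coadjoint orbits of maximal dimension.
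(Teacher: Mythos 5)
Your proposal is correct and follows essentially the same route as the paper, which presents the theorem as an immediate consequence of identity~\eqref{eq.35} after the preceding discussion identifies $\dim({\mathfrak n}_\nu/{\mathfrak n}^\natural_\nu)$ with $\delta^\natural({\mathfrak n})$ and uses the central-extension structure~\eqref{eq.36} together with the covering~\eqref{eq.39} to guarantee that ${\mathcal B}_\nu$ meets $R({\mathfrak b}_\nu^*)$. Your write-up merely makes explicit the term-by-term matching and the minimization over ${\mathcal A}_\nu$ that the paper leaves implicit.
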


\begin{remark}\label{re.14}
If $\nu|{\mathfrak n}_\nu\not=0$ then the one-dimensional
algebra ${\mathfrak n}_\nu/{\mathfrak n}^\natural_\nu$
is a subalgebra of the center of
${\mathfrak g}_\nu/{\mathfrak n}^\natural_\nu$.
Fix some element $z\in{\mathfrak n}_\nu$ such that
$\nu(z)=1$ and a splitting
$\ker\tau={\mathfrak s}_\nu\dotplus{\mathfrak n}^\natural_\nu$
of the kernel of $\tau\in{\mathfrak g}_\nu^*$. It is clear that
${\mathfrak g}_\nu={\mathfrak s}_\nu\dotplus{\mathbb F}
z\dotplus{\mathfrak n}^\natural_\nu$
and for arbitrary $\xi,\eta\in{\mathfrak g}_\nu$ the
${\mathbb F} z$-component of the commutator
$[\xi,\eta]$ is the vector $\tau([\xi,\eta])z$.
In other words, the central extension~\eqref{eq.36} is
determined by the map
${\mathfrak g}_\nu\times{\mathfrak g}_\nu\to{\mathbb F}$,
$(\xi,\eta)\mapsto\langle\tau,[\xi,\eta]\rangle$ on
${\mathfrak g}_\nu$ which factorizes to the cocycle
$\gamma_\tau$ on
${\mathfrak g}_\nu/{\mathfrak n}_\nu$ (by~\eqref{eq.19}
$[{\mathfrak g}_\nu,{\mathfrak n}_\nu]
\subset{\mathfrak n}^\natural_\nu$
and by definition $\tau|{\mathfrak n}_\nu=\nu|{\mathfrak n}_\nu$).
If $\nu|{\mathfrak n}_\nu=0$ then the map
$(\xi,\eta)\mapsto\langle\tau,[\xi,\eta]\rangle$ on
${\mathfrak g}_\nu$ factorizes to the trivial cocycle
$\gamma_\tau$ on the quotient algebra
${\mathfrak g}_\nu/{\mathfrak n}_\nu$. Note that the cocycle
$\gamma_\tau$ coincides with the cocycle constructed by
Panasyuk in~\cite{Pa} using direct calculations
and is independent of the choice of the extension $\tau$
of the covector $\nu|{\mathfrak n}_\nu$~\cite[Lemma 2.1]{Pa}.
\end{remark}

The dual space of a Lie algebra is a Poisson manifold with
the natural linear Poisson structure induced by the commutator
$[\cdot,\cdot]$. The coadjoint orbits in this space are the
corresponding symplectic leaves of the Poisson structure.
Since the affine subspace
${\mathcal B}_\nu$ is the union of the $G^0_\nu$-orbits in
${\mathfrak b}^*_\nu$ (the symplectic leaves),
${\mathcal B}_\nu$ is a Poisson submanifold of
${\mathfrak b}^*_\nu$. Fixing the origin
$\tau\in{\mathcal B}_\nu$ to identify
${\mathcal B}_\nu$ with the dual space
$({\mathfrak g}_\nu/{\mathfrak n}_\nu)^*=({\mathfrak n}_\nu)^{\bot_\nu}$
(see~\eqref{eq.38}), we fix some ``affine'' Poisson structure
$\eta_\tau$ on $({\mathfrak g}_\nu/{\mathfrak n}_\nu)^*$.
Remark that if the central extension~\eqref{eq.36} of the Lie algebra
${\mathfrak g}_\nu/{\mathfrak n}_\nu$
is trivial, this Poisson structure is equivalent to the
natural linear Poisson structure
$\eta_\mathrm{can}$ on the dual space to the Lie algebra
${\mathfrak g}_\nu/{\mathfrak n}_\nu$
(there will be a natural origin in
${\mathcal B}_\nu$). If $\nu|{\mathfrak n}_\nu=0$
then the (trivial) cocycle $\gamma_\tau$ determines
the trivial one-dimensional central extension of
${\mathfrak g}_\nu/{\mathfrak n}_\nu$ and, consequently,
the new Poisson structure $\eta_\tau\simeq\eta_\mathrm{can}$ on
$({\mathfrak g}_\nu/{\mathfrak n}_\nu)^*$.

Determine the index
$\operatorname{ind}({\mathfrak g}_\nu/{\mathfrak n}_\nu,\eta_\tau)$
of the Poisson structure
$\eta_\tau$ on $({\mathfrak g}_\nu/{\mathfrak n}_\nu)^*$
as the codimension of the symplectic leaf of the maximal
dimension in
$({\mathfrak g}_\nu/{\mathfrak n}_\nu)^*$.
It is clear that
$\operatorname{ind}({\mathfrak g}_\nu/{\mathfrak n}_\nu,\eta_\tau)
=\operatorname{ind}{\mathfrak b}_\nu-1$ if $\nu|{\mathfrak n}_\nu\not=0$
because $\operatorname{codim}{\mathcal B}_\nu$ in
${\mathfrak b}_\nu$ equals
$1$ and the symplectic leaves are coadjoint orbits of
${\mathfrak b}_\nu$.
Also $\operatorname{ind}({\mathfrak g}_\nu/{\mathfrak n}_\nu,\eta_\tau)
=\operatorname{ind}{\mathfrak b}_\nu$ if $\nu|{\mathfrak n}_\nu=0$
because in this case
${\mathfrak b}_\nu={\mathfrak g}_\nu/{\mathfrak n}_\nu$ and
$\eta_\tau\simeq\eta_\mathrm{can}$. We obtain the following
assertion of Panasyuk~\cite[Th. 2.7]{Pa}:
\begin{corollary}[Panasyuk's formula]\label{co.15}
Let ${\mathfrak g}$ be a Lie algebra and
${\mathfrak n}$ be its ideal. Then
$$
\operatorname{ind}{\mathfrak g}=\operatorname{ind}({\mathfrak g},
{\mathfrak n})+\operatorname{ind}({\mathfrak g}_\nu/{\mathfrak n}_\nu,
\eta_\tau),
$$
where $\nu\in{\mathfrak n}^*$ is a generic element.
\end{corollary}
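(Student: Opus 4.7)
The plan is to derive this corollary essentially by book-keeping from Theorem~\ref{th.13}, translating the combination $\operatorname{ind}{\mathfrak b}_\nu-\delta^\natural({\mathfrak n})$ into the index of the affine Poisson structure $\eta_\tau$ on $({\mathfrak g}_\nu/{\mathfrak n}_\nu)^*$. First I would fix a ``generic'' $\nu\in{\mathfrak n}^*$ in the precise sense required by Theorem~\ref{th.13}: I take $\nu\in R^\natural({\mathfrak n}^*)$ when that Zariski open set is non-empty, and $\nu\in R({\mathfrak n}^*)$ otherwise, subject moreover to ${\mathcal A}_\nu\cap R({\mathfrak g}^*)\neq\emptyset$. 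Because $R({\mathfrak n}^*)$ and $R^\natural({\mathfrak n}^*)$ are Zariski open and the map $\beta\mapsto\beta|{\mathfrak n}$ sends $R({\mathfrak g}^*)$ into a dense subset of ${\mathfrak n}^*$, all of these requirements are simultaneously realisable on a Zariski open subset of ${\mathfrak n}^*$, which is what the word ``generic'' means in the statement. Theorem~\ref{th.13} then yields
\begin{equation*}
\operatorname{ind}{\mathfrak g}
=\operatorname{ind}({\mathfrak g},{\mathfrak n})
+\bigl(\operatorname{ind}{\mathfrak b}_\nu-\delta^\natural({\mathfrak n})\bigr),
\end{equation*}
so it remains to identify $\operatorname{ind}{\mathfrak b}_\nu-\delta^\natural({\mathfrak n})$ with $\operatorname{ind}({\mathfrak g}_\nu/{\mathfrak n}_\nu,\eta_\tau)$.

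Next I split into the two cases already singled out in the discussion preceding the corollary. If $\nu|{\mathfrak n}_\nu\neq 0$, then $\delta^\natural({\mathfrak n})=1$ and \eqref{eq.36} exhibits ${\mathfrak b}_\nu={\mathfrak g}_\nu/{\mathfrak n}^\natural_\nu$ as a one-dimensional central extension of ${\mathfrak g}_\nu/{\mathfrak n}_\nu$. Using \eqref{eq.38} and the open-density statement \eqref{eq.39}, the affine hyperplane ${\mathcal B}_\nu\subset{\mathfrak b}_\nu^*$ is a $G_\nu$-invariant Poisson submanifold of codimension one that meets coadjoint orbits of ${\mathfrak b}_\nu$ of maximal dimension; identifying ${\mathcal B}_\nu$ with $({\mathfrak g}_\nu/{\mathfrak n}_\nu)^*$ via the origin $\tau$ puts the Poisson structure $\eta_\tau$ on $({\mathfrak g}_\nu/{\mathfrak n}_\nu)^*$, whose symplectic leaves are precisely the coadjoint orbits of ${\mathfrak b}_\nu$ lying in ${\mathcal B}_\nu$. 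Consequently the index of $\eta_\tau$ equals the codimension of a maximal-dimension ${\mathfrak b}_\nu$-orbit inside the hyperplane ${\mathcal B}_\nu$, which is $\operatorname{ind}{\mathfrak b}_\nu-1$. If instead $\nu|{\mathfrak n}_\nu=0$, then $\delta^\natural({\mathfrak n})=0$, ${\mathfrak n}^\natural_\nu={\mathfrak n}_\nu$, ${\mathfrak b}_\nu={\mathfrak g}_\nu/{\mathfrak n}_\nu$, and by Remark~\ref{re.14} the cocycle $\gamma_\tau$ is trivial, so $\eta_\tau\simeq\eta_{\mathrm{can}}$ and $\operatorname{ind}({\mathfrak g}_\nu/{\mathfrak n}_\nu,\eta_\tau)=\operatorname{ind}{\mathfrak b}_\nu$. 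In both cases $\operatorname{ind}{\mathfrak b}_\nu-\delta^\natural({\mathfrak n})=\operatorname{ind}({\mathfrak g}_\nu/{\mathfrak n}_\nu,\eta_\tau)$, and Panasyuk's formula follows on substitution.

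The only non-cosmetic point is the identification made in the first case: that the index of the affine Poisson structure $\eta_\tau$ on the slice ${\mathcal B}_\nu$ is exactly one less than the index of the ambient Lie algebra ${\mathfrak b}_\nu$. I expect this to be the main obstacle worth spelling out carefully, since it rests on the non-trivial fact, used implicitly in the paragraph before the corollary, that the hyperplane ${\mathcal B}_\nu\subset{\mathfrak b}_\nu^*$ does carry ${\mathfrak b}_\nu$-orbits of maximal dimension; this is standard for one-dimensional central extensions and follows from \eqref{eq.39} together with the fact that $\lambda{\mathcal B}_\nu={\mathcal B}_{\lambda\nu}$ sweep out an open dense subset of ${\mathfrak b}_\nu^*$. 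Everything else is a straightforward combination of Theorem~\ref{th.13} and the definitions of $\eta_\tau$ and $\delta^\natural({\mathfrak n})$.
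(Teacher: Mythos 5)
Your proposal is correct and follows essentially the same route as the paper: apply Theorem~\ref{th.13} to a generic $\nu$ and then convert $\operatorname{ind}{\mathfrak b}_\nu-\delta^\natural({\mathfrak n})$ into $\operatorname{ind}({\mathfrak g}_\nu/{\mathfrak n}_\nu,\eta_\tau)$ by the same two-case analysis (the codimension-one invariant hyperplane ${\mathcal B}_\nu$ meeting maximal-dimension ${\mathfrak b}_\nu$-orbits via~\eqref{eq.39} when $\nu|{\mathfrak n}_\nu\not=0$, and $\eta_\tau\simeq\eta_{\mathrm{can}}$ when $\nu|{\mathfrak n}_\nu=0$). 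The paper presents exactly this identification in the paragraph preceding the corollary, so no further commentary is needed.
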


Remark that Theorem~\ref{th.13} defines more precisely the
notion of the set of ``generic elements''. This set is
defined in~\cite{Pa} indirectly as an open dense subset in
${\mathfrak n}^*$ on which the function
$\nu\mapsto \operatorname{ind}({\mathfrak g}_\nu/{\mathfrak
n}_\nu,\eta_\gamma)$
is constant.

Assume that the ideal
${\mathfrak n}$ is Abelian and there exists a complementary
to ${\mathfrak n}$ subalgebra
${\mathfrak k}\subset{\mathfrak g}$, i.e.
${\mathfrak g}={\mathfrak k}\dotplus{\mathfrak n}$.
Then the Lie algebra ${\mathfrak g}$ is a semi-direct product of
${\mathfrak k}$ and the Abelian ideal
${\mathfrak n}$. It is evident that
${\mathfrak n}_\nu={\mathfrak n}$ for any non-zero
$\nu\in{\mathfrak n}^*$ and since
${\mathfrak n}_\nu\subset{\mathfrak g}_\nu$,
the isotropy subalgebra
${\mathfrak g}_\nu={\mathfrak k}_\nu\dotplus{\mathfrak n}$,
where ${\mathfrak k}_\nu={\mathfrak k}\cap{\mathfrak g}_\nu$.
But by~\eqref{eq.19}
$[{\mathfrak k}_\nu,{\mathfrak
n}^\natural_\nu]\subset{\mathfrak n}^\natural_\nu$,
where ${\mathfrak n}^\natural_\nu=\ker\nu$
and, consequently, the algebra
${\mathfrak b}_\nu={\mathfrak g}_\nu/{\mathfrak
n}^\natural_\nu =({\mathfrak k}_\nu+{\mathfrak
n}^\natural_\nu)/{\mathfrak n}^\natural_\nu\dotplus
{\mathfrak n}_\nu/{\mathfrak n}^\natural_\nu$
is a trivial one-dimensional central extension of
${\mathfrak k}_\nu\simeq {\mathfrak g}_\nu/{\mathfrak n}_\nu$.
Therefore
$\operatorname{ind}{\mathfrak
k}_\nu=\operatorname{ind}{\mathfrak b}_\nu-1$.
Note that
$\operatorname{ind}({\mathfrak g},{\mathfrak
n})=\operatorname{ind}({\mathfrak k},{\mathfrak n})$
because ${\mathfrak n}$ is Abelian. Since an element
$\nu\in{\mathfrak n}^*$ is
${\mathfrak g}$-regular if and only if this element is
${\mathfrak k}$-regular, we obtain the following assertion
of Ra\"{\i}s~\cite{Rais}:
\begin{corollary}[Ra\"{\i}s' formula]
Let the Lie algebra ${\mathfrak g}$ be a semi-direct product of
${\mathfrak k}$ and the Abelian ideal
${\mathfrak n}$. Let $\nu\in{\mathfrak n}^*$ be a
${\mathfrak k}$-regular element for which there exists a
${\mathfrak g}$-regular element
$\sigma\in{\mathfrak g}^*$ such that
$\sigma|{\mathfrak n}=\nu$. Then
$\operatorname{ind}{\mathfrak
g}=\operatorname{ind}({\mathfrak k},{\mathfrak
n})+\operatorname{ind}{\mathfrak k}_\nu$.
\end{corollary}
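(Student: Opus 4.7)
The plan is to deduce Raïs' formula as a direct specialization of Theorem~\ref{th.13} (equivalently, Corollary~\ref{co.15}) to the situation where $\mathfrak{n}$ is an Abelian ideal admitting a complementary subalgebra $\mathfrak{k}$. All the ingredients are spelled out in the paragraph immediately preceding the statement; the task is to assemble them.

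First, I would check that $\delta^\natural(\mathfrak{n})=1$. Because $\mathfrak{n}$ is Abelian, the isotropy algebra $\mathfrak{n}_\mu$ equals $\mathfrak{n}$ for every $\mu\in\mathfrak{n}^*$, and $\mathfrak{n}_\mu^\natural=\ker(\mu|\mathfrak{n}_\mu)=\ker\mu$. Hence $\dim(\mathfrak{n}_\mu/\mathfrak{n}_\mu^\natural)=1$ for every non-zero $\mu$, so the whole punctured dual lies in $R^\natural(\mathfrak{n}^*)$ and $\delta^\natural(\mathfrak{n})=1$. The auxiliary hypothesis $\mathcal{A}_\nu\cap R(\mathfrak{g}^*)\neq\emptyset$ required by Theorem~\ref{th.13} is automatic from the existence of the $\mathfrak{g}$-regular $\sigma$ with $\sigma|\mathfrak{n}=\nu$.

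Second, I would compute the two remaining ingredients appearing in the formula of Theorem~\ref{th.13}. Setting $\mathfrak{k}_\nu=\mathfrak{k}\cap\mathfrak{g}_\nu$, the decomposition $\mathfrak{g}=\mathfrak{k}\dotplus\mathfrak{n}$ gives $\mathfrak{g}_\nu=\mathfrak{k}_\nu\dotplus\mathfrak{n}$, while~\eqref{eq.19} yields $[\mathfrak{k}_\nu,\mathfrak{n}]\subset\ker\nu=\mathfrak{n}_\nu^\natural$. These relations force the reduced algebra
$$\mathfrak{b}_\nu=\mathfrak{g}_\nu/\mathfrak{n}_\nu^\natural\;\cong\;(\mathfrak{k}_\nu+\mathfrak{n}_\nu^\natural)/\mathfrak{n}_\nu^\natural\;\dotplus\;\mathfrak{n}_\nu/\mathfrak{n}_\nu^\natural$$
to split as a Lie-algebra direct sum of a copy of $\mathfrak{k}_\nu$ and a one-dimensional central ideal, whence $\operatorname{ind}\mathfrak{b}_\nu=\operatorname{ind}\mathfrak{k}_\nu+1$. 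Moreover, since $\mathfrak{n}$ is Abelian the $\rho^*$-action of $\mathfrak{g}$ on $\mathfrak{n}^*$ factors through $\mathfrak{g}/\mathfrak{n}\cong\mathfrak{k}$, so $\mathfrak{g}_\mu=\mathfrak{k}_\mu\dotplus\mathfrak{n}$ for all $\mu$; this gives at once $\operatorname{ind}(\mathfrak{g},\mathfrak{n})=\operatorname{ind}(\mathfrak{k},\mathfrak{n})$ and the equivalence of $\mathfrak{g}$- and $\mathfrak{k}$-regularity on $\mathfrak{n}^*$.

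Substituting into Theorem~\ref{th.13} then yields
$$\operatorname{ind}\mathfrak{g}=\operatorname{ind}(\mathfrak{g},\mathfrak{n})+\operatorname{ind}\mathfrak{b}_\nu-\delta^\natural(\mathfrak{n})=\operatorname{ind}(\mathfrak{k},\mathfrak{n})+(\operatorname{ind}\mathfrak{k}_\nu+1)-1=\operatorname{ind}(\mathfrak{k},\mathfrak{n})+\operatorname{ind}\mathfrak{k}_\nu,$$
as required. No step presents a real obstacle once Theorem~\ref{th.13} is in hand; the only slightly delicate point is verifying that the splitting of $\mathfrak{b}_\nu$ is a direct sum of \emph{Lie algebras} and not merely of vector spaces, which is exactly what the containment $[\mathfrak{k}_\nu,\mathfrak{n}]\subset\mathfrak{n}_\nu^\natural$ guarantees.
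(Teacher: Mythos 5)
Your proposal is correct and follows essentially the same route as the paper: the paragraph preceding the corollary derives it from Theorem~\ref{th.13} by exactly your observations that ${\mathfrak n}_\nu={\mathfrak n}$, ${\mathfrak g}_\nu={\mathfrak k}_\nu\dotplus{\mathfrak n}$, that ${\mathfrak b}_\nu$ is a trivial one-dimensional central extension of ${\mathfrak k}_\nu$ (so $\operatorname{ind}{\mathfrak b}_\nu=\operatorname{ind}{\mathfrak k}_\nu+1$), and that $\operatorname{ind}({\mathfrak g},{\mathfrak n})=\operatorname{ind}({\mathfrak k},{\mathfrak n})$ with ${\mathfrak g}$- and ${\mathfrak k}$-regularity on ${\mathfrak n}^*$ coinciding. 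Your explicit verification that $\delta^\natural({\mathfrak n})=1$ and that ${\mathcal A}_\nu\cap R({\mathfrak g}^*)\not=\emptyset$ only makes explicit what the paper leaves implicit.
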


\subsection{The bundle of reduced-group orbits}

We retain to the general case when
${\mathfrak n}$ is an arbitrary ideal of ${\mathfrak g}$ and
$\sigma$ is an arbitrary element of
${\mathfrak g}^*$. Any element
${\hat\sigma}\in{\mathfrak g}^*$ determines a pair
$({\hat\nu},{\hat\tau})$, where
${\hat\nu}={\hat\sigma}|{\mathfrak n}$ and
${\hat\tau}={\hat\sigma}|{\mathfrak g}_{\hat\nu}$.
Such a pair is denoted by
${\Pi_{12}^{\mathfrak g}}({\hat\sigma})$. By the definition,
${\Pi_{12}^{\mathfrak
g}}({\hat\sigma}_1)={\Pi_{12}^{\mathfrak g}}({\hat\sigma}_2)$
if and only if
${\hat\sigma}_1,{\hat\sigma}_2\in {\mathcal A}_{{\hat\nu}{\hat\tau}}$
for some ${\hat\nu}\in{\mathfrak n}^*$ and
${\hat\tau}\in{\mathfrak g}^*_{\hat\nu}$.
In this case the elements
${\hat\sigma}_1,{\hat\sigma}_2$ belong to the same
$\operatorname{Ad}^*(G)$-orbit ${\mathcal O}$ in
${\mathfrak g}^*$ because the set
${\mathcal A}_{{\hat\nu}{\hat\tau}}$
is an orbit of the Lie subgroup
$N^0_{\hat\nu}\subset G$. Therefore the
$\operatorname{Ad}^*$-action of $G$ on the coadjoint orbit
${\mathcal O}$ induces the action of $G$ on the set
${\Pi_{12}^{\mathfrak g}}({\mathcal O})$.
We will show that on the set
${\Pi_{12}^{\mathfrak g}}({\mathcal O})$
there exists a structure of a smooth manifold such that the map
${\Pi_{12}^{\mathfrak g}}|{\mathcal O}$ is a
$G$-equivariant submersion. Remark also that for arbitrary
${\hat\tau}_0\in{\mathfrak g}^*_{\hat\nu}$ there exists some
${\hat\sigma}_0\in{\mathfrak g}^*$ such that
${\Pi_{12}^{\mathfrak g}}({\hat\sigma}_0)=({\hat\nu},{\hat\tau}_0)$
iff
${\hat\tau}_0|{\mathfrak n}_{{\hat\nu}}={\hat\nu}|{\mathfrak
n}_{{\hat\nu}}$.
In this case such an element
${\hat\tau}_0\in{\mathfrak g}^*_{\hat\nu}$ is called a
${\mathfrak g}^*_{\hat\nu}$-extension of
${\hat\nu}\in{\mathfrak n}^*$.

Let $B$ be the $G$-orbit in
${\mathfrak n}^*$ with respect to the action
$\rho^*$. Now we construct a bundle of redused-group orbits
over the orbit $B$. This bundle is the bundle
$p: P\to B$ such that the fibre
$F_P({{\hat\nu}})=p^{-1}({\hat\nu})$ is an orbit of
$G_{{\hat\nu}}$ in
${\mathfrak g}_{{\hat\nu}}^*$ passing through some
${\mathfrak g}^*_{\hat\nu}$-extension of
${\hat\nu}\in{\mathfrak n}^*$ and if
$g$ belongs to $G$ and ${\hat\tau}$ to
$F_P({{\hat\nu}})$ then
$g.{\hat\tau}\in F_P({g\cdot{\hat\nu}})$ (a right action)
is defined by $\langle g.{\hat\tau},\xi'\rangle=\langle
{\hat\tau},\operatorname{Ad}_g\xi' \rangle$,
where $\xi'\in{\mathfrak g}_{g\cdot{\hat\nu}}$. It follows that
$G$ acts transitively on $P$. We prove below that this
bundle and this action are smooth.

The bundle of reduced-group orbits may be described in
another way. Consider the (smooth) bundle
$P_{\nu\tau}=G\times_{G_\nu}(G_\nu/G_{\nu\tau})$,
the bundle associated to the principal bundle with base
$G/G_\nu$, total space $G$ and fibre
$(G_\nu/G_{\nu\tau})$. Here
$\nu$ denotes some element of the orbit $B$ and
$\tau\in F_P(\nu)$. Then
$F_P(\nu)\simeq G_\nu/G_{\nu\tau}$. The elements of
$P_{\nu\tau}$ are orbits of $G_\nu$ (on
$G\times(G_\nu/G_{\nu\tau})$), where the action on the right
is given by $(g,[h]).h'=(gh',[{h'}^{-1}h])$ with $g\in G$,
$h,h'\in G_\nu$
($[h]=h\,G_{\nu\tau}\in G_\nu/G_{\nu\tau}$). The element
$(g,[h]).G_\nu$ of $P_{\nu\tau}$, is identified with the point
$(gh)^{-1}.\tau$ in $F_P({(gh)^{-1}\cdot\nu})$. Defining $p$ by
$p((g,[h]).G_\nu)=(gh)^{-1}\cdot\nu$ and the right action of
$G$ on $P$ by $g'.(g,[h]).G_\nu= (g'^{-1}g,[h]).G_\nu$ makes
$p:P\to B$ a smooth bundle of reduced-group orbits over
$B={\mathcal O}^\nu(G)$. The following proposition
generalizes Proposition 1 from~\cite{Ra}.
\begin{proposition}\label{pr.17}
There is a bijection between the set of bundles of
reduced-group orbits and the set of coadjoint orbits of
$G$ on ${\mathfrak g}^*$.
\end{proposition}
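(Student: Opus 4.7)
\medskip
\noindent\textbf{Proof plan.} The plan is to construct explicit maps $\Phi$ and $\Psi$ in both directions between coadjoint orbits of $G$ on ${\mathfrak g}^*$ and bundles of reduced-group orbits, and then verify that they are mutually inverse.

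First I would define $\Phi$. Given a coadjoint orbit ${\mathcal O}\subset{\mathfrak g}^*$, pick any $\sigma\in{\mathcal O}$ and let $\nu=\sigma|{\mathfrak n}$, $\tau=\sigma|{\mathfrak g}_\nu$. Set $\Phi({\mathcal O})=P_{\nu\tau}=G\times_{G_\nu}(G_\nu/G_{\nu\tau})$, equipped with the projection $p$ and the right $G$-action described just before the statement. The essential observation is that the fibers of ${\Pi_{12}^{\mathfrak g}}|{\mathcal O}$ are exactly the affine subspaces ${\mathcal A}_{\hat\nu\hat\tau}$, and by~\eqref{eq.25} each such fiber is a single $N^0_{\hat\nu}$-orbit sitting inside ${\mathcal O}$; hence ${\Pi_{12}^{\mathfrak g}}({\mathcal O})$ is in natural $G$-equivariant bijection with $P_{\nu\tau}$ and this bijection is a smooth bundle map over $B={\mathcal O}^\nu(G)$ via the chain $G_\sigma\subset G_{\nu\tau}\subset G_\nu\subset G$ together with~\eqref{eq.29}--\eqref{eq.31}. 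A different choice of $\sigma\in{\mathcal O}$ is related to the first by some $g\in G$, yielding a canonically $G$-equivariantly isomorphic bundle, so $\Phi({\mathcal O})$ is well defined up to isomorphism.

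Next I would define $\Psi$. Given $p:P\to B$, choose any $\hat\nu\in B$ and any $\hat\tau\in F_P(\hat\nu)$. By the definition of a bundle of reduced-group orbits, $\hat\tau$ is a ${\mathfrak g}^*_{\hat\nu}$-extension of $\hat\nu$, so ${\mathcal A}_{\hat\nu\hat\tau}$ is non-empty. Pick $\hat\sigma\in{\mathcal A}_{\hat\nu\hat\tau}$ and set $\Psi(P)={\mathcal O}^{\hat\sigma}(G)$. Well-definedness has three layers that I would verify in turn: (i) any two elements of ${\mathcal A}_{\hat\nu\hat\tau}$ lie on a common $N^0_{\hat\nu}$-orbit by~\eqref{eq.25}, hence in the same coadjoint orbit; (ii) any two elements of $F_P(\hat\nu)=G_{\hat\nu}.\hat\tau$ are $G_{\hat\nu}$-related, and $\operatorname{Ad}^*(G_{\hat\nu})$ permutes the affine spaces ${\mathcal A}_{\hat\nu\hat\tau'}$ inside a single $G$-orbit; (iii) two different choices of $\hat\nu\in B$ are $G$-related, and the $G$-equivariance of $P$ together with the $G$-equivariance of the coadjoint action handles the remaining ambiguity.

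Finally I would verify that $\Phi$ and $\Psi$ are mutually inverse. The identity $\Psi\circ\Phi({\mathcal O})={\mathcal O}$ is immediate: one extracts $\sigma$ from ${\mathcal O}$ and rebuilds ${\mathcal O}^\sigma(G)={\mathcal O}$. For $\Phi\circ\Psi(P)\cong P$, the bundle built from ${\mathcal O}^{\hat\sigma}(G)$ has fiber over $\hat\nu$ equal to $G_{\hat\nu}/G_{\hat\nu\hat\tau}=F_P(\hat\nu)$ by construction, and both the base and the $G$-action agree at this reference point; I would then extend this fibrewise identification to a global $G$-equivariant bundle isomorphism using transitivity of $G$ on $P$. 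The step I expect to be most delicate is precisely this last identification: one must verify that the right $G$-action on $G\times_{G_{\hat\nu}}(G_{\hat\nu}/G_{\hat\nu\hat\tau})$ induced from the coadjoint action on ${\mathcal O}^{\hat\sigma}(G)$ coincides with the one built into the abstract bundle $P$. This reduces to the compatibility between $\operatorname{Ad}^*$ on ${\mathcal A}_{\hat\nu\hat\tau}$ and the stabilizer $G_{\hat\nu\hat\tau}$ recorded in~\eqref{eq.29}--\eqref{eq.31}, but it is the only place where the definitions must be unwound carefully.
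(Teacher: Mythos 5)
Your proposal is correct and follows essentially the same route as the paper: both directions are given by the same two constructions (orbit $\mapsto$ bundle over $B={\mathcal O}^\nu(G)$ with fibre the $G_\nu$-orbit of $\tau$, and bundle $\mapsto$ orbit through an extension $\sigma$ of the pair $(\nu,\tau)$), with well-definedness resting on the fact, recorded in~\eqref{eq.25} and~\eqref{eq.30}, that all extensions of $(\nu,\tau)$ form the single affine space ${\mathcal A}_{\nu\tau}$ lying in one coadjoint orbit. Your write-up merely spells out the equivariance checks that the paper leaves implicit.
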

\begin{proof}
Let $p:P\to B$ be a bundle of reduced-group orbits, take
$\nu\in B$, $\tau\in F_P(\nu)$ and choose some extension
$\sigma\in{\mathfrak g}^*$ with
$\sigma|{\mathfrak n}=\nu\in{\mathfrak n}^*$ and
$\sigma|{\mathfrak g}_\nu=\tau\in{\mathfrak g}_\nu^*$. If
${\mathcal O}^\sigma$ is the
$\operatorname{Ad}^*(G)$-orbit through $\sigma$ in
${\mathfrak g}^*$ then it depends only on
$p:P\to B$ but not of the choices made because all extensions
of $(\nu,\tau)$ are elements of this orbit (see~\eqref{eq.30}).

Conversely, let ${\mathcal O}$ be an
$\operatorname{Ad}^*(G)$-orbit in
${\mathfrak g}^*$ and $\sigma$ a point of
${\mathcal O}$ with $\sigma|{\mathfrak n}=\nu$ and
$\sigma|{\mathfrak g}_\nu=\tau$.
Construct the bundle of reduced-group over
$B$, the orbit of $\nu$ in
${\mathfrak n}^*$, with fibre $F_P(\nu)$, the
$G_\nu$-orbit of $\tau\in{\mathfrak g}_\nu^*$ in
${\mathfrak g}_\nu^*$. This gives a bundle depending only on
${\mathcal O}$ and not of the choices made. These two
constructions are the inverses of each other and set up the
required bijection.
\end{proof}

If we have an orbit
${\mathcal O}^\sigma={\mathcal O}^\sigma(G)$ in
${\mathfrak g}^*$ and the associated bundle
$p:P\to B$, then the following diagram (on the left) of
$G$-equivariant maps is commutative. Recall that, by
the definition,
${\Pi_{12}^{\mathfrak
g}}({\hat\sigma})=({\hat\sigma}|{\mathfrak
n},{\hat\sigma}|{\mathfrak g}_\nu)=({\hat\nu},{\hat\tau})$
and
${\Pi_1^{\mathfrak g}}({\hat\sigma})={\hat\sigma}|{\mathfrak n}$.
\begin{equation}\label{eq.40}
\begin{array}{rrl}
{\mathcal O}^\sigma&&\\
{\makebox[10pt]{$\scriptstyle{\Pi_{12}^{\mathfrak g}}$}\downarrow}
&\makebox[0pt]{\makebox[30pt][r]{{$\searrow$}
{\makebox[12pt]{$\scriptstyle{\Pi_1^{\mathfrak g}}$}}}}& \\
P&\stackrel{p}\longrightarrow&B
\end{array}
\hskip50pt
\begin{array}{crl}
G\times_{G_\nu}(G_\nu/G_\sigma)
&&\\
{\makebox[10pt]{$\scriptstyle{\Pi_{12}^{\mathfrak g}}$}\downarrow}
&\makebox[0pt]{\makebox[30pt][r]{{$\searrow$}%
{\makebox[12pt]{$\scriptstyle{\Pi_1^{\mathfrak g}}$}}}}& \\
G\times_{G_\nu}(G_\nu/G_{\nu\tau})
&\stackrel{p}\longrightarrow&G/G_\nu
\end{array}
\end{equation}
As we remarked above, the fibres of
${\Pi_{12}^{\mathfrak g}}$ are affine subspaces of
${\mathfrak g}^*$ whose associated vector space is
$({\mathfrak n}+{\mathfrak g}_{\hat\nu})^\bot$
(in general there will be no natural origin in
${\Pi_{12}^{\mathfrak
g}}^{-1}({\hat\nu},{\hat\tau})={\mathcal A}_{{\hat\nu}{\hat\tau}}$).
Thus the fibres of ${\Pi_{12}^{\mathfrak g}}$ are the orbits on
${\mathcal O}^\sigma$ of the groups conjugated to
$G_{{\hat\nu}{\hat\tau}}$.

The map ${\Pi_{12}^{\mathfrak g}}:{\mathcal O}^\sigma(G)\to P$
is smooth because the map
${\Pi_1^{\mathfrak g}}:{\mathcal O}^\sigma(G)\to B$
is a submersion and the left diagram is commutative. This
fact can be established also by identifying
$G$-equivariantly the bundle $P$ with
$P_{\nu\tau}=G\times_{G_\nu}(G_\nu/G_{\nu\tau})$.
But by the definition, ${\mathcal O}^\sigma\simeq G/G_\sigma$ and
$B\simeq G/G_\nu$. Consider the space
$G\times_{G_\nu}(G_\nu/G_\sigma)$,
where the action on the right is given by
$(g,hG_\sigma).h'=(gh',{h'}^{-1}hG_\sigma)$ with
$g$ in $G$, $h, h'$ in $G_\nu$. The standard map
$$
G\times_{G_\nu}(G_\nu/G_\sigma)\to G/G_\sigma,\quad
[(g,hG_\sigma)]_{G_\nu}\mapsto ghG_\sigma
$$
is a $G$-equivariant diffeomorphism with respect to the natural
left actions of
$G$. Therefore, using this identification, we obtain the
following expressions for the
$G$-equivariant maps
$p,{\Pi_1^{\mathfrak g}},{\Pi_{12}^{\mathfrak g}}$:\
$p([(g,hG_{\nu\tau})]_{G_\nu})=ghG_\nu$,
$$
{\Pi_1^{\mathfrak g}}([(g,hG_\sigma)]_{G_\nu})=ghG_\nu,
\quad\text{and}\quad
{\Pi_{12}^{\mathfrak g}}([(g,hG_\sigma)]_{G_\nu})
=[(g,hG_{\nu\tau})]_{G_\nu}.
$$
It is clear that the diagram above (on the right) is also
commutative and these two diagrams are equivalent. Remark
also that by Proposition~\ref{pr.6} the fibre
${\mathcal A}_{\nu\tau}$ is an isotropic submanifold of the
coadjoint orbit ${\mathcal O}^\sigma(G)$. We have proved
\begin{theorem}\label{th.18}
The map
${\Pi_{12}^{\mathfrak g}}:{\mathcal O}^\sigma(G)\to P$ is a
$G$-equivariant submersion of the coadjoint orbit
${\mathcal O}^\sigma$ onto the bundle $P$
of reduced-group orbits. This map is a bundle with the total space
${\mathcal O}^\sigma$, the base $P$ and the affine space
${\mathcal A}_{\nu\tau}$ (the isotropic submanifold of
${\mathcal O}^\sigma(G)$) as its fibre. The commutative
diagrams~\eqref{eq.40} are equivalent.
\end{theorem}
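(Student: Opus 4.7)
The plan is to assemble the statement from pieces already developed. The key idea is to pass both objects to homogeneous-space models: $\mathcal{O}^\sigma(G)\simeq G/G_\sigma$ and, via Proposition~\ref{pr.17}, $P\simeq P_{\nu\tau}=G\times_{G_\nu}(G_\nu/G_{\nu\tau})$. Once both sides are in this form, the map $\Pi_{12}^{\mathfrak g}$ becomes visibly a $G$-equivariant submersion of associated bundles, and the fibre is manifestly a coset space that we can then identify with $\mathcal{A}_{\nu\tau}$.

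First I would rewrite the source by using the standard $G$-equivariant diffeomorphism
$$
G\times_{G_\nu}(G_\nu/G_\sigma)\to G/G_\sigma,\qquad [(g,hG_\sigma)]_{G_\nu}\mapsto ghG_\sigma,
$$
recorded in the paragraph before the theorem. Under this identification the map $\Pi_1^{\mathfrak g}$ corresponds to $[(g,hG_\sigma)]_{G_\nu}\mapsto ghG_\nu$, and $\Pi_{12}^{\mathfrak g}$ corresponds to the associated-bundle map
$$
[(g,hG_\sigma)]_{G_\nu}\mapsto [(g,hG_{\nu\tau})]_{G_\nu}
$$
induced by the quotient $G_\nu/G_\sigma\to G_\nu/G_{\nu\tau}$. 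Smoothness, the submersion property, and $G$-equivariance are then built into the associated-bundle construction, and the right-hand diagram in~\eqref{eq.40} commutes tautologically. Comparing these explicit formulas with the description of $p$ on $P_{\nu\tau}$ given just above the theorem shows that the left-hand and right-hand diagrams in~\eqref{eq.40} are equivalent.

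Next I would identify the fibre. The fibre of the associated-bundle projection $G\times_{G_\nu}(G_\nu/G_\sigma)\to G\times_{G_\nu}(G_\nu/G_{\nu\tau})$ over a chosen base point is the coset space $G_{\nu\tau}/G_\sigma$. By~\eqref{eq.30} of Proposition~\ref{pr.6}, the group $\operatorname{Ad}^*(G_{\nu\tau})$ acts transitively on $\mathcal{A}_{\nu\tau}$ with isotropy $G_\sigma$, so $G_{\nu\tau}/G_\sigma$ is $G$-equivariantly diffeomorphic to $\mathcal{A}_{\nu\tau}$. Proposition~\ref{pr.6} also furnishes the isotropy of $\mathcal{A}_{\nu\tau}$ inside $\mathcal{O}^\sigma(G)$, completing the statement about the fibre.

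The main obstacle will be bookkeeping rather than geometry: one must verify that the abstract bundle $P$ produced by Proposition~\ref{pr.17} really coincides $G$-equivariantly with $P_{\nu\tau}$ under the identification $(g,[h])\cdot G_\nu\leftrightarrow (gh)^{-1}.\tau\in F_P((gh)^{-1}\cdot\nu)$ described before Proposition~\ref{pr.17}, including that the right $G$-action on $P$ is intertwined with the right action $g'.(g,[h])\cdot G_\nu=({g'}^{-1}g,[h])\cdot G_\nu$ on $P_{\nu\tau}$, and that the projection $p$ is carried to ${{\hat\nu}}$-valued map $(g,[h])\cdot G_\nu\mapsto (gh)^{-1}\cdot\nu$. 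Once this matching is recorded, everything in Theorem~\ref{th.18} follows formally from Propositions~\ref{pr.6} and~\ref{pr.17} together with the homogeneous-space identification above.
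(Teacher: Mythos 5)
Your proposal is correct and follows essentially the same route as the paper: the author likewise identifies ${\mathcal O}^\sigma\simeq G/G_\sigma\simeq G\times_{G_\nu}(G_\nu/G_\sigma)$ and $P\simeq P_{\nu\tau}=G\times_{G_\nu}(G_\nu/G_{\nu\tau})$, reads off $\Pi_{12}^{\mathfrak g}$ as the associated-bundle map induced by $G_\nu/G_\sigma\to G_\nu/G_{\nu\tau}$ with fibre $G_{\nu\tau}/G_\sigma\simeq{\mathcal A}_{\nu\tau}$ via~\eqref{eq.30}, and invokes Proposition~\ref{pr.6} for the isotropy of the fibre.
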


\subsection{Isotropic affine subspaces of coadjoint orbits}

As follows from Proposition~\ref{pr.6} each coadjoint orbit
${\mathcal O}$ of the Lie algebra ${\mathfrak g}$
contains the isotropic affine subspace associated with its ideal
${\mathfrak n}$. We will show below that if this affine
subspace is trivial then any isotropic affine subspace of
the corresponding coadjoint orbit in
${\mathfrak b}_\nu^*\subset{\mathfrak g}_\nu^*$
determines some isotropic affine subspace of
${\mathcal O}$.

Let ${\mathcal O}^\sigma={\mathcal O}^\sigma(G)$, where
$\sigma\in{\mathfrak g}^*$, be a coadjoint orbit in
${\mathfrak g}^*$. Consider also the coadjoint orbit
${\mathcal O}^\tau$ in ${\mathfrak g}^*_\nu$~\eqref{eq.34}
passing through the element
$\tau\in{\mathfrak g}_\nu^*$, where
$\tau=\sigma|{\mathfrak g}_\nu$.
To simplify the notation, this orbit
${\mathcal O}^\tau={\mathcal O}^\tau(G^0_\nu)$
of the connected Lie group
$G^0_\nu$ will be considered as an orbit of the (closed) Lie
subgroup $G_\nu^\bullet=G_\nu^0\cdot G_{\nu\tau}$ of
$G_\nu$, containing the whole isotropy subgroup
$G_{\nu\tau}$. The $\sigma$-orbit
${\mathcal O}^\sigma(G^\bullet_\nu)\subset {\mathcal O}^\sigma(G_\nu)$
in ${\mathfrak g}^*$ is also connected because,
by~\eqref{eq.31},
$G_\nu^\bullet=G_\nu^0\cdot G_\sigma$. Recall that
${\Pi_{2}^{\mathfrak g}}$ denotes the natural
$G_\nu$-equivariant projection
${\mathfrak g}^*\to{\mathfrak g}_\nu^*$,
$\beta\mapsto\beta|{\mathfrak g}_\nu$
defined by~\eqref{eq.37}.
\begin{proposition}\label{pr.19}
Let $\sigma\in{\mathfrak g}^*$ be an arbitrary element and
$\nu=\sigma|{\mathfrak n}$,
$\tau=\sigma|{\mathfrak g}_\nu$. The restriction
$p_2={\Pi_{2}^{\mathfrak g}}|{\mathcal O}^\sigma(G^\bullet_\nu)$
of the projection
${\Pi_{2}^{\mathfrak g}}$ is a
$G^\bullet_\nu$-equivariant submersion of the orbit
${\mathcal O}^\sigma(G^\bullet_\nu)$
onto the coadjoint orbit
${\mathcal O}^\tau$ in ${\mathfrak g}_\nu^*$. This map
$p_2:{\mathcal O}^\sigma(G^\bullet_\nu)\to {\mathcal O}^\tau$
is a bundle with the total space
${\mathcal O}^\sigma(G^\bullet_\nu)$, the coadjoint orbit
${\mathcal O}^\tau$ as its base and the affine space
${\mathcal A}_{\nu\tau}\simeq G_{\nu\tau}/G_\sigma$
as its fibre. Moreover,
$p_2^*(\omega')=\omega|{\mathcal O}^\sigma(G^\bullet_\nu)$,
where $\omega'$ and $\omega$ are the canonical
Kirillov-Kostant-Souriau symplectic 2-forms on the coadjoint orbits
${\mathcal O}^\tau\subset{\mathfrak g}^*_\nu$ and
${\mathcal O}^\sigma(G)\subset{\mathfrak g}^*$ respectively.
\end{proposition}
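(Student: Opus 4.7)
The plan is to realize $p_2$ as the canonical fibration of a homogeneous space and then to verify the symplectic identity by a pointwise KKS computation together with $G_\nu^\bullet$-equivariance.

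First I would observe that the projection $\Pi_{2}^{\mathfrak{g}}\colon\mathfrak{g}^*\to\mathfrak{g}_\nu^*$ is $G_\nu$-equivariant with respect to $\operatorname{Ad}^*$ and $\widehat{\operatorname{Ad}}^*$ (see \eqref{eq.26}--\eqref{eq.27}), and hence so is its restriction $p_2$ to $\mathcal{O}^\sigma(G_\nu^\bullet)$. Consequently $p_2$ sends this orbit into the $G_\nu^\bullet$-orbit of $\tau=\Pi_{2}^{\mathfrak{g}}(\sigma)$; since $G_{\nu\tau}\subset G_\nu^\bullet$ fixes $\tau$, this coincides with the $G_\nu^0$-orbit $\mathcal{O}^\tau$. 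Using the inclusions $G_\sigma\subset G_{\nu\tau}\subset G_\nu^\bullet$ (all closed in $G$) one obtains smooth $G_\nu^\bullet$-equivariant identifications $\mathcal{O}^\sigma(G_\nu^\bullet)\simeq G_\nu^\bullet/G_\sigma$ and $\mathcal{O}^\tau\simeq G_\nu^\bullet/G_{\nu\tau}$, so that $p_2$ becomes the canonical projection $G_\nu^\bullet/G_\sigma\to G_\nu^\bullet/G_{\nu\tau}$; this is automatically a $G_\nu^\bullet$-equivariant locally trivial smooth fibre bundle (in particular a submersion) with typical fibre $G_{\nu\tau}/G_\sigma$.

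Next I would identify this fibre explicitly. The fibre $p_2^{-1}(\tau)$ consists of those $\operatorname{Ad}^*_g\sigma\in\mathcal{O}^\sigma(G_\nu^\bullet)$ with $(\operatorname{Ad}^*_g\sigma)|\mathfrak{g}_\nu=\tau$, which by equivariance means $g\in G_{\nu\tau}$; hence it equals the orbit $\operatorname{Ad}^*(G_{\nu\tau})\sigma$. By Proposition~\ref{pr.6} (see in particular \eqref{eq.30}) this orbit is precisely the affine space $\mathcal{A}_{\nu\tau}\simeq G_{\nu\tau}/G_\sigma$, which confirms the claimed fibre.

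For the symplectic claim I would compute at the point $\sigma$. Because $\Pi_{2}^{\mathfrak{g}}$ is linear, its differential at $\sigma$ is restriction to $\mathfrak{g}_\nu$, so for $\xi,\eta\in\mathfrak{g}_\nu$ one finds $dp_2(\sigma)(\operatorname{ad}^*_\xi\sigma)=\widehat{\operatorname{ad}}^*_\xi\tau$, using $[\xi,\eta]\in\mathfrak{g}_\nu$ and $\sigma|\mathfrak{g}_\nu=\tau$. The KKS definitions then evaluate both 2-forms on $(\operatorname{ad}^*_\xi\sigma,\operatorname{ad}^*_\eta\sigma)$ to $\langle\sigma,[\xi,\eta]\rangle=\langle\tau,[\xi,\eta]\rangle$, establishing the identity at $\sigma$. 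Since $p_2^*\omega'$ and $\omega|\mathcal{O}^\sigma(G_\nu^\bullet)$ are both $G_\nu^\bullet$-invariant and $G_\nu^\bullet$ acts transitively on $\mathcal{O}^\sigma(G_\nu^\bullet)$, the identity propagates to the whole orbit. The main obstacle is really bookkeeping: arranging the various subgroups $G_\sigma$, $G_{\nu\tau}$, $G_\nu^0$, $G_\nu^\bullet$ so that the homogeneous-space quotients carry genuine smooth bundle structures and that the fibre over $\tau$ really is all of $\mathcal{A}_{\nu\tau}$; this is exactly what Proposition~\ref{pr.6} (together with the decomposition $G_{\nu\tau}=N^0_\nu\cdot G_\sigma$ of \eqref{eq.31}) is designed to supply.
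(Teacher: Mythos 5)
Your proposal is correct and follows essentially the same route as the paper: the bundle structure is obtained from the homogeneous-space identifications ${\mathcal O}^\sigma(G^\bullet_\nu)\simeq G^\bullet_\nu/G_\sigma$, ${\mathcal O}^\tau\simeq G^\bullet_\nu/G_{\nu\tau}$, ${\mathcal A}_{\nu\tau}\simeq G_{\nu\tau}/G_\sigma$ (with $G^\bullet_\nu/G_\sigma=G^\bullet_\nu\times_{G_{\nu\tau}}(G_{\nu\tau}/G_\sigma)$), and the symplectic identity from the pointwise computation $p_{2*}(\sigma)(\operatorname{ad}^*_\xi\sigma)=\widehat{\operatorname{ad}}^*_\xi\tau$ giving $\tau([\xi,\eta])=\sigma([\xi,\eta])$, propagated by $G^\bullet_\nu$-invariance. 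Your write-up merely makes explicit some steps the paper leaves terse (equivariance of $\Pi_2^{\mathfrak g}$, identification of the fibre over $\tau$ via~\eqref{eq.30}).
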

\begin{proof}
To prove the first part of the proposition it is sufficient
to remark that
$$
{\mathcal O}^\sigma(G^\bullet_\nu)
\simeq G^\bullet_\nu/G_\sigma, \quad
{\mathcal O}^\tau
={\mathcal O}^\tau(G^\bullet_\nu)
\simeq G^\bullet_\nu/G_{\nu\tau}, \quad
{\mathcal A}_{\nu\tau}
\simeq G_{\nu\tau}/G_\sigma
$$
and
$G^\bullet_\nu/G_\sigma=
G^\bullet_\nu\times_{G_{\nu\tau}}(G_{\nu\tau}/G_\sigma)$
with the standard right action of
$G_{\nu\tau}$.

By $G^\bullet_\nu$-equivariance of the map
$p_2$, we have
$p_{2*}(\sigma)(\operatorname{ad}^*_{\xi}\sigma)
=\widehat{\operatorname{ad}}^*_{\xi}\tau$
for $\xi,\eta\in{\mathfrak g}_\nu$.
Then, by the definition, of the form $\omega'$,
$$
(p_2^*\omega')(\sigma)(\operatorname{ad}^*_\xi\sigma,
\operatorname{ad}^*_\eta\sigma)
=\omega'(\tau)(\widehat{\operatorname{ad}}^*_\xi\tau,
\widehat{\operatorname{ad}}^*_\eta\tau)
=\tau([\xi,\eta])=\sigma([\xi,\eta]).
$$
Taking into account the expression~\eqref{eq.10} for
$\omega$ at the point $\sigma$ and
$G^\bullet_\nu$-invariance of the forms $\omega$ and
$\omega'$, we complete the proof.
\end{proof}

\begin{remark}\label{re.20}
The proposition above admits the following moment map
interpretation which is motivated by Panasyuk's
approach~\cite{Pa}. Indeed, the identity map
$J_G:{\mathcal O}^\sigma(G)\to{\mathfrak g}^*$,
${\hat\sigma}\mapsto{\hat\sigma}$
is an equivariant moment map for
$\operatorname{Ad}^*$-action of $G$ on
${\mathcal O}^\sigma(G)$. Since
${\mathfrak n}$ is a subalgebra of ${\mathfrak g}$, the map
$J_N={\Pi_1^{\mathfrak g}}\circ J_G$ of
${\mathcal O}^\sigma(G)$ into ${\mathfrak n}^*$,
${\hat\sigma}\mapsto{\hat\sigma}|{\mathfrak n}$
is an equivariant moment map for the restricted action of
$N\subset G$ on
${\mathcal O}^\sigma(G)$. Then by~\eqref{eq.6} the set
$J_N^{-1}(\nu)={\mathcal A}_\nu\cap{\mathcal
O}^\sigma(G)={\mathcal O}^\sigma(G_\nu)$
is a submanifold of ${\mathcal O}^\sigma(G)$. If
$N^\mathrm{fin}_\nu=N_\nu$, then by Proposition~\ref{pr.6}
the quotient space
$J^{-1}_N(\nu)/N_\nu\simeq{\Pi_{2}^{\mathfrak g}}({\mathcal
O}^\sigma(G_\nu))$
is a reduced symplectic manifold. This manifold is the orbit
${\mathcal O}^\tau(G_\nu)\subset{\mathfrak
b}^*_\nu\subset{\mathfrak g}^*_\nu$,
a union of disjoint coadjoint orbits (connected components)
in the reduced Lie algebra
${\mathfrak b}_\nu^*$. The reduced symplectic structure on
${\mathcal O}^\tau(G_\nu)$ coincides with the canonical
Kirillov-Kostant-Souriau symplectic form on each connected
component of ${\mathcal O}^\tau(G_\nu)$.
\end{remark}

\begin{proposition}\label{pr.21}
We retain the notation of Proposition~\ref{pr.19}. Suppose
that the coadjoint orbit
${\mathcal O}^\tau\subset{\mathfrak g}_\nu^*$
contains the isotropic affine subspace
${\mathcal I}(\tau)$ passing through the point $\tau$. If
$\dim {\mathcal A}_{\nu\tau}=0$, then the projection
$p_2:{\mathcal O}^\sigma(G^0_\nu)\to{\mathcal O}^\tau$
is a bijection and the preimage
${\mathcal I}(\sigma)=p_2^{-1}({\mathcal I}(\tau))$,
${\mathcal I}(\sigma)\subset {\mathcal
O}^\sigma(G^0_\nu)\subset{\mathcal A}_\nu$,
is an isotropic affine subspace of the coadjoint orbit
${\mathcal O}^\sigma(G)$ passing through
$\sigma\in{\mathfrak g}^*$.
\end{proposition}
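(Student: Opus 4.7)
The plan is to exploit the triviality of the fibre $\mathcal{A}_{\nu\tau}$, which by Remark~\ref{re.8} is equivalent to each of $(\mathfrak{n}+\mathfrak{g}_\nu)^\bot=0$, $\mathfrak{g}_\sigma=\mathfrak{g}_{\nu\tau}$ and $\mathfrak{n}_\nu\subset\mathfrak{g}_\sigma$. Together with Proposition~\ref{pr.19}, this forces $p_2$ to collapse to a diffeomorphism, and the symplectic identity $p_2^*\omega'=\omega|\mathcal{O}^\sigma(G_\nu^\bullet)$ then transports the isotropic affine subspace $\mathcal{I}(\tau)$ back to $\mathcal{O}^\sigma(G)$ as an isotropic submanifold which, thanks to the injectivity of a certain linear map, is automatically affine.

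First I would verify that $p_2:\mathcal{O}^\sigma(G_\nu^0)\to\mathcal{O}^\tau$ is a bijection. From $\mathfrak{g}_\sigma=\mathfrak{g}_{\nu\tau}$ and~\eqref{eq.31} one gets $G_\nu^\bullet=G_\nu^0\cdot G_{\nu\tau}=G_\nu^0\cdot G_\sigma$, so $\mathcal{O}^\sigma(G_\nu^\bullet)=\mathcal{O}^\sigma(G_\nu^0)$; by Proposition~\ref{pr.19} the fibres of $p_2$ are copies of $\mathcal{A}_{\nu\tau}$, hence singletons, so $p_2$ is a bijective smooth submersion between manifolds of equal dimension and therefore a diffeomorphism.

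Next I would describe $\mathcal{I}(\sigma)$ intrinsically as an affine subspace of $\mathfrak{g}^*$. The key observation is that the linear map $\Pi_2^{\mathfrak g}$, restricted to the affine subspace $\mathcal{A}_\nu=\sigma+\mathfrak{n}^\bot$, has trivial translation-kernel $\mathfrak{n}^\bot\cap\mathfrak{g}_\nu^\bot=(\mathfrak{n}+\mathfrak{g}_\nu)^\bot=0$. Hence $\Pi_2^{\mathfrak g}|\mathcal{A}_\nu$ is an injective affine map into $\mathfrak{g}_\nu^*$ whose image lies in $\mathcal{B}_\nu$ (see Remark~\ref{re.12}). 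Since $\mathcal{O}^\sigma(G_\nu^0)\subset\mathcal{A}_\nu$ by~\eqref{eq.8} and the restriction $p_2$ carries it bijectively onto $\mathcal{O}^\tau$, one obtains
$$
\mathcal{I}(\sigma)=p_2^{-1}(\mathcal{I}(\tau))=(\Pi_2^{\mathfrak g}|\mathcal{A}_\nu)^{-1}(\mathcal{I}(\tau)),
$$
which is the preimage of an affine subspace under an injective affine map, hence an affine subspace of $\mathcal{A}_\nu\subset\mathfrak{g}^*$ containing $\sigma$ (as $p_2(\sigma)=\tau\in\mathcal{I}(\tau)$).

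Finally, to check the isotropic property, I would invoke $p_2^*\omega'=\omega|\mathcal{O}^\sigma(G_\nu^\bullet)$ from Proposition~\ref{pr.19}. Since $p_2$ is a diffeomorphism and $\mathcal{I}(\tau)$ is isotropic for $\omega'$, its preimage $\mathcal{I}(\sigma)$ is isotropic for the pulled-back form, i.e.\ for the restriction of $\omega$ to the submanifold $\mathcal{O}^\sigma(G_\nu^0)\subset\mathcal{O}^\sigma(G)$; since isotropy is tested only on tangent spaces of $\mathcal{I}(\sigma)$ itself, this is exactly isotropy inside $\mathcal{O}^\sigma(G)$. The only delicate point, and hence the mildest obstacle, is bookkeeping the chain $G_\sigma\subset G_{\nu\tau}\subset G_\nu^\bullet$ to identify $\mathcal{O}^\sigma(G_\nu^\bullet)$ with $\mathcal{O}^\sigma(G_\nu^0)$ under the hypothesis $\dim\mathcal{A}_{\nu\tau}=0$; once that is done, everything reduces to the fact that $\Pi_2^{\mathfrak g}$ restricts to an injective linear-affine map on $\mathcal{A}_\nu$ and that Proposition~\ref{pr.19} is a symplectic submersion.
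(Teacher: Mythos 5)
Your proof is correct, but the argument for the affineness of ${\mathcal I}(\sigma)$ is genuinely different from the paper's. The paper proves affineness line by line: for a tangent direction $\alpha$ of ${\mathcal I}(\tau)$ at $\tau$ it picks, for each $t$, an element $g\in G^0_\nu$ with $\tau+t\alpha=\widehat{\operatorname{Ad}}^*_g\tau$, and then uses the hypothesis in the form ${\mathcal A}_{\nu\tau}=\{\sigma\}$ to force $\operatorname{Ad}^*_{g^{-1}}(\sigma+t\alpha')=\sigma$, so that each line of ${\mathcal I}(\tau)$ through $\tau$ lifts to a line of the orbit through $\sigma$. You instead observe that the hypothesis is equivalent to $({\mathfrak n}+{\mathfrak g}_\nu)^\bot=0$, i.e. to the injectivity of the affine map ${\Pi_{2}^{\mathfrak g}}|{\mathcal A}_\nu$, and then identify $p_2^{-1}({\mathcal I}(\tau))$ with $({\Pi_{2}^{\mathfrak g}}|{\mathcal A}_\nu)^{-1}({\mathcal I}(\tau))$, a preimage of an affine subspace under an injective affine map. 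This is shorter, avoids the group action entirely in this step, and makes the role of $\dim{\mathcal A}_{\nu\tau}=0$ more transparent; the one point that deserves to be spelled out a little more is the reverse inclusion in $p_2^{-1}({\mathcal I}(\tau))=({\Pi_{2}^{\mathfrak g}}|{\mathcal A}_\nu)^{-1}({\mathcal I}(\tau))$: if $\beta\in{\mathcal A}_\nu$ has ${\Pi_{2}^{\mathfrak g}}(\beta)\in{\mathcal O}^\tau$, then surjectivity of $p_2$ onto ${\mathcal O}^\tau$ gives some $\beta'\in{\mathcal O}^\sigma(G^0_\nu)\subset{\mathcal A}_\nu$ with the same image, and injectivity on ${\mathcal A}_\nu$ forces $\beta=\beta'$, so $\beta$ already lies in the orbit. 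With that sentence added, your argument is complete; the bijectivity and isotropy parts coincide with the paper's (both are immediate from Proposition~\ref{pr.19}).
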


\begin{proof}
It is an immediate consequence of Proposition~\ref{pr.19}
that the map $p_2$ is a bijection and
${\mathcal I}(\sigma)=p_2^{-1}({\mathcal I}(\tau)$
is an isotropic submanifold of the coadjoint orbit
${\mathcal O}^\sigma(G)$. Let us show that the set
${\mathcal I}(\sigma)$ is an affine subspace of
${\mathfrak g}^*$.

Since ${\mathcal I}(\tau)$ is an affine subspace of
${\mathfrak g}_\nu^*$, this space contains any line
$\tau+t\alpha$, where
$\alpha\in{\mathfrak g}_\nu^*$ is a non-zero tangent vector to
${\mathcal I}(\tau)$ at
$\tau$. There exists a unique tangent vector
$\alpha'\in T_\sigma{\mathcal O}^\sigma(G_\nu^0)$ such that
$p_{2*}(\sigma)(\alpha')=\alpha$ because
$p_2$ is a bijection. Moreover,
$\alpha'\in{\mathfrak n}^\bot$ because
${\mathcal O}^\sigma(G_\nu^0)\subset{\mathcal A}_\nu$ and
$\alpha={\Pi_{2}^{\mathfrak g}}(\alpha')$ because
$p_2$ is a restriction of the linear map
${\Pi_{2}^{\mathfrak g}}$~\eqref{eq.26}.

Since ${\mathcal I}(\tau)\subset{\mathcal O}^\tau$, for any
$t\in{\mathbb F}$ there exists $g\in G_\nu^0$ such that
$\tau+t\alpha=\widehat{\operatorname{Ad}}^*_g\tau$.
Now to complete the proof it is sufficient to show that the point
$\sigma'={\operatorname{Ad}}^*_{g^{-1}}\sigma+
t{\operatorname{Ad}}^*_{g^{-1}}\alpha'$
coincides with $\sigma$. Indeed, by $G_\nu$-equivariance
of the linear map ${\Pi_{2}^{\mathfrak g}}$
$$
{\Pi_{2}^{\mathfrak g}}(\sigma')=
{\Pi_{2}^{\mathfrak g}}({\operatorname{Ad}}^*_{g^{-1}}\sigma+
t{\operatorname{Ad}}^*_{g^{-1}}\alpha')
=\widehat{\operatorname{Ad}}^*_{g^{-1}}\tau+
t\widehat{\operatorname{Ad}}^*_{g^{-1}}\alpha
=\tau.
$$
But $\sigma'\in{\mathcal A}_\nu$ because
$\operatorname{Ad}^*(G_\nu)({\mathcal A}_\nu)={\mathcal A}_\nu$
and $\sigma+t\alpha'\in{\mathcal A}_\nu$. Therefore
$\sigma'$ belongs to the one-point set
${\mathcal A}_{\nu\tau}=\{\sigma\}$.
\end{proof}

\begin{proposition}\label{pr.22}
We retain the notation of Proposition~\ref{pr.19}. Suppose that
$\dim{\mathcal A}_{\nu\tau}=0$ and the quotient algebra
${\mathfrak b}_\nu={\mathfrak g}_\nu/{\mathfrak n}^\natural_\nu$,
${\mathfrak n}^\natural_\nu=\ker(\nu|{\mathfrak n}_\nu)$
is Abelian. Then
\begin{enumerate}
\item[$1)$]
${\mathcal O}^\sigma(G)={\mathcal O}^\sigma(N)$ and
${\mathcal O}^\nu(G)={\mathcal O}^\nu(N)$, where
${\mathcal O}^\sigma(G)$ and ${\mathcal O}^\nu(N)$
are the coadjoint orbits of the Lie algebras
${\mathfrak g}$ and ${\mathfrak n}$ respectively;
\item[$2)$] the projection
$p_1:{\mathcal O}^\sigma(G)\to{\mathcal O}^\nu(N)$,
$\sigma'\mapsto\sigma'|{\mathfrak n}$, is a symplectic
$G$-equivariant covering map with the discrete fiber
$\simeq N_\nu/N_\sigma$ and
${\mathfrak g}_\sigma={\mathfrak g}_\nu$,
${\mathfrak n}_\sigma={\mathfrak n}_\nu$;
\item[$3)$] if $N_\nu=N_\nu^\mathrm{fin}$, then
$p_1$ is a diffeomorphism, and, in particular,
$G_\nu=G_\sigma$, $N_\nu=N_\sigma$.
\end{enumerate}
\end{proposition}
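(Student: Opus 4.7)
The plan is to first derive the identity $\mathfrak{g}_\sigma=\mathfrak{g}_\nu$ (and hence $\mathfrak{n}_\sigma=\mathfrak{n}_\nu$) from the two standing hypotheses; the remainder of the proposition then follows essentially by bookkeeping with Lemma~\ref{le.4}, Remark~\ref{re.8}, and~\eqref{eq.24}. The inclusion $\mathfrak{g}_\sigma\subset\mathfrak{g}_\nu$ is automatic. For the reverse, pick $\xi\in\mathfrak{g}_\nu$ and $\eta\in\mathfrak{g}$; by condition~3 of Remark~\ref{re.8} the hypothesis $\dim\mathcal{A}_{\nu\tau}=0$ gives $\mathfrak{g}=\mathfrak{g}_\nu+\mathfrak{n}$, so write $\eta=\eta'+y$ with $\eta'\in\mathfrak{g}_\nu$ and $y\in\mathfrak{n}$. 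Abelianness of $\mathfrak{b}_\nu$ forces $[\xi,\eta']\in[\mathfrak{g}_\nu,\mathfrak{g}_\nu]\subset\mathfrak{n}^\natural_\nu\subset\ker\nu$, while~\eqref{eq.19} gives $[\xi,y]\in[\mathfrak{g}_\nu,\mathfrak{n}]\subset\ker\nu$. Both brackets lie in $\mathfrak{n}$, so $\langle\sigma,[\xi,\eta]\rangle=\langle\nu,[\xi,\eta]\rangle=0$, i.e.\ $\xi\in\mathfrak{g}_\sigma$. Intersecting with $\mathfrak{n}$ yields $\mathfrak{n}_\sigma=\mathfrak{n}_\nu$.

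From $\mathfrak{g}_\sigma=\mathfrak{g}_\nu$ and $\mathfrak{g}=\mathfrak{g}_\nu+\mathfrak{n}$ one obtains $T_\sigma\mathcal{O}^\sigma(G)=\operatorname{ad}^*_{\mathfrak{g}}\sigma=\operatorname{ad}^*_{\mathfrak{n}}\sigma=T_\sigma\mathcal{O}^\sigma(N)$. Hence $\mathcal{O}^\sigma(N)$ is open in $\mathcal{O}^\sigma(G)$; since $N\subset G$ is normal, $\mathcal{O}^\sigma(G)$ is a disjoint union of $N$-orbits of the same dimension, each of which must be open, and connectedness of the $G$-orbit forces only one, i.e.\ $\mathcal{O}^\sigma(G)=\mathcal{O}^\sigma(N)$. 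Applying $p_1$ then gives $\mathcal{O}^\nu(G)=\mathcal{O}^\nu(N)$, proving part~1. For part~2, Lemma~\ref{le.4} makes $p_1$ a $G$-equivariant submersion onto a manifold of the same dimension, hence a covering map; its fiber over $\nu$ is $G_\nu\cdot\sigma=N_\nu\cdot\sigma\simeq N_\nu/N_\sigma$, which is discrete because $\mathfrak{n}_\sigma=\mathfrak{n}_\nu$. The symplectic identity $p_1^*\omega_{\mathfrak{n}}=\omega$ drops out of the KKS definitions: $p_1$ carries $\operatorname{ad}^*_y\sigma$ to $\rho^*_y\nu$ and $\langle\sigma,[y,z]\rangle=\langle\nu,[y,z]\rangle$ for $y,z\in\mathfrak{n}$.

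For part~3, the assumption $N_\nu=N_\nu^\mathrm{fin}$ combined with~\eqref{eq.24} gives that $N_\nu$ preserves $\mathcal{A}_{\nu\tau}$; but $\mathcal{A}_{\nu\tau}=\{\sigma\}$ by Remark~\ref{re.8}, so $N_\nu\subset G_\sigma\cap N=N_\sigma$, whence $N_\nu=N_\sigma$. The fiber of $p_1$ collapses to a point, so $p_1$ is a diffeomorphism; and $G_\nu\cdot\sigma=N_\nu\cdot\sigma=\{\sigma\}$ yields $G_\nu\subset N_\nu\cdot G_\sigma=N_\sigma\cdot G_\sigma=G_\sigma$, so $G_\nu=G_\sigma$. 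The only real content lies in the commutator computation in the first paragraph; once $\mathfrak{g}_\sigma=\mathfrak{g}_\nu$ is in hand, no further obstacle remains.
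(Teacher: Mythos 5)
Your proof is correct, but the way you obtain the central identity $\mathfrak{g}_\sigma=\mathfrak{g}_\nu$ is genuinely different from the paper's. You prove it first, by a purely Lie-algebraic computation: decomposing $\eta=\eta'+y$ via $\mathfrak{g}=\mathfrak{g}_\nu+\mathfrak{n}$ (Remark~\ref{re.8}), using Abelianness of $\mathfrak{b}_\nu$ to get $[\xi,\eta']\in\mathfrak{n}^\natural_\nu\subset\ker\nu$ and~\eqref{eq.19} to get $[\xi,y]\in\ker\nu$, and then reading off $\xi\in\mathfrak{g}_\sigma$ from~\eqref{eq.2}; everything else ($T_\sigma{\mathcal O}^\sigma(G)=\operatorname{ad}^*_{\mathfrak n}\sigma$, openness of the $N$-orbits, the covering and symplectic properties of $p_1$, part~3) then follows by the same bookkeeping the paper uses. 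The paper instead runs the argument in the opposite direction and at the level of orbits: it invokes Proposition~\ref{pr.19} to identify the fibre ${\mathcal O}^\sigma(G_\nu)$ of $p_1$ with ${\mathcal O}^\tau(G_\nu)$ (since ${\mathcal A}_{\nu\tau}=\{\sigma\}$), observes that the coadjoint orbits of the Abelian algebra $\mathfrak{b}_\nu$ are points so this fibre is discrete, concludes that $p_1$ is a local diffeomorphism, and only then deduces $\dim\mathfrak{g}_\sigma=\dim\mathfrak{g}_\nu$ and hence $\mathfrak{g}_\sigma=\mathfrak{g}_\nu$ from the inclusion $\mathfrak{g}_\sigma\subset\mathfrak{g}_\nu$. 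Your route is more elementary in that it bypasses Proposition~\ref{pr.19} entirely and makes the role of the two hypotheses (triviality of ${\mathcal A}_{\nu\tau}$ and Abelianness of $\mathfrak{b}_\nu$) completely explicit in a two-line bracket computation; the paper's route better exhibits how the statement sits inside the general fibration picture ${\mathcal O}^\sigma(G^\bullet_\nu)\to{\mathcal O}^\tau$ developed in the preceding subsection. One small point worth making explicit in your version: the equality of tangent spaces should be asserted at every point of ${\mathcal O}^\sigma(N)$, not only at $\sigma$, before concluding openness; this follows by $N$-equivariance exactly as in the paper, so it is a presentational rather than a substantive gap.
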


\begin{proof}
Since $N$ is a normal subgroup of $G$, the $G$-orbit
${\mathcal O}^\nu(G)$ is a disjoint union of isomorphic
$N$-orbits. These $N$-orbits are open subsets because
$\dim{\mathcal O}^\nu(G)-\dim{\mathcal O}^\nu(N)
=\dim{\mathcal A}_{\nu\tau}=0$.
Then ${\mathcal O}^\nu(G)={\mathcal O}^\nu(N)$ because
$G$ is connected.

By Proposition~\ref{pr.19}
$\dim{\mathcal O}^\sigma(G_\nu)=\dim{\mathcal O}^\tau(G_\nu)$
because ${\mathcal A}_{\nu\tau}=\{\sigma\}$.
Since each connected component of
${\mathcal O}^\tau(G_\nu)$ is a coadjoint orbit of the Lie
algebra ${\mathfrak b}_\nu$ which is Abelian,
$\dim{\mathcal O}^\tau(G_\nu)=0$.
Thus by Lemma~\ref{le.4} the
$G$-equivariant map
$p_1:{\mathcal O}^\sigma(G)\to{\mathcal O}^\nu(G)$
is a bundle with the discrete fibre
${\mathcal O}^\sigma(G_\nu)$. Taking into account the
identity ${\mathcal O}^\nu(G)={\mathcal O}^\nu(N)$ and the
$N$-equivariance of the local diffeomorphism
$p_1$ we obtain that
$T_\sigma{\mathcal O}^\sigma(G)=T_\sigma{\mathcal O}^\sigma(N)$,
i.e. the orbit ${\mathcal O}^\sigma(N)$ is an open subset of
${\mathcal O}^\sigma(G)$. Using the same arguments as above,
we obtain that
${\mathcal O}^\sigma(G)={\mathcal O}^\sigma(N)$. Since
${\mathcal O}^\sigma(G)\simeq N/N_\sigma$ and
${\mathcal O}^\nu(G)\simeq N/N_\nu$, the fiber
${\mathcal O}^\sigma(G_\nu)\simeq N_\nu/N_\sigma$. Since
$\dim{\mathcal O}^\sigma(G)=\dim{\mathcal O}^\nu(G)$,
$\dim{\mathfrak g}_\sigma=\dim{\mathfrak g}_\nu$. Thus
${\mathfrak g}_\sigma={\mathfrak g}_\nu$ because
${\mathfrak g}_\sigma\subset{\mathfrak g}_\nu$.

The local diffeomorphism
$p_1$ is symplectic with respect to the canonical
symplectic structures on the both coadjoint orbits. To prove
this fact it is sufficient to observe that
$T_\sigma{\mathcal
O}^\sigma(G)=\operatorname{ad}^*_{\mathfrak n}\sigma$,
$p_{1*}(\sigma)(\operatorname{ad}^*_{\xi}\sigma)
=\widetilde{\operatorname{ad}}^*_{\xi}\nu$
and $\sigma([\xi,\eta])=\nu([\xi,\eta])$ for any
$\xi,\eta\in{\mathfrak n}$ (by $N$-equivariance of
$p_1$), and to use definition~\eqref{eq.10} of the canonical
symplectic form.

If $N_\nu=N_\nu^\mathrm{fin}$, then by Proposition~\ref{pr.6}
the group $\operatorname{Ad}^*(N_\nu)$ preserves the one-point set
${\mathcal A}_{\nu\tau}=\{\sigma\}$ and, consequently,
$N_\nu=N_\sigma$. Hence
$p_1$ is a diffeomorphism.
\end{proof}

\subsection{Coadjoint orbits in general position}

In the previous subsection we considered arbitrary
coadjoint orbits of
${\mathfrak g}$. Now we consider the structure of the orbits
in general position. To this end put
\begin{equation}\label{eq.41}
\operatorname{co}({\mathfrak g},{\mathfrak n})
=\operatorname{ind}{\mathfrak n}
-\operatorname{ind}({\mathfrak g},{\mathfrak n}).
\end{equation}
Taking into account that
$\dim{\mathcal O}^\nu(G)-\dim{\mathcal O}^\nu(N)
=\operatorname{codim}_{{\mathfrak n}^*}{\mathcal O}^\nu(N)
-\operatorname{codim}_{{\mathfrak n}^*}{\mathcal O}^\nu(G)$
for any $\nu\in{\mathfrak n}^*$, we can interpret the number
$\operatorname{co}({\mathfrak g},{\mathfrak n})$
as a ``complexity'' of the action of
$N\subset G$ on homogeneous spaces of
$G$ in general position. Then by~\eqref{eq.23}
\begin{equation}\label{eq.42}
\operatorname{co}({\mathfrak g},{\mathfrak n})
=\dim({\mathfrak n}+{\mathfrak g}_{\nu})^\bot
=\dim{\mathcal A}(\sigma,{\mathfrak n})
\end{equation}
for all $\nu$ from some dense subset of ${\mathfrak n}^*$
containing the non-empty Zariski open set of all
${\mathfrak g}$-regular and
${\mathfrak n}$-regular points of ${\mathfrak n}^*$. Here
$\sigma\in{\mathcal A}_\nu=({\Pi_1^{\mathfrak g}})^{-1}(\nu)$
and
${\mathcal A}(\sigma,{\mathfrak n})
=\sigma+\dim({\mathfrak n}+{\mathfrak g}_\nu)^\bot$
is the isotropic affine subspace of the coadjoint orbit
${\mathcal O}^\sigma\subset{\mathfrak g}^*$.

The case when $\operatorname{co}({\mathfrak g},{\mathfrak n})=0$
we consider in more detail.
\begin{lemma}\label{le.23}
Suppose that
$\operatorname{co}({\mathfrak g},{\mathfrak n})=0$. Let
$\nu\in{\mathfrak n}^*$ be any
${\mathfrak n}$-regular point. Then
\begin{enumerate}
\item[$1)$] $\nu\in{\mathfrak n}^*$ is a
${\mathfrak g}$-regular point; \item[$2)$]
${\mathcal O}^{\nu}(G)={\mathcal O}^{\nu}(N)$
or, equivalently,
${\mathfrak g}_{\nu}+{\mathfrak n}={\mathfrak g}$; \item
[$3)$] if
${\mathcal A}_{\nu}\cap R({\mathfrak g}^*)\not=\emptyset$
and the quotient algebra
${\mathfrak b}_{\nu}={\mathfrak g}_{\nu}/{\mathfrak n}^\natural_{\nu}$
is Abelian then for each
${\mathfrak n}$-regular point
$\nu_1\in{\mathfrak n}^*$ {\rm (i)} the algebra
${\mathfrak b}_{\nu_1} ={\mathfrak g}_{\nu_1}/{\mathfrak
n}^\natural_{\nu_1}$
is Abelian; {\rm (ii)}
${\mathcal A}_{\nu_1}\subset R({\mathfrak g}^*)$;
{\rm (iii)}
${\mathfrak g}_{\sigma_1}={\mathfrak g}_{\nu_1}$, where
$\sigma_1\in {\mathcal A}_{\nu_1}$;
{\rm (iv)} the Lie algebra
${\mathfrak g}_{\nu_1}$ is Abelian; {\rm (v)} there exists
an Abelian Lie algebra
${\mathfrak a}\subset{\mathfrak g}_{\nu_1}$
such that the Lie algebra
${\mathfrak g}$ is a semidirect product of
${\mathfrak a}$ and the ideal ${\mathfrak n}$, i.e.
${\mathfrak g}={\mathfrak a}\ltimes{\mathfrak n}$.
\end{enumerate}
\end{lemma}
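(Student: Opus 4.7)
The plan is to split the proof into three blocks according to the structure of the statement, pushing the real work into the last block.

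\textbf{Items 1) and 2).} Both reduce to a dimension count. Formula \eqref{eq.23} rewrites $\dim\mathcal{A}_{\nu\tau}=\dim(\mathfrak{g}/\mathfrak{n})-\dim((\mathfrak{g}_\nu+\mathfrak{n})/\mathfrak{n})$, so \eqref{eq.42} turns the hypothesis $\operatorname{co}(\mathfrak{g},\mathfrak{n})=0$ into the global identity $\operatorname{ind}(\mathfrak{g},\mathfrak{n})=\operatorname{ind}\mathfrak{n}+\dim(\mathfrak{g}/\mathfrak{n})$. For any $\mathfrak{n}$-regular $\nu$, $\dim\mathfrak{n}_\nu=\operatorname{ind}\mathfrak{n}$ and $\dim\mathfrak{g}_\nu\geq\operatorname{ind}(\mathfrak{g},\mathfrak{n})$; substituting into $\dim(\mathfrak{g}_\nu+\mathfrak{n})=\dim\mathfrak{g}_\nu+\dim\mathfrak{n}-\dim\mathfrak{n}_\nu$ forces $\dim(\mathfrak{g}_\nu+\mathfrak{n})\geq\dim\mathfrak{g}$, hence equality throughout. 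This simultaneously yields 2) ($\mathfrak{g}_\nu+\mathfrak{n}=\mathfrak{g}$, equivalent to $\mathcal{O}^\nu(G)=\mathcal{O}^\nu(N)$) and 1) ($\dim\mathfrak{g}_\nu=\operatorname{ind}(\mathfrak{g},\mathfrak{n})$).

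\textbf{Items 3)(i)--(iii).} First, $\mathfrak{b}_\nu$ Abelian forces $[\mathfrak{g}_\nu,\mathfrak{g}_\nu]\subset\mathfrak{n}_\nu^\natural\subset\mathfrak{n}$, and surjectivity of $\mathfrak{g}_\nu\to\mathfrak{g}/\mathfrak{n}$ makes $\mathfrak{g}/\mathfrak{n}$ Abelian. By 2) applied to $\nu_1$, $\mathfrak{g}_{\nu_1}/\mathfrak{n}_{\nu_1}\simeq\mathfrak{g}/\mathfrak{n}$ is Abelian, so $[\mathfrak{g}_{\nu_1},\mathfrak{g}_{\nu_1}]\subset\mathfrak{n}_{\nu_1}$. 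If $\nu_1|\mathfrak{n}_{\nu_1}=0$ then $\mathfrak{n}_{\nu_1}^\natural=\mathfrak{n}_{\nu_1}$ and (i) is immediate; otherwise I would invoke Theorem~\ref{th.13} at both $\nu$ and $\nu_1$ to match $\operatorname{ind}\mathfrak{b}_{\nu_1}=\dim\mathfrak{b}_\nu=\dim\mathfrak{b}_{\nu_1}$, forcing $\mathfrak{b}_{\nu_1}$ Abelian. Given (i), $\tau_1([\mathfrak{g}_{\nu_1},\mathfrak{g}_{\nu_1}])=0$ for every extension $\tau_1=\sigma_1|\mathfrak{g}_{\nu_1}$ with $\sigma_1\in\mathcal{A}_{\nu_1}$, so $\mathfrak{g}_{\nu_1\tau_1}=\mathfrak{g}_{\nu_1}$, and Remark~\ref{re.8} yields $\mathfrak{g}_{\sigma_1}=\mathfrak{g}_{\nu_1\tau_1}=\mathfrak{g}_{\nu_1}$, which is (iii). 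The constancy of $\dim\mathfrak{g}_{\sigma_1}$ on $\mathcal{A}_{\nu_1}$, together with $\dim\mathfrak{g}_{\nu_1}=\dim\mathfrak{g}_\nu$ from 1) and the existence of a $\mathfrak{g}$-regular $\sigma\in\mathcal{A}_\nu$, gives (ii).

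\textbf{Items (iv) and (v).} This is the main obstacle. For (v) I look for a subalgebra $\mathfrak{a}\subset\mathfrak{g}_{\nu_1}$ that is a vector complement to $\mathfrak{n}_{\nu_1}$. Any such $\mathfrak{a}$ satisfies $[\mathfrak{a},\mathfrak{a}]\subset[\mathfrak{g}_{\nu_1},\mathfrak{g}_{\nu_1}]\subset\mathfrak{n}_{\nu_1}^\natural$ and $\mathfrak{a}\cap\mathfrak{n}_{\nu_1}=0$, so the subalgebra condition automatically forces $[\mathfrak{a},\mathfrak{a}]=0$; the existence question reduces to splitting the extension $0\to\mathfrak{n}_{\nu_1}\to\mathfrak{g}_{\nu_1}\to\mathfrak{g}/\mathfrak{n}\to 0$, i.e.\ to trivialising the 2-cocycle $(x,y)\mapsto[s(x),s(y)]\in\mathfrak{n}_{\nu_1}^\natural$ attached to any linear section $s:\mathfrak{g}/\mathfrak{n}\to\mathfrak{g}_{\nu_1}$. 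I would try to produce the corrective $\phi\in\operatorname{Hom}(\mathfrak{g}/\mathfrak{n},\mathfrak{n}_{\nu_1}^\natural)$ by combining Abelianness of $\mathfrak{b}_{\nu_1}$ with the $G^0_{\nu_1}$-invariance of every $\sigma_1\in\mathcal{A}_{\nu_1}$ coming from (iii), which should constrain the cocycle to be a coboundary for the action of the Abelian quotient $\mathfrak{g}/\mathfrak{n}$ on $\mathfrak{n}_{\nu_1}^\natural$. For (iv), write $\mathfrak{g}_{\nu_1}=\mathfrak{a}\oplus\mathfrak{n}_{\nu_1}$; what remains is $[\mathfrak{a},\mathfrak{n}_{\nu_1}]=0$ and $[\mathfrak{n}_{\nu_1},\mathfrak{n}_{\nu_1}]=0$. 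The first reduces, by \eqref{eq.19} and (iii), to showing that no nontrivial bracket in $\mathfrak{n}_{\nu_1}^\natural$ can be consistent with $\mathfrak{g}_{\sigma_1}=\mathfrak{g}_{\nu_1}$ holding for every $\sigma_1\in\mathcal{A}_{\nu_1}$; the second I would extract from Proposition~\ref{pr.22}, using that the symplectic covering $p_1:\mathcal{O}^{\sigma_1}(G)\to\mathcal{O}^{\nu_1}(N)$ exhibits $\mathfrak{n}_{\nu_1}=\mathfrak{n}_{\sigma_1}$ as the stabiliser of a regular point of $\mathfrak{n}^*$ inside the $N$-coadjoint orbit, and then pushing the rigidity of the common isotropy $\mathfrak{g}_{\nu_1}=\mathfrak{g}_{\sigma_1}$ along $\mathcal{A}_{\nu_1}$ to annihilate all remaining commutators.
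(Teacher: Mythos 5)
Your items 1) and 2) are correct and coincide with the paper's argument: $\operatorname{co}({\mathfrak g},{\mathfrak n})=0$ gives ${\mathfrak g}_{\nu_0}+{\mathfrak n}={\mathfrak g}$ at one simultaneously ${\mathfrak n}$- and ${\mathfrak g}$-regular point $\nu_0$, and the inequality $\dim{\mathfrak g}_{\nu_1}\geqslant\dim{\mathfrak g}_{\nu_0}$ together with $\dim{\mathfrak n}_{\nu_1}=\operatorname{ind}{\mathfrak n}$ propagates this to every ${\mathfrak n}$-regular point. In part 3), however, your logical order is inverted relative to the paper's and this causes a circularity: to prove (i) for $\nu_1$ with $\nu_1|{\mathfrak n}_{\nu_1}\not=0$ you invoke Theorem~\ref{th.13} at $\nu_1$, but that theorem requires ${\mathcal A}_{\nu_1}\cap R({\mathfrak g}^*)\not=\emptyset$, which is precisely item (ii), which you then derive from (i) and (iii). (There is also an unaddressed mismatch when $\nu\notin R^\natural({\mathfrak n}^*)$ while $\nu_1\in R^\natural({\mathfrak n}^*)$.) The paper does not need (i) at this stage: from ${\mathfrak b}_\nu$ Abelian it gets ${\mathcal O}^\tau=\{\tau\}$, i.e.\ ${\mathfrak g}_{\nu\tau}={\mathfrak g}_\nu$, and then Remark~\ref{re.8} together with regularity of $\sigma$ gives the chain $\dim{\mathfrak g}_{\sigma_1}=\dim{\mathfrak g}_{\nu_1\tau_1}\leqslant\dim{\mathfrak g}_{\nu_1}=\dim{\mathfrak g}_\nu=\dim{\mathfrak g}_{\nu\tau}=\dim{\mathfrak g}_\sigma=\operatorname{ind}{\mathfrak g}$, which combined with $\dim{\mathfrak g}_{\sigma_1}\geqslant\operatorname{ind}{\mathfrak g}$ yields (ii) and (iii) simultaneously; (i) then falls out of (iv).

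The decisive gap is in (iv) and (v). The missing idea is the standard fact, which the paper states and sketches (differentiate $\langle\sigma_t,[{\mathfrak g}_{\sigma_t},{\mathfrak g}_{\sigma_t}]\rangle=0$ along a curve of regular elements), that the isotropy algebra of a ${\mathfrak g}$-regular element of ${\mathfrak g}^*$ under the coadjoint representation is Abelian. With (ii) and (iii) this gives (iv) in one line, since ${\mathfrak g}_{\nu_1}={\mathfrak g}_{\sigma_1}$ is such an isotropy algebra; and (v) is then trivial, because any linear complement ${\mathfrak a}$ of ${\mathfrak n}_{\nu_1}$ in the Abelian algebra ${\mathfrak g}_{\nu_1}$ is automatically an Abelian subalgebra with ${\mathfrak g}={\mathfrak a}\dotplus{\mathfrak n}$ by 2). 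Your plan instead tries to prove (v) before (iv) by splitting the extension $0\to{\mathfrak n}_{\nu_1}\to{\mathfrak g}_{\nu_1}\to{\mathfrak g}/{\mathfrak n}\to0$ through a cocycle computation, and to obtain (iv) from Proposition~\ref{pr.22} plus an unspecified ``rigidity'' of the common isotropy along ${\mathcal A}_{\nu_1}$. Neither step is actually carried out, and there is no evident reason the $2$-cocycle you describe should be a coboundary without already knowing that ${\mathfrak g}_{\nu_1}$ is Abelian; so as written the argument for (iv) and (v) does not close.
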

\begin{proof}
Since $\operatorname{co}({\mathfrak g},{\mathfrak n})=0$,
$\dim{\mathcal O}^{\nu_0}(G)=\dim{\mathcal O}^{\nu_0}(N)$
for some point $\nu_0\in R({\mathfrak n}^*)$ which is
${\mathfrak g}$-regular. Hence
${\mathfrak g}_{\nu_0}+{\mathfrak n}={\mathfrak g}$.
But for each $\nu_1\in R({\mathfrak n}^*)$ the isotropy algebra
${\mathfrak n}_{\nu_1}={\mathfrak g}_{\nu_1}\cap{\mathfrak n}$
has constant dimension
$\operatorname{ind}{\mathfrak n}$ and
$\dim{\mathfrak g}_{\nu_1}\geqslant\dim{\mathfrak g}_{\nu_0}$.
Therefore
${\mathfrak g}_{\nu_1}+{\mathfrak n}={\mathfrak g}$, i.e.
$\dim{\mathfrak g}_{\nu_1}=\dim{\mathfrak g}_{\nu_0}$.
In particular, ${\mathfrak g}_{\nu}+{\mathfrak n}={\mathfrak
g}$.

If the quotient algebra
${\mathfrak b}_{\nu}={\mathfrak g}_{\nu}/{\mathfrak n}^\natural_{\nu}$
is Abelian then the coadjoint orbit
${\mathcal O}^{\tau}$ is a one-point set $\{\tau\}$, i.e.
${\mathfrak g}_{\nu\tau}={\mathfrak g}_{\nu}$. Since
${\mathcal A}_{\nu}\cap R({\mathfrak g}^*)\not=\emptyset$,
there exists a ${\mathfrak g}$-regular element
$\sigma\in{\mathfrak g}^*$ such that its restriction
$\sigma|{\mathfrak n}=\nu$. But by Remark~\ref{re.8},
${\mathfrak g}_{\nu\tau}={\mathfrak g}_{\sigma}$ and
${\mathfrak g}_{\nu_1\tau_1}={\mathfrak g}_{\sigma_1}$,
where $\sigma_1\in{\mathcal A}_{\nu_1}$ and
$\tau_1=\sigma_1|{\mathfrak g}_{\nu_1}$. Hence
$$
\dim{\mathfrak g}_{\sigma_1}
=\dim{\mathfrak g}_{\nu_1\tau_1}
\leqslant\dim{\mathfrak g}_{\nu_1}
=\dim{\mathfrak g}_{\nu}
=\dim{\mathfrak g}_{\nu\tau}
=\dim{\mathfrak g}_{\sigma}
=\operatorname{ind}{\mathfrak g}
$$
because ${\mathfrak g}_{\nu_1\tau_1}\subset{\mathfrak g}_{\nu_1}$.
But by definition
$\dim{\mathfrak
g}_{\sigma_1}\geqslant\operatorname{ind}{\mathfrak g}$.
Therefore ${\mathfrak g}_{\sigma_1}={\mathfrak g}_{\nu_1}$ and
${\mathcal A}_{\nu_1}\subset R({\mathfrak g}^*)$
(all these points are
${\mathfrak g}$-regular). The Lie algebra
${\mathfrak g}_{\sigma_1}$ is Abelian as an isotropy algebra
of a ${\mathfrak g}$-regular element of the coadjoint
representation (one can prove this fact differentiating the identity
$\langle\sigma_t,[{\mathfrak g}_{\sigma_t},{\mathfrak
g}_{\sigma_t}]\rangle=0$
using definition~\eqref{eq.2} of
${\mathfrak g}_{\sigma_t}$). Hence the algebra
${\mathfrak g}_{\nu_1}={\mathfrak g}_{\sigma_1}$
is Abelian. Since
${\mathfrak g}_{\nu_1}+{\mathfrak n}={\mathfrak g}$,
there exists a subspace
${\mathfrak a}\subset{\mathfrak g}_{\nu_1}$ for which
${\mathfrak g}={\mathfrak a}\dotplus{\mathfrak n}$.
This subspace is an Abelian subalgebra of
${\mathfrak g}$.
\end{proof}

\subsection{Integral orbits: a necessary but
non sufficient condition}

In this subsection we will use the notation of the previous
subsections, but suppose in addition that the ground field
${\mathbb F}$ is the field
${\mathbb R}$ of real numbers.

First of all we will give an exposition of some results of
Kostant~\cite[\S\S 5.6, 5.7, Theorem 5.7.1]{Ko} on the
geometry of coadjoint orbits.

Let $H$ be a connected Lie group with the Lie algebra
${\mathfrak h}$. Fix some covector
$\varphi\in{\mathfrak h}^*$ and consider the coadjoint orbit
${\mathcal O}^\varphi={\mathcal O}^\varphi(H)\simeq H/H_\varphi$
in ${\mathfrak h}^*$. We will say that the coadjoint orbit
${\mathcal O}^\varphi$ in the dual space
${\mathfrak h}^*$ is integral if its canonical symplectic
form is integral, i.e. this form determines an integral
cohomology class in
$H^2({\mathcal O}^\varphi,{\mathbb Z})\subset H^2({\mathcal
O}^\varphi,{\mathbb R})$.

Denote by $H_\varphi^\sharp$ the set (possibly empty) of all
characters $\chi: H_\varphi\to{\mathbb S}^1\subset{\mathbb C}$
such that
$d\chi(e)=2\pi i\cdot\varphi|{\mathfrak h}_\varphi$, where
${\mathfrak h}_\varphi$ is the Lie algebra of the isotropy
group $H_\varphi$. For such a character
$\chi\in H_\varphi^\sharp$,
\begin{equation}\label{eq.43}
\chi(\exp\xi)=\exp(2\pi\, i\cdot\langle\varphi ,\xi\rangle)
\quad\text{for all}\quad \xi\in{\mathfrak h}_\varphi.
\end{equation}
Since the identity component
$H^0_\varphi$ of $H_\varphi$ is generated by its
neighborhood of the unity, the restriction
$\chi|H^0_\varphi$ is defined uniquely by
equation~\eqref{eq.43}. Therefore if
$H_\varphi^\sharp$ is not empty $H_\varphi^\sharp$ is a
$\pi^*_{H_\varphi/H^0_\varphi}$-principal
homogeneous space, where
$\pi^*_{H_\varphi/H^0_\varphi}$ is the group of
${\mathbb S}^1$-valued characters of the quotient group
$H_\varphi/H^0_\varphi$. In this case
$|H_\varphi^\sharp|=|\pi^*_{H_\varphi/H^0_\varphi}|$~\cite{Ko}.

Let $\widetilde H$ be the connected simply connected Lie group
with the Lie algebra
${\mathfrak h}$, the universal covering group of the
connected Lie group $H$ and $\tilde p:\widetilde H\to H$
be the corresponding covering homomorphism. Then
${\mathcal O}^\varphi=\widetilde H/\widetilde H_\varphi$,
where $\widetilde H_\varphi$ is the isotropy group of the element
$\varphi\in{\mathfrak h}^*$. By definition
$\widetilde H_\varphi=\tilde p^{-1}(H_\varphi)$ and
$H_\varphi\simeq\widetilde H_\varphi/D$, where
$D$ is the kernel of the restricted homomorphism
$\tilde p|\widetilde H_\varphi$.
The following Kostant's theorem~{\cite[Theorem 5.7.1]{Ko}}
is crucial for the forthcoming considerations.
\begin{theorem}[B.Kostant]\label{th.24}
The orbit ${\mathcal O}^\varphi$ in ${\mathfrak h}^*$ is integral
if and only if the character set $\widetilde H_\varphi^\sharp$
is not empty.
\end{theorem}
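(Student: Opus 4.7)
The plan is to deduce Kostant's theorem from Weil's prequantization theorem in the $\widetilde H$-equivariant setting. The central observation is that characters $\chi\in\widetilde H_\varphi^\sharp$ are in natural bijection with isomorphism classes of $\widetilde H$-equivariant Hermitian line bundles with invariant connection $(L,\nabla)$ over $\mathcal{O}^\varphi$ whose curvature equals $2\pi i\,\omega$, where $\omega$ is the Kirillov--Kostant--Souriau form. Once this correspondence is set up, the theorem follows since existence of such an $(L,\nabla)$ is, by Weil's theorem, equivalent to integrality of $[\omega]$.

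To build the correspondence I would introduce the left-invariant $1$-form $\alpha$ on $\widetilde H$ with $\alpha_e=\varphi$, and verify by a Maurer--Cartan computation that $d\alpha=-\pi^*\omega$, where $\pi\colon\widetilde H\to\mathcal{O}^\varphi\simeq\widetilde H/\widetilde H_\varphi$ is the canonical projection. Thus $\pi^*\omega$ becomes globally exact on the total space. Given $\chi\in\widetilde H_\varphi^\sharp$, form the associated line bundle $L_\chi=\widetilde H\times_\chi\mathbb{C}\to\mathcal{O}^\varphi$; the connection $d+2\pi i\,\alpha$ on $\widetilde H\times\mathbb{C}$ descends to a well-defined connection on $L_\chi$ precisely when its restriction along each fiber of $\pi$ matches the connection determined by $\chi$, and the infinitesimal form of this matching is exactly $d\chi(e)=2\pi i\cdot\varphi|_{\mathfrak h_\varphi}$, i.e.\ membership in $\widetilde H_\varphi^\sharp$. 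The descended curvature is then $2\pi i\,\omega$, so $[\omega]\in H^2(\mathcal{O}^\varphi,\mathbb{Z})$.

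For the converse, assume $[\omega]$ is integral and pick by Weil's theorem a Hermitian line bundle $(L,\nabla)$ on $\mathcal{O}^\varphi$ with curvature $2\pi i\,\omega$. The infinitesimal $\widetilde H$-action on $\mathcal{O}^\varphi$ lifts to an infinitesimal action on $L$ preserving $\nabla$ via Kostant's prequantization formula $\xi\mapsto\widetilde\xi+2\pi i\,\langle J,\xi\rangle$, where $J\colon\mathcal{O}^\varphi\hookrightarrow\mathfrak h^*$ is the tautological moment map and $\widetilde\xi$ is the $\nabla$-horizontal lift of the fundamental vector field of $\xi$. Simple connectedness of $\widetilde H$ allows this infinitesimal lift to integrate to a genuine $\widetilde H$-action by bundle automorphisms preserving $\nabla$. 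Restricting this action to the stabilizer of any chosen unit vector in the fiber $L_\varphi$ yields a character $\chi\colon\widetilde H_\varphi\to\mathbb S^1$, and differentiating Kostant's formula gives precisely $d\chi(e)=2\pi i\cdot\varphi|_{\mathfrak h_\varphi}$, so $\chi\in\widetilde H_\varphi^\sharp$.

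The main technical obstacle is the global integration of the infinitesimal lift to a genuine group action. Kostant's formula produces a priori only a Lie algebra homomorphism from $\mathfrak h$ into the derivations of $L$ that preserve $\nabla$, and in general such a homomorphism integrates only to a projective representation of $\widetilde H$, with obstruction class living in $H^2(\widetilde H,\mathbb S^1)$. The hypothesis $\pi_1(\widetilde H)=0$ together with integrality of $\omega$ forces this obstruction to vanish, but making this rigorous (for example via a path-space construction that lifts $1$-parameter subgroups and checks consistency on $2$-simplices of $\widetilde H$) is the genuine content of the argument. The remaining tasks, namely tracking of signs and factors of $2\pi i$ and checking smoothness of the descended objects, are routine.
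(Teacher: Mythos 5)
This theorem is not proved in the paper at all: it is imported verbatim as Kostant's Theorem 5.7.1 from the cited reference \cite{Ko}, so there is no internal argument to compare yours against. Your sketch is, in substance, the classical prequantization proof that Kostant himself gives: characters in $\widetilde H_\varphi^\sharp$ correspond to $\widetilde H$-homogeneous Hermitian line bundles with invariant connection of curvature $2\pi i\,\omega$ over $\mathcal O^\varphi\simeq\widetilde H/\widetilde H_\varphi$, and Weil's integrality theorem converts existence of such a bundle into integrality of $[\omega]$. The forward direction (character $\Rightarrow$ bundle $\Rightarrow$ integrality) is correctly laid out; the Maurer--Cartan computation $d\alpha=-\pi^*\omega$ and the descent criterion for $d+2\pi i\,\alpha$ are exactly the standard steps, modulo sign conventions tied to whether one realizes $\mathcal O^\varphi$ by left or right cosets.

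One remark on the converse, where you locate ``the genuine content.'' You describe the obstruction to integrating the infinitesimal lift as a class in $H^2(\widetilde H,\mathbb S^1)$ that must be shown to vanish. That slightly misdiagnoses the situation: because the tautological moment map $J$ on a coadjoint orbit is $\operatorname{Ad}^*$-equivariant, the assignment $\xi\mapsto\widetilde\xi+2\pi i\langle J,\xi\rangle$ is already an honest (not merely projective) Lie algebra homomorphism into the connection- and metric-preserving vector fields on the total space of the circle bundle --- the would-be Lie-algebra cocycle is killed by $J_{[\xi,\eta]}=\{J_\xi,J_\eta\}$. What remains is purely the Palais-type integration of a Lie algebra action of a simply connected group, for which you need completeness of the lifted fields; this holds because they preserve the Hermitian structure and cover the complete fundamental fields on the base. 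So no path-space or $2$-simplex bookkeeping is required, and no appeal to integrality of $\omega$ is needed at that step. With that correction the argument closes: the stabilizer $\widetilde H_\varphi$ then acts on the fiber $L_\varphi$ by a character whose differential is $2\pi i\cdot\varphi|{\mathfrak h_\varphi}$, as you state. Note also that the distinction the paper actually cares about downstream (Example~\ref{ex.25}, Remark~\ref{re.30}) is precisely that the theorem must be applied to the simply connected cover $\widetilde H$ and not to $H$ itself; your proof makes transparent where simple connectedness enters, which is consistent with the paper's use of the statement.
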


Remark that one can not formulate the integrality condition
for the orbit
${\mathcal O}^\varphi$ only in terms of the connected Lie
group $H_\varphi$ (defining this orbit) because as it will be
shown below (see Example~\ref{ex.25}) in the general case
the characters $\chi\in \widetilde H_\varphi^\sharp$
are not constant on the closed discrete subgroup
$D$ of the center of
$\widetilde H_\varphi$. In other words, it is possible that
$H_\varphi^\sharp=\emptyset$ while
$\widetilde H_\varphi^\sharp\not=\emptyset$.

\begin{example}\label{ex.25}
Consider the connected Lie group
$H=SO(3)$ and its universal covering group
$\widetilde H=SU(2)$ with the Lie algebra
${\mathfrak h}=su(2)$. Using the invariant scalar product
$\langle\varphi_1,\varphi_2\rangle=-\frac12\operatorname{Tr}
\varphi_1\varphi_2$
on ${\mathfrak h}$ we can identify the spaces
${\mathfrak h}$ and
${\mathfrak h}^*$. It is evident that for
$\varphi=\operatorname{diag}(ib,-ib)\in su(2)$ with
$b\in{\mathbb R}$ the isotropy group
$\widetilde H_\varphi=\{\operatorname{diag}(e^{ia},e^{-ia}),\
a\in{\mathbb R}\}$ and the isotropy algebra
${\mathfrak h}_\varphi=\{\operatorname{diag}(ia,-ia),\
a\in{\mathbb R}\}$.
In particular, $\widetilde H_\varphi$ contains the element
$-E=\operatorname{diag}(-1,-1)\in SU(2)$
of the kernel of the covering homomorphism
$\tilde p: SU(2)\to SO(3)$. Under our identification of
${\mathfrak h}$ with ${\mathfrak h}^*$ the map~\eqref{eq.43}
$\tilde\chi:\exp({\mathfrak h}_\varphi)\to{\mathbb S}^1$,
$\operatorname{diag}(e^{ia},e^{-ia})\mapsto e^{2\pi i ab}$,
is well defined if and only if
$2\pi b\in{\mathbb Z}$. Since the group
$\widetilde H_\varphi$ is connected, by Theorem~\ref{th.24}, the
orbit ${\mathcal O}^\varphi$ is integral if and only if the number
$2\pi b$ is integer. For such a covector
$\varphi$ the set $\widetilde H_\varphi^\sharp$
contains a unique element, the character
$\tilde\chi$. But if the number $2\pi b$ is odd then
$\tilde\chi(-E)=-1$. For such a covector $\varphi$ the set
$H_\varphi^\sharp$ is empty while
$\tilde H_\varphi^\sharp\not=\emptyset$.
Indeed, in the opposite case for
$\chi\in H_\varphi^\sharp$ we have by definition that
$\chi\circ\tilde p\in \tilde H_\varphi^\sharp$. Therefore
$\chi\circ\tilde p=\tilde\chi$. But
$(\chi\circ\tilde p)(-E)=1$ while
$\tilde\chi(-E)=-1$, the contradiction.
\end{example}

The character $\chi|H^0_\varphi$, $\chi\in H_\varphi^\sharp$ on
$H^0_\varphi$ admits another interpretation in terms of
differential forms. Choose a contractible neighborhood
$U\subset H^0_\varphi$ of the unity for which all
intersections $U\cap hU$, $h\in H^0_\varphi$ are also (smoothly)
contractible (one uses, for instance, a convex set relative
to any invariant Riemannian structure on
$H^0_\varphi$). The left $H^0_\varphi$-invariant one-form
$\theta_\varphi$ with
$\theta_\varphi(e)=\varphi|{\mathfrak h}_\varphi$
on the Lie group $H^0_\varphi$ is closed because, by the
definition~\eqref{eq.2} of an isotropy algebra,
$\varphi([{\mathfrak h}_\varphi,{\mathfrak h}_\varphi])=0$.
Therefore a character on
$H^0_\varphi$ determined by~\eqref{eq.43} exists if and only
if the one-form $\theta_\varphi$ is integral, i.e.
$\theta_\varphi\in H^1(H^0_\varphi,{\mathbb Z})$.
In this case there exists a family of local functions
$\{f_h: hU\to{\mathbb R}, h\in H^0_\varphi\}$ such that
$df_h=\theta_\varphi$ on the open subset $hU$ and
$f_{h_1}-f_{h_2}\in{\mathbb Z}$ if
$h_1U\cap h_2U\not=\emptyset$, $h_1,h_2\in H^0_\varphi$. By
$H^0_\varphi$-invariance of the form
$\theta_\varphi$ the family
$\{f_h\}$ determines the character on
$H^0_\varphi$ if $f_e(e)=0$. Then
$\chi|hU=\exp(2\pi if_h)$ and
\begin{equation}\label{eq.44}
f_h-\bigl(l_{h^{-1}}^*f_e+(1/2\pi i)\ln\chi(h)\bigr)\in{\mathbb Z}
\quad\text{on } hU,
\text{ where } l_{h^{-1}}(h')=h^{-1}h'.
\end{equation}
In this case we will say that the character
$\chi|H^0_\varphi$ is associated with the (integer) form
$\theta_\varphi$.

\begin{proposition}\label{pr.26}
Let $\sigma$ be an arbitrary element of
${\mathfrak g}^*$,
$\nu=\sigma|{\mathfrak n}$ and
$\tau=\sigma|{\mathfrak g}_\nu$.
There is a bijection between the sets
$G_{\nu\tau}^\sharp$ and $G_\sigma^\sharp$, where
$G_{\nu\tau}^\sharp$ denotes the set of all characters
$\chi: G_{\nu\tau}\to{\mathbb S}^1\subset{\mathbb C}$
such that $d\chi(e)=2\pi i\cdot\tau|{\mathfrak g}_\nu$.
This bijection is induced by the restriction map
$\chi\mapsto \chi|G_\sigma$.
\end{proposition}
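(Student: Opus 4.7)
The strategy is to check in turn that the assignment $\chi\mapsto\chi|G_\sigma$ sends $G_{\nu\tau}^\sharp$ into $G_\sigma^\sharp$, is injective, and is surjective. The first is immediate: since $\mathfrak{g}_\sigma\subset\mathfrak{g}_\nu$ gives $\tau|\mathfrak{g}_\sigma=\sigma|\mathfrak{g}_\sigma$, a character of $G_{\nu\tau}$ whose differential at $e$ is $2\pi i\cdot\tau|\mathfrak{g}_{\nu\tau}$ restricts to a character of $G_\sigma$ whose differential is $2\pi i\cdot\sigma|\mathfrak{g}_\sigma$. Injectivity is almost as quick: if $\chi_1,\chi_2\in G_{\nu\tau}^\sharp$ agree on $G_\sigma$, the quotient $\chi_1\chi_2^{-1}$ has vanishing differential at $e$, so it is trivial on $G^0_{\nu\tau}$; combined with the factorization $G_{\nu\tau}=G_\sigma\cdot G^0_{\nu\tau}$ of~\eqref{eq.33}, this forces $\chi_1=\chi_2$ on all of $G_{\nu\tau}$.

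The substantive content is surjectivity. Given $\chi_0\in G_\sigma^\sharp$, I plan to construct the extension in two stages. In the first stage I produce a character $\chi^0$ on $G^0_{\nu\tau}$ with differential $2\pi i\cdot\tau|\mathfrak{g}_{\nu\tau}$ that coincides with $\chi_0$ on $G^0_\sigma$. To apply the integration construction~\eqref{eq.43}--\eqref{eq.44} one needs $\tau$ to vanish on $[\mathfrak{g}_{\nu\tau},\mathfrak{g}_{\nu\tau}]$; using $\mathfrak{g}_{\nu\tau}=\mathfrak{n}_\nu+\mathfrak{g}_\sigma$ from~\eqref{eq.31}, this reduces to $\nu([\mathfrak{n}_\nu,\mathfrak{n}_\nu])=0$, which follows from~\eqref{eq.19}, together with $\sigma([\mathfrak{g}_\sigma,\mathfrak{g}_\sigma])=0$ and $\sigma([\mathfrak{g}_\sigma,\mathfrak{n}_\nu])=0$, both from the definition~\eqref{eq.2} of $\mathfrak{g}_\sigma$. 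The resulting closed left-invariant 1-form $\theta$ on $G^0_{\nu\tau}$ restricts on $G^0_\sigma$ to the form $\theta_\sigma$ associated with $\chi_0|G^0_\sigma$, which is integral by hypothesis. By~\eqref{eq.30} the quotient $G^0_{\nu\tau}/G^0_\sigma\simeq\mathcal{A}_{\nu\tau}$ is a contractible affine space, so Lemma~\ref{le.1}(2) yields $\pi_1(G^0_\sigma)\simeq\pi_1(G^0_{\nu\tau})$ via the inclusion. Every loop in $G^0_{\nu\tau}$ is therefore homotopic to one in $G^0_\sigma$, and since $\theta$ is closed the integral over such a loop equals the integral of $\theta_\sigma$ over its homotoped form, which lies in $\mathbb{Z}$; hence $\theta$ is integral and integrates to $\chi^0$. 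Choosing the local primitives in~\eqref{eq.44} on $G^0_{\nu\tau}$ to extend those used to define $\chi_0$ arranges $\chi^0|G^0_\sigma=\chi_0|G^0_\sigma$.

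The second stage extends $\chi^0$ from $G^0_{\nu\tau}$ to all of $G_{\nu\tau}=G_\sigma\cdot G^0_{\nu\tau}$ by declaring $\chi(hg_0)=\chi_0(h)\chi^0(g_0)$ for $h\in G_\sigma$ and $g_0\in G^0_{\nu\tau}$. Well-definedness rests, via $G_\sigma\cap G^0_{\nu\tau}=G^0_\sigma$ from~\eqref{eq.33}, on the first-stage matching $\chi^0|G^0_\sigma=\chi_0|G^0_\sigma$; multiplicativity reduces to the $G_\sigma$-invariance of $\chi^0$ under conjugation, which holds because for $h\in G_\sigma$ the pulled-back character $g_0\mapsto\chi^0(hg_0h^{-1})$ on $G^0_{\nu\tau}$ has differential $\xi\mapsto 2\pi i\cdot\sigma(\operatorname{Ad}_h\xi)=2\pi i\cdot\sigma(\xi)$ (by $\operatorname{Ad}^*_h\sigma=\sigma$), and therefore coincides with $\chi^0$ on the connected group $G^0_{\nu\tau}$. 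By construction $\chi|G_\sigma=\chi_0$, completing surjectivity.

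The main obstacle is the integrality transfer in the first stage: the existence of $\chi_0$ on the possibly disconnected, non-simply-connected group $G_\sigma$ must be leveraged to control the cohomology class of $\theta$ on the larger group $G^0_{\nu\tau}$ while simultaneously fixing the constants of integration so that $\chi^0$ actually extends $\chi_0|G^0_\sigma$. The critical structural inputs are the factorization and intersection identities~\eqref{eq.30}--\eqref{eq.33} and, above all, the contractibility of the fibre $\mathcal{A}_{\nu\tau}$ supplied by Proposition~\ref{pr.6}, which makes the cohomological obstruction vanish.
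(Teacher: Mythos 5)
Your proof is correct and follows the same overall skeleton as the paper's: restriction lands in $G_\sigma^\sharp$ because $\tau|{\mathfrak g}_\sigma=\sigma|{\mathfrak g}_\sigma$, and surjectivity (with uniqueness of the extension) is obtained by passing first from $G^0_\sigma$ to $G^0_{\nu\tau}$ and then to $G_{\nu\tau}=G_\sigma\cdot G^0_{\nu\tau}$ via~\eqref{eq.31} and~\eqref{eq.33}, with the contractibility of ${\mathcal A}_{\nu\tau}\simeq G^0_{\nu\tau}/G^0_\sigma$ as the decisive input. The one place where you genuinely diverge is the first stage: the paper builds $\chi^0$ by an explicit \v{C}ech-style gluing, covering $G^0_\sigma\cdot U$ by translates $hU$ of a good neighborhood, defining local primitives $f_h=l_{h^{-1}}^*f_e+(1/2\pi i)\ln\psi(h)$, checking that the transition constants are integers because they can be evaluated on $h_1U\cap h_2U\cap G^0_\sigma$, and then propagating over the contractible quotient; you instead argue at the level of periods, using $\pi_1(G^0_\sigma)\simeq\pi_1(G^0_{\nu\tau})$ from Lemma~\ref{le.1} to homotope any loop into $G^0_\sigma$, where the closed form restricts to the integral form $\theta_\sigma$. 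Your route is shorter and makes the cohomological content transparent, at the cost of hiding the concrete transition functions the paper exhibits; note also that once $\theta_\tau$ is known to have integral periods the matching $\chi^0|G^0_\sigma=\chi_0|G^0_\sigma$ is automatic (a character of a connected group is determined by its differential), so your remark about choosing primitives is not actually needed. Your injectivity argument (a character trivial on $G_\sigma$ with zero differential is trivial on $G_\sigma\cdot G^0_{\nu\tau}$) is also a slight, and arguably cleaner, variant of the paper's appeal to the isomorphism of the component-group character groups. Finally, you verify $\tau([{\mathfrak g}_{\nu\tau},{\mathfrak g}_{\nu\tau}])=0$ by hand through the decomposition ${\mathfrak g}_{\nu\tau}={\mathfrak n}_\nu+{\mathfrak g}_\sigma$; this is correct but unnecessary, since $G_{\nu\tau}$ is the isotropy group of $\tau$ for the $G_\nu$-coadjoint action, so the vanishing is immediate from the definition of an isotropy algebra, as the paper notes in its discussion preceding~\eqref{eq.44}.
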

\begin{proof}
Note that $\tau=\sigma|{\mathfrak g}_\nu$ and
$G_\sigma\subset G_{\nu\tau}$. But
${\mathfrak g}_\sigma\subset{\mathfrak g}_{\nu\tau}$, thus
$\tau|{\mathfrak g}_{\sigma}=\sigma|{\mathfrak g}_\sigma$
and by the definition for any
$\chi\in G_{\nu\tau}^\sharp$ we have
$\chi|G_\sigma\in G_\sigma^\sharp$.
Therefore, in order to prove the proposition it is
sufficient to show that each character
$\psi\in G_\sigma^\sharp$ admits an extension to some
character $\chi\in G_{\nu\tau}^\sharp$. This extension is unique
because by~\eqref{eq.33} the groups
$G_{\nu\tau}/G^0_{\nu\tau}$ and
$G_\sigma/G^0_\sigma$ are isomorphic and, in particular,
$\pi^*_{G_{\nu\tau}/G^0_{\nu\tau}}\simeq \pi^*_{G_\sigma/G^0_\sigma}$.

Consider now a character
$\psi\in G_\sigma^\sharp$. Since
$G^0_\sigma$ is a closed subgroup of
$G^0_{\nu\tau}$, we can choose a contractible neighborhood
$U\subset G^0_{\nu\tau}$ of the unity such that all
intersections $U\cap hU$,
$h\in G^0_\sigma$, are also contractible and, in addition,
$U\cap hU\not=\emptyset$ if and only if
$U\cap hU\cap G^0_\sigma\not=\emptyset$ (there exists a local
cross section $S\subset G^0_{\nu\tau}$ such that the map
$(s,g)\mapsto sg$,
$S\times G^0_\sigma\to S G^0_\sigma\subset G^0_{\nu\tau}$
is a diffeomorphism). Let
$\theta_\tau$ be a
$G^0_{\nu\tau}$-invariant one-form on the Lie group
$G^0_{\nu\tau}$ such that
$\theta_\tau(e)=\tau|{\mathfrak g}_{\nu\tau}$.
Since the form $\theta_\tau$ is closed, there exists a function
$f_e: U\to{\mathbb R}$ such that
$df_e=\theta_\tau|U$, $f_e(e)=0$. Put
$f_h=l_{h^{-1}}^*f_e+(1/2\pi i)\ln\psi(h)$ for all
$h\in G^0_\sigma\setminus\{e\}$. Then
$df_h=\theta_\tau|hU$ because the one-form
$\theta_\tau$ is $G^0_\sigma$-invariant.
Thus the difference $f_{h_1}-f_{h_2}$ on the set
$h_1U\cap h_2U\not=\emptyset$ is a real constant.

On the other hand, the co-vector
$\sigma|{\mathfrak g}_\sigma$ determines the left
$G^0_\sigma$-invariant one-form $\theta_\sigma$ on the group
$G^0_\sigma$. By the definition,
$\theta_\sigma$ coincides with the restriction
$\theta_\tau|G^0_\sigma$ and
$\psi|G^0_\sigma$ is the character associated with this form
$\theta_\sigma\in H^1(G^0_\sigma,{\mathbb Z})$.
Therefore from~\eqref{eq.44} it follows that the difference
$f_{h_1}-f_{h_2}$ is an integer constant on some nonempty
subset $h_1U\cap h_2U\cap G^0_\sigma$ and, consequently, on the
whole open set $h_1U\cap h_2U$. In other words, the function
$\chi_e: G^0_\sigma\cdot U\to{\mathbb S}^1$ given by
$\chi_e|hU=\exp(2\pi i f_h)$ is a well defined extension of
the function $\psi|G^0_\sigma$ onto the open set
$G^0_\sigma\cdot U\supset G^0_\sigma$.

Put ${\mathbf U}=G^0_\sigma\cdot U$.
Considering the family of functions
$\{l_{g^{-1}}^*f_h\}$ (for which
$d(l_{g^{-1}}^*f_h)=\theta_\tau|ghU$), we obtain that
$l_{g^{-1}}^*\chi_e=s\chi_e$ on
$g{\mathbf U}\cap{\mathbf U}\not=\emptyset$, where
$s$ is some constant factor from ${\mathbb S}^1$.
But by~\eqref{eq.30} and~\eqref{eq.33} the space
$G^0_{\nu\tau}/G^0_\sigma\simeq{\mathcal A}_{\nu\tau}$
is contactable. Therefore there exists a character
$\chi^0$ on $G^0_{\nu\tau}$ which is an extension of
$\psi|G^0_\sigma$ and which is associated with the one-form
$\theta_\tau$. Moreover, $\chi^0(\tilde gg\tilde g^{-1})=\chi^0(g)$ for
any (fixed) $\tilde g\in G_{\nu\tau}$ and for all $g\in G^0_{\nu\tau}$.
Indeed, putting $F(g)=\chi^0(\tilde gg\tilde g^{-1})$ and
$a_{\tilde g}: g\mapsto \tilde gg\tilde g^{-1}$ on $G^0_{\nu\tau}$,
we obtain that
$$
\frac{1}{2\pi i}\cdot\frac{dF}{F}
=\frac{1}{2\pi i}\cdot a_{\tilde g}^*\biggl(\frac{d\chi^0}{\chi^0}\biggr)
=a_{\tilde g}^*\theta_\tau=\theta_\tau
=\frac{1}{2\pi i}\cdot \biggl(\frac{d\chi^0}{\chi^0}\biggr)
$$
because by~\eqref{eq.28}
$\langle\tau,\operatorname{Ad}_{\tilde g}\xi\rangle
=\langle\sigma,\operatorname{Ad}_{\tilde g}\xi\rangle=
\langle\tau,\xi\rangle$
for all $\xi\in{\mathfrak g}_{\nu\tau}\subset{\mathfrak g}_\nu$.
Since $F(e)=\chi^0(e)$, we have $F=\chi^0$.

Taking into account that
$G_{\nu\tau}=G_\sigma\cdot G^0_{\nu\tau}$,
$G^0_{\nu\tau}\cap G_\sigma=G^0_\sigma$ (see~\eqref{eq.31})
and $\psi=\chi^0|G^0_\sigma$
we obtain that the map $\chi:G_{\nu\tau}\to{\mathbb S}^1$,
$\chi(hg)=\psi(h)\chi^0(g)$, where
$h\in G_\sigma$ and $g\in G^0_{\nu\tau}$,
is well defined. This map determines a character on
$G_{\nu\tau}$ because
$\chi^0(hgh^{-1})=\chi^0(g)$
for all $h\in G_\sigma\subset G_{\nu\tau}$ and
$g\in G^0_{\nu\tau}$. Finally, $\chi$ belongs to the set
$G_{\nu\tau}^\sharp$ because $\chi|G^0_{\nu\tau}=\chi^0$.
\end{proof}

Remark that Proposition~\ref{pr.26} generalizes Rawnley's
Proposition~2 from~\cite{Ra}.

\begin{proposition}\label{pr.27}
Let $\sigma\in{\mathfrak g}^*$ and
$\nu=\sigma|{\mathfrak n}$. An integrality of the coadjoint
orbit ${\mathcal O}^\tau\subset {\mathfrak g}^*_\nu$
is a necessary condition for an integrality of the coadjoint orbit
${\mathcal O}^\sigma\subset{\mathfrak g}^*$.
In general, this condition is not sufficient for an integrality of
${\mathcal O}^\sigma$.
\end{proposition}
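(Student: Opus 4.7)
The plan splits the proposition into two parts: necessity, which is a short topological consequence of Proposition~\ref{pr.19}, and non-sufficiency, which requires an explicit counterexample exploiting the distinction between a group and its universal cover.

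For necessity, I would transfer the integrality of the Kirillov--Kostant--Souriau class along the fiber bundle from Proposition~\ref{pr.19}. Suppose $[\omega]\in H^2(\mathcal O^\sigma,\mathbb Z)$. Proposition~\ref{pr.19} gives a smooth fiber bundle $p_2\colon \mathcal O^\sigma(G_\nu^\bullet)\to\mathcal O^\tau$ with contractible fiber $\mathcal A_{\nu\tau}$ satisfying $p_2^*\omega'=\omega|_{\mathcal O^\sigma(G_\nu^\bullet)}$, where $\omega'$ is the KKS form on $\mathcal O^\tau$. The inclusion $i\colon \mathcal O^\sigma(G_\nu^\bullet)\hookrightarrow\mathcal O^\sigma$ yields $i^*[\omega]\in H^2(\mathcal O^\sigma(G_\nu^\bullet),\mathbb Z)$, and by Proposition~\ref{pr.19} this class equals $p_2^*[\omega']$. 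A fiber bundle with contractible fibers is a homotopy equivalence, so $p_2^*\colon H^2(\mathcal O^\tau,\mathbb Z)\to H^2(\mathcal O^\sigma(G_\nu^\bullet),\mathbb Z)$ is an isomorphism, forcing $[\omega']$ to be integral. Hence $\mathcal O^\tau$ is integral.

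For non-sufficiency, I would produce a concrete triple $(\mathfrak g,\mathfrak n,\sigma)$ in which $\mathcal O^\tau$ is integral but $\mathcal O^\sigma$ is not. The underlying obstruction is topological: by Theorem~\ref{th.24}, integrality of $\mathcal O^\sigma$ is equivalent to $\widetilde G_\sigma^\sharp\neq\emptyset$ with $\widetilde G_\sigma=\tilde p^{-1}(G_\sigma)\subset\widetilde G$, whereas integrality of $\mathcal O^\tau$ is equivalent to the analogous condition for $(\widetilde{G_\nu})_\tau\subset\widetilde{G_\nu}$. Proposition~\ref{pr.26} identifies $G_{\nu\tau}^\sharp$ with $G_\sigma^\sharp$, and applied inside $\widetilde G$ it identifies $\tilde p^{-1}(G_{\nu\tau})^\sharp$ with $\widetilde G_\sigma^\sharp$; but $\tilde p^{-1}(G_{\nu\tau})$ and $\tilde p_\nu^{-1}(G_{\nu\tau})$ are generally inequivalent coverings of $G_{\nu\tau}$, so a character of the correct derivative on one need not exist on the other. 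Following the mechanism of Example~\ref{ex.25}, the plan is to embed the $SU(2)/SO(3)$ discrepancy inside a Lie algebra with a proper ideal and a covector $\nu$ of ``half-integer weight'' type, so that the reduced orbit admits the required character on $(\widetilde{G_\nu})_\tau$, but the corresponding character on $\widetilde G_\sigma$ fails to exist because of $\pi_1(G)$.

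The main obstacle is this second part: one must choose $\mathfrak g$, $\mathfrak n$ and $\sigma$ so that the coverings of $G_{\nu\tau}$ induced by $\widetilde G\to G$ and by $\widetilde{G_\nu}\to G_\nu$ interact with the prescribed derivative asymmetrically, and then verify by direct Kostant computation that exactly one of the two character sets is empty. The necessity direction, by contrast, reduces to the one-line observation that $p_2^*$ is an isomorphism on integer cohomology.
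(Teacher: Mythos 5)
Your necessity argument is correct and is essentially the paper's own first argument: restrict $\omega$ to ${\mathcal O}^\sigma(G^\bullet_\nu)$, use that $p_2$ is a locally trivial fibration with contractible fibre ${\mathcal A}_{\nu\tau}$ so that $p_2^*$ is an isomorphism on $H^2(\cdot,{\mathbb Z})$, and conclude from $p_2^*(\omega')=\omega|{\mathcal O}^\sigma(G^\bullet_\nu)$ that $\omega'$ is integral. (The paper additionally records a second, character-theoretic proof of necessity via Proposition~\ref{pr.26} and Theorem~\ref{th.24}, but your single argument suffices.) Your diagnosis of the non-sufficiency mechanism is also the right one: with $G$ taken simply connected, integrality of ${\mathcal O}^\sigma$ is governed by $G_{\nu\tau}^\sharp$ (via Proposition~\ref{pr.26}), whereas integrality of ${\mathcal O}^\tau$ is governed by $(\widetilde{G^0_\nu})^\sharp_\tau$, and these test the same infinitesimal character $\tau$ on two generally inequivalent coverings.

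The genuine gap is that the second assertion of the proposition \emph{is} the existence of a counterexample, and your proposal stops at announcing the plan to build one; nothing is actually exhibited or verified. The paper fills this in Subsection~\ref{ss.2.7} together with Examples~\ref{ex.25} and~\ref{ex.29}, and each ingredient there requires a concrete check that your outline leaves open. Specifically: one needs a simply connected $K$ and a representation $\delta$ whose generic-enough isotropy group $K_\nu$ is connected but \emph{not} simply connected --- the paper takes $K=SU(3)$ acting on $V=gl(3,{\mathbb C})$ by $\delta(k)v=kvk^t$, with $\nu=E$ under the trace pairing, so that $K_\nu=SO(3)$ with universal cover $SU(2)$. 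One then needs the covector: Example~\ref{ex.25} produces $\varphi=\operatorname{diag}(ib,-ib)$ with $2\pi b$ an odd integer, for which the character equation~\eqref{eq.43} is solvable on $\widetilde H_\varphi\subset SU(2)$ but the resulting character takes the value $-1$ on $-E\in\ker\tilde p$, so no character exists on $H_\varphi\subset SO(3)$. Finally, one must transport this from $K_\nu$ to $G_\nu=K_\nu\ltimes_\delta V$: the paper establishes the bijections $G_{\nu\tau}^\sharp\leftrightarrow K_{\nu\varphi}^\sharp$ and $(\widetilde{G_\nu})_\tau^\sharp\leftrightarrow(\widetilde{K_\nu})_\varphi^\sharp$ by the explicit formula $\chi(k,v)=\psi(k)\exp(2\pi i\langle\nu,v\rangle)$, which is a character precisely because $k^*\cdot\nu=\nu$ on $K_{\nu\varphi}$. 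Without these three steps --- the choice of $(K,\delta,\nu)$ forcing $K_\nu=SO(3)$, the half-integral $\varphi$, and the semidirect-product character bijection --- the claim of non-sufficiency is not proved. (Incidentally, carrying this out is what exposes the error in Rawnsley's Corollary to Proposition~2 noted in Remark~\ref{re.30}, namely the application of Kostant's theorem to the non--simply-connected group $K_\nu$.)
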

\begin{proof}
If the form $\omega$ on
${\mathcal O}^\sigma={\mathcal O}^\sigma(G)$
is integral, then its restriction
$\omega|{\mathcal O}^\sigma(G^\bullet_\nu)$
to the submanifold
${\mathcal O}^\sigma(G^\bullet_\nu)\subset{\mathcal O}^\sigma(G)$
is also integral. Since by Proposition~\ref{pr.19} the map
$p_2:{\mathcal O}^\sigma(G^\bullet_\nu)\to {\mathcal O}^\tau$
is a locally trivial fibering with a contractible fibre, the
affine space ${\mathcal A}_{\nu\tau}$, the map
$p_2^*:\Lambda^2({\mathcal O}^\tau)\to \Lambda^2({\mathcal
O}^\sigma(G^*_\nu))$
induces an isomorphism
$H^2({\mathcal O}^\tau,{\mathbb Z})\to H^2({\mathcal
O}^\sigma(G^\bullet_\nu),{\mathbb Z})$.
Since by Proposition~\ref{pr.19}
$p_2^*(\omega')=\omega|{\mathcal O}^\sigma(G^\bullet_\nu)$,
the canonical symplectic form
$\omega'$ on ${\mathcal O}^\tau$
is integral and we obtain the first assertion of the
proposition.

Remark also that the first assertion of the proposition
follows also from Proposition~\ref{pr.26}. Indeed, we can
assume without restricting the generality that
$G$ is a connected and simply connected Lie group with the
Lie algebra
${\mathfrak g}$. By Theorem~\ref{th.24}, the character set
$G_\sigma^\sharp$ is not empty. By Proposition~\ref{pr.26},
$G^\sharp_{\nu\tau}\not=\emptyset$. Let
$\widetilde{G^0_\nu}$ be the universal covering group of the
connected group $G^0_\nu$ (with the Lie algebra
${\mathfrak g}_\nu$). By Theorem~\ref{th.24} the coadjoint orbit
${\mathcal O}^\tau$ is integral if and only if
$(\widetilde{G^0_\nu})_\tau^\sharp\not=\emptyset$.
However, the covering homomorphism
$\widetilde{G^0_\nu}\to G^0_\nu$ induces the homomorphism
$(\widetilde{G^0_\nu})_\tau\to (G^0_\nu)_\tau$
and, consequently,
$(\widetilde{G^0_\nu})_\tau^\sharp\not=\emptyset$ if
$(G^0_\nu)_\tau^\sharp\not=\emptyset$. Therefore
$(\widetilde{G^0_\nu})_\tau^\sharp\not=\emptyset$, because
$G^\sharp_{\nu\tau}\not=\emptyset$ and
$(G^0_\nu)_\tau$ is an open subgroup of
$G_{\nu\tau}$.

The second assertion of the proposition will be proven in
the next subsection showing that the converse is not
necessarily true. More precisely, we will construct the Lie algebra
${\mathfrak g}$ which is a semi-direct product of some Lie
subalgebra
${\mathfrak k}\subset{\mathfrak g}$ and the Abelian ideal
${\mathfrak n}$ and choose two coadjoint orbits
${\mathcal O}^\tau\subset{\mathfrak g}^*_\nu$ and
${\mathcal O}^\sigma\subset{\mathfrak g}^*$
which are not integral simultaneously while
$\tau=\sigma|{\mathfrak g}_\nu$.
\end{proof}

\begin{remark}\label{re.28}
All connected components of the reduced-group orbit
${\mathcal O}^\tau(G_\nu)$ are coadjoint orbits of the Lie
algebras ${\mathfrak g}_\nu$ and
${\mathfrak b}_\nu$ (under the identification of
${\mathfrak b}_\nu^*$ with
$({\mathfrak n}^\natural_\nu)^{\bot_\nu}\subset{\mathfrak g}_\nu^*$,
see~\eqref{eq.34} and Lemma~\ref{le.2}). These orbits
are simultaneously either integral or non-integral.
\end{remark}

\subsection{Split extensions using Abelian algebras (semidirect products)}
\label{ss.2.7}

In this subsection we will finish the proof of
Proposition~\ref{pr.27}. To this end we construct a
connected and simply connected Lie group
$G$ and construct some coadjoint orbit
${\mathcal O}^\sigma(G)$ in
${\mathfrak g}^*$ such that the set
$(G^0_\nu)_\tau^\sharp$ is empty while the coadjoint orbit
${\mathcal O}^\tau={\mathcal O}^\tau(G_\nu^0)$
is integral. Then by Proposition~\ref{pr.26} the set
$(G_\sigma)^\sharp$ is also empty, i.e. the orbit
${\mathcal O}^\sigma(G)$ is not integral.

Let $K$ be a connected and simply connected Lie group with the
Lie algebra ${\mathfrak k}$, and for $k$ in $K$ and
$f$ in the dual ${\mathfrak k}^*$ of ${\mathfrak k}$, let
$\operatorname{Ad}_k^*f$ denote the coadjoint action of
$k$ on $f$. If $\delta$ is a representation of
$K$ on a real, finite-dimensional space $V$, let
$d\delta$ be the corresponding tangent representation of
${\mathfrak k}$.

We can form the semi-direct product
$G=K\ltimes_\delta V$ using the representation
$\delta$ and identifying
$V$ with its group of translations. Then the Lie group
$G$ can be taken as $K\times V$ with multiplication
$(k_1,v_1)(k_2,v_2)=(k_1k_2,v_1+k_1\cdot v_2)$ for
$k_j\in K$, $v_j\in V$ and the algebra
${\mathfrak g}={\mathfrak k}\ltimes_{d\delta}V$ of
$G$ can be taken as
${\mathfrak k}\dotplus V$ with the Lie bracket
$$
[(\zeta_1,y_1),(\zeta_2,y_2)]
=([\zeta_1,\zeta_2],\zeta_1\cdot y_2-\zeta_2\cdot y_1)
$$
for $\zeta_j$ in ${\mathfrak k}$ and $y_j$ in
$V$. Here $k_j\cdot v_j=\delta(k_j)(v_j)$ and
$\zeta_j\cdot y_j=d\delta(\zeta_j)(y_j)$. Since
$(k,v)^{-1}=(k^{-1},-k^{-1}\cdot v)$, the adjoint action of
$G$ on ${\mathfrak g}$ is given by
\begin{equation}\label{eq.45}
\operatorname{Ad}_{(k,v)}(\zeta,y)
=(\operatorname{Ad}_k\zeta,
k\cdot y-(\operatorname{Ad}_k\zeta)\cdot v).
\end{equation}
The dual ${\mathfrak g}^*$ of
${\mathfrak g}$ can be identified with
${\mathfrak k}^*\times V^*$ and the coadjoint action of
$G$ on ${\mathfrak g}^*$ is given by
\begin{equation}\label{eq.46}
\langle\operatorname{Ad}^*_{(k,v)}(f',\nu'),(\zeta,y)\rangle=
\langle\operatorname{Ad}^*_k f',\zeta\rangle
-\langle \nu',(\operatorname{Ad}_k\zeta)\cdot v\rangle
+\langle k^*\cdot \nu',y \rangle,
\end{equation}
where $f'$ is in ${\mathfrak k}^*$ and $\nu'$ in
$V^*$; also, by the definition,
$\langle k^*\cdot \nu',y \rangle=\langle \nu',k\cdot y\rangle$.
Note that all above formulas for semidirect products are
standard up to notation (see for example,~\cite[\S2]{Ra}
or~\cite[\S2]{Ba}).

The subgroup
$N=\{(e,v)\in G, v\in V\}$ is a normal commutative subgroup
of $G$ with the Lie algebra
${\mathfrak n}=\{(0,y), y\in V\}$. The
$\operatorname{Ad}$-action~\eqref{eq.45} of $G$ on
${\mathfrak n}$ induces the action of $G$ on
${\mathfrak n}^*$:
$(k,v)\cdot\nu=k^*\cdot \nu$. Therefore for
$\nu\in{\mathfrak n}^*=V^*$
$$
G_\nu=\{(k,v)\in K\ltimes_\delta V,\ k^*\cdot \nu=\nu\}
\quad\text{and}\quad
{\mathfrak g}_\nu=\{(\zeta,y)\in {\mathfrak k}\ltimes_{d\delta} V,\
\zeta^*\cdot\nu=0\},
$$
that is $G_\nu=K_\nu\ltimes_\delta V$ and
${\mathfrak g}_\nu={\mathfrak k}_\nu\ltimes_{d\delta} V$,
where $K_\nu$ is the isotropy group of
$\nu\in V^*$ with Lie algebra
${\mathfrak k}_\nu=\{\zeta\in{\mathfrak k}: \zeta^*\cdot\nu=0\}$.
It is easy to verify using~\eqref{eq.46} that
$G_\nu$ is the stabilizer of the affine subspace
${\mathcal A}_\nu=\{(f,\nu), f\in{\mathfrak k}^*\}$.

Putting $\sigma=(f,\nu)$ and
$\tau=\sigma|{\mathfrak g}_\nu$, we obtain that
$\tau=(\varphi,\nu)$, where
$\varphi=f|{\mathfrak k}_\nu$. By definition~\eqref{eq.28},
the Lie group
\begin{equation}\label{eq.47}
\begin{split}
G_{\nu\tau}
&=\{(k,v)\in K_\nu\ltimes_\delta V:
\bigl(\operatorname{Ad}^*_{(k,v)}(f,\nu)\bigr)|{\mathfrak g}_\nu
=(f,\nu)|{\mathfrak g}_\nu\}\\
&=\{(k,v)\in K_\nu\ltimes_\delta V:
(\operatorname{Ad}_k^*f)|{\mathfrak k}_\nu=f|{\mathfrak k}_\nu\},
\end{split}
\end{equation}
because $k^*\cdot\nu=\nu$, $\zeta^*\cdot\nu=0$
and $\operatorname{Ad}_k\zeta\in{\mathfrak k}_\nu$
for all $k\in K_\nu$, $\zeta\in{\mathfrak k}_\nu$.
In particular, $\zeta\cdot v\in\ker\nu$ if $v\in\ker\nu$.
In other words,
$G_{\nu\tau}=K_{\nu\varphi}\ltimes_\delta V$, where
\begin{equation}\label{eq.48}
K_{\nu\varphi}=\{k\in K_\nu: \langle\varphi,
\operatorname{Ad}_k\zeta\rangle
=\langle\varphi,\zeta\rangle,\ \forall\zeta\in{\mathfrak k}_\nu\}.
\end{equation}

Suppose now that the group
$K_\nu$ is connected. Let $\widetilde{K_\nu}$
be its universal covering group with the covering homomorphism
$\tilde p_\nu:\widetilde{K_\nu}\to K_\nu$. Then
$\widetilde{G_\nu}=\widetilde{K_\nu}\ltimes_{\tilde\delta}V$
is the universal covering group of
$G_\nu$, where the semi-direct product if determined by the
representation
$\tilde\delta=\delta\circ\tilde p_\nu$. Since the group
$G_\nu$ is connected, the coadjoint orbit
${\mathcal O}^\tau\subset{\mathfrak g}^*_\nu$ is the orbit
${\mathcal O}^\tau(G_\nu)\simeq G_\nu/G_{\nu\tau}$.
But this orbit is also an orbit of
$\widetilde{G_\nu}$, that is
${\mathcal O}^\tau\simeq\widetilde{G_\nu}/(\widetilde{G_\nu})_\tau$.
It is easy to verify using expressions similar
to~\eqref{eq.46} and~\eqref{eq.47} that
$(\widetilde{G_\nu})_\tau =({\widetilde{K_\nu})_\varphi}
\ltimes_{\tilde\delta}V$, where
$(\widetilde{K_\nu})_\varphi=\tilde p^{-1}_\nu(K_{\nu\varphi})$.

Now we will establish bijections between the sets
$(\widetilde{G_\nu})_\tau^\sharp$ and
$(\widetilde{K_\nu})_\varphi^\sharp$,
$G_{\nu\tau}^\sharp$ and $K_{\nu\varphi}^\sharp$
using Rawnsley's formula~\cite[Eq.(2)]{Ra}. Indeed, for any character
$\psi\in K_{\nu\varphi}^\sharp$ the function
$\chi(k,v)=\psi(k)\exp(2\pi i\langle \nu,v\rangle)$
on the group $G_{\nu\tau}=K_{\nu\varphi}\ltimes_\delta V$
is a character because
$k^*\cdot\nu=\nu$. By~\eqref{eq.43} this character
$\chi$ is a unique extension of $\psi$ such that
$\chi\in G_{\nu\tau}^\sharp$. Thus there is a bijection
between $G_{\nu\tau}^\sharp$ and $K_{\nu\varphi}^\sharp$.
Using similar arguments one establishes a bijection between
$(\widetilde{G_\nu})_\tau^\sharp$ and
$(\widetilde{K_\nu})_\varphi^\sharp$ because
$(\widetilde{G_\nu})_\tau
=({\widetilde{K_\nu})_\varphi}\ltimes_{\tilde\delta}V$.
By Proposition~\ref{pr.26} and Theorem~\ref{th.24}, the orbit
${\mathcal O}^\tau(G_\nu)$ is integral and the orbit
${\mathcal O}^\sigma(G)$ is not integral if and only if
$(\widetilde{G_\nu})_\tau^\sharp\not=\emptyset$
and $G_{\nu\tau}^\sharp=\emptyset$ or, equivalently,
$(\widetilde{K_\nu})_\varphi^\sharp\not=\emptyset$
and $K_{\nu\varphi}^\sharp=\emptyset$.
Remark also that the coadjoint orbit
${\mathcal O}^\varphi$ in
${\mathfrak k}_\nu^*$ passing through the point
$\varphi$ is isomorphic to the homogeneous spaces
$K_\nu/K_{\nu\varphi}$ and
$\widetilde{K_\nu}/(\widetilde{K_\nu})_\varphi$
simultaneously.

\begin{example}\label{ex.29}
Now we consider a connected and simply connected algebraic Lie group
$K=SU(3)$ and its representation
$\delta: SU(3)\to \operatorname{End}(gl(3,{\mathbb C}))$,
$\delta(k)(v)=kvk^t$, in the space $V$
of all complex matrices of order three (considered as a real
space). Here $k^t$ denotes the transpose of a matrix
$k\in SU(3)$. Using the nondegenerate 2-form
$\langle v_1,v_2\rangle=\operatorname{Re}\operatorname{Tr} v_1v_2$
on $V$ we identify the space $V$ with dual
$V^*$. Under this identification the dual representation
$\delta^*$ is given by
$\delta^*(k)(v)=k^tvk$. It is clear that for the covector
$\nu=E$, where
$E$ is the identity matrix, the isotropy group
$H=K_\nu$ is the group $SO(3)=SO(3,{\mathbb C})\cap SU(3)$.
Its universal covering group
$\widetilde H=\widetilde{K_\nu}$ is isomorphic to
$SU(2)$. But as we showed above (see Example~\ref{ex.25})
there is an element
$\varphi\in{\mathfrak h}^*={\mathfrak k}_\nu^*$ such that
$\widetilde H_\varphi^\sharp=
(\widetilde{K_\nu})_\varphi^\sharp\not=\emptyset$ while
$H_\varphi^\sharp=K_{\nu\varphi}^\sharp=\emptyset$.
Thus, as we proved above,
$(\widetilde{G_\nu})_\tau^\sharp\not=\emptyset$ while
$G_{\nu\tau}^\sharp=\emptyset$,
that is the condition of Proposition~\ref{pr.27} is not
sufficient.
\end{example}

\begin{remark}\label{re.30}
The Rawnsley's assertion~\cite[Corollary to Prop.2]{Ra}
claims that an arbitrary coadjoint orbit
${\mathcal O}^\sigma$ in the dual space
${\mathfrak g}^*$ of the semidirect product
${\mathfrak g}$ is integral if and only if the coadjoint
orbit ${\mathcal O}^\varphi\simeq K_\nu/K_{\nu\varphi}$ in
${\mathfrak k}_\nu^*$ is integral. From
Example~\ref{ex.29} it follows that in general this assertion is not
true. The gap in the proof of this assertion~\cite[Corollary
to Prop.2]{Ra} consists in an illegal using of Kostant's
theorem~\ref{th.24} (with the not necessary simply connected group
$H=K_\nu$).
\end{remark}

\end{document}